\author {Robert Par\'e}
\address{Department of Mathematics and Statistics, Dalhousie University\\
 Halifax, NS, Canada, B3H 4R2\\}
\title[Retrocells] {Retrocells}
\keywords{Double category, companion, retrocell, cofunctor, closed bicategory} 
 \newcommand{\slashdot}{\mathchoice
                                                              {\mathop{\slash\llap{$\scriptstyle{\bullet\,}$}}}
                                                              {\mathop{\slash\llap{$\scriptstyle{\bullet\,}$}}}
                                                              {\mathop{\scriptstyle{\slash\llap{$\scriptscriptstyle{\bullet\,}$}}}}
                                                              {\mathop{\scriptstyle{\slash\llap{$\scriptscriptstyle{\bullet\,}$}}}}
                                                              }
\newcommand{\bsd}{\mathchoice
                                                              {\mathop{\backslash\llap{$\scriptstyle{\bullet}$}}}
                                                              {\mathop{\backslash\llap{$\scriptstyle{\bullet}$}}}
                                                              {\mathop{\scriptstyle{\backslash\llap{$\scriptscriptstyle{\bullet}$}}}}
                                                              {\mathop{\scriptstyle{\backslash\llap{$\scriptscriptstyle{\bullet}$}}}}
                                                              }
\newcommand{\bdot}{\mathchoice
                                                   {\mathop{\scriptstyle{\bullet}}} 
                                                   {\mathop{\scriptstyle{\bullet}}}
                                                   {\mathop{\scriptscriptstyle{\bullet}}}
                                                   {\mathop{\scriptscriptstyle{\bullet}}}
                                                   }
\def\CCat{{\mathbb C}{\rm at}}
\def\tod{\xy\morphism(0,0)|m|<225,0>[`;\hspace{-1mm}\bullet\hspace{-1mm}]\endxy}
\def\tosl{\xy\morphism(0,0)|m|<225,0>[`;\hspace{-1mm}+\hspace{-1mm}]\endxy}
\newcommand{\ols}[1]{\mskip.5\thinmuskip\overline{\mskip-.5\thinmuskip {#1} \mskip-.5\thinmuskip}\mskip.5\thinmuskip} 
\newcommand{\ov}[1]{{\ols{#1}}}
\newcommand{\ovv}[1]{{\widetilde{#1}}}
\newcommand{\todd}[2]{\xymatrix@1{\ar[r]|\bb^{#1}_{#2}&}}
\newcommand{\todo}[1]{\xymatrix@1{\ar[r]|\bb^{#1}&}}
\newtheorem{definition}{Definition}
\newtheorem{theorem}{Theorem}[section]
\newtheorem{proposition}{Proposition}
\newtheorem{corollary}{Corollary}
\newtheorem{example}{Example}
\def\CCat{{\mathbb C}{\rm at}}
\def\Cat{{\cal C}{\it at}}
\def\Veq{{\mbox{\rule{.23mm}{2.3mm}\hspace{.6mm}\rule{.23mm}{2.3mm}}}}
\def\veq{{\mbox{\rule{.2mm}{2mm}\hspace{.5mm}\rule{.2mm}{2mm}}}}
\newbox\bbox
\def\bb{\usebox{\bbox}}
\begin{document}


\maketitle

\begin{abstract}
The notion of retrocell in a double category with companions
is introduced and its basic properties established. Explicit descriptions
in some of the usual double categories are given. Monads in a
double category provide an important example where retrocells
arise naturally. Cofunctors appear as a special case. The motivating
example of vertically closed double categories is treated in
some detail.
\end{abstract}


\tableofcontents


\section*{Introduction}

In \cite{Par21} an in-depth study of the double category $ {\mathbb R}{\rm ing} $ of
rings, homomorphisms, bimodules and linear maps was made, and several
interesting features were uncovered. It became apparent that considering this
double category, rather than the category of rings and homomorphisms or the
bicategory of bimodules, could provide some important insights into the nature
of rings and modules.

An important property of the bicategory of bimodules is that it is biclosed,
i.e. the $ \otimes $ has right adjoints in each variable so that we have bijections
of linear maps
\begin{center}
\begin{tabular}{c} 
 $M \to N \obslash_T P $ \\[3pt]  \hline \\[-12pt]
 $N \otimes_S M \to P$  \\[3pt] \hline \\[-12pt]
$ N \to P \oslash_R M $ 
\end{tabular}
\end{center}
for bimodules
$$
\bfig\scalefactor{.8}
\Ctriangle/@{<-}|{\bb}`@{>}|{\bb}`@{>}|{\bb}/<400,300>[S`R`T\rlap{\ .};M`N`P]

\efig
$$
We use (a slight modification of Lambek's notation for the hom bimodules
\cite{Lam66}): $ P \oslash_R M $ is the $ T\mbox{-}S $ bimodule of
$ R $-linear maps $ M \to P $, and $ T $-linear for $ N \obslash_T P $.
Both $ P \oslash_R M $ and $ N \obslash_T P $ are covariant in $ P $
but contravariant in the other variables. This is for $ 2 $-cells in the
bicategory $ {\cal{B}}{\it im} $ but it does not extend to cells in the double
category $ {\mathbb R}{\rm ing} $, which casts a shadow on our
contention that $ {\mathbb R}{\rm ing} $ works better than $ {\cal{B}}{\it im} $.

The way out of this dilemma is hinted at in the commuter cells of \cite{GraPar08}
(there called commutative cells) introduced to deal with the universal
property of internal comma objects. That is, to use companions to define
new cells, which we call retrocells below, and thus recover functoriality.

After a quick review of companions in Section 1, we introduce retrocells
in Section 2 and see that they are the cells of a new double category,
and if we apply this construction twice, we get the original double category, up to
isomorphism.

Section 3 extends the mates calculus to double categories where
we see retrocells appearing as the mates of standard cells. A careful
study of dualities in Section 4 completes this.

Retrocells in the standard double categories whose vertical arrows
are spans, relations, profunctors or $ {\bf V} $-matrices are
analyzed in Section 5. They correspond to various sorts of liftings
reminiscent of fibrations.

Section 6 studies retrocells in the context of monads in a double
category. It is seen that, while Kleisli objects are certain kinds of
universal cells, Eilenberg-Moore objects are universal retrocells.
In $ {\mathbb S}{\rm pan} {\bf A} $, monads are
category objects in $ {\bf A} $ and internal functors are cells
preserving identities and multiplication. Retrocells, on the other hand,
give cofunctors. 

In Section 7 we extend Shulman's closed equipments to general
double categories, and establish the functoriality of internal homs,
covariant in one variable and retrovariant in the other, formulated
in terms of ``twisted cospans''.

We end in Section 8 by re-examining commuter cells in the light
of retrocells and see that this leads to an interesting triple category,
though we do not pursue the triple category aspect.

The results of this paper were presented in preliminary form at CT2019
in Edinburgh and in the MIT Categories Seminar in October 2020. We
thank Bryce Clarke, Matt Di~Meglio and David Spivak for expressing their
interest in retrocells. We also thank the anonymous referee for a careful
reading and numerous suggestions resulting in a much better presentation.

\section{Companions}

The whole paper will be concerned with double categories that have
companions, so we recall the definition, principal properties we will use,
and establish some notation (see \cite{GraPar04} for more details).

\begin{definition} Let $ f \colon A \to B $ be a horizontal arrow in
a double category $ {\mathbb A} $. A {\em companion} for $ f $ is
a vertical arrow $ v \colon A \tod B $ together with two
{\em binding cells} $ \alpha $ and $ \beta $ as below, such that
$$
\bfig\scalefactor{.8}
\square/=`=`@{>}|{\bb}`>/[A`A`A`B;``v`f]

\place(250,250)[{\scriptstyle \alpha}]

\square(500,0)/>``=`=/[A`B`B`B;f```]

\place(750,250)[{\scriptstyle \beta}]

\place(1300,250)[=\ \  \id_f]

\place(1700,250)[\mbox{and}]

\square(2100,-250)/`@{>}|{\bb}`=`=/[A`B`B`B\rlap{\ .};`v``]

\place(2350,0)[{\scriptstyle \beta}]

\square(2100,250)/=`=`@{>}|{\bb}`>/[A`A`A`B;``v`f]

\place(2350,500)[{\scriptstyle \alpha}]

\place(3000,250)[= \ \ 1_v]

\efig
$$

\end{definition}

We can always assume the vertical identities are strict and usually
denote them by long equal signs in diagrams, as we just did. Of course
horizontal identities are always strict, and we use a similar 
diagrammatic notation.

The vertical identity on $ A $, $ \id_A $, is a companion to the
horizontal identity $ 1_A $, with both binding cells the common value
$ 1_{\id_A} = \id_{1_A} $,
$$
\bfig\scalefactor{.8}
\square/=`=`=`=/[A`A`A`A\rlap{\ .};```]

\place(250,250)[{\scriptstyle 1}]

\efig
$$
If $ f \colon A \to B $ and $ g \colon B \to C $ have respective companions
$ (v, \alpha, \beta) $ and $ (w, \gamma, \delta) $ then $ g f $ has
$ w \bdot v $ as companion with binding cells
$$
\bfig\scalefactor{.8}
\square/`=`=`>/[A`B`A`B;```f]

\place(250,250)[{\scriptstyle \id_f}]

\square(500,0)/=``@{>}|{\bb}`>/[B`B`B`C;``w`g]

\place(750,250)[{\scriptstyle \gamma}]

\square(0,500)/=`=`@{>}|{\bb}`>/[A`A`A`B;``v`f]

\place(250,750)[{\scriptstyle \alpha}]

\square(500,500)/=``@{>}|{\bb}`/[A`A`B`B;``v`]

\place(750,750)[{\scriptstyle 1_v}]

\place(1450,500)[\mbox{and}]

\square(1900,0)/=`@{>}|{\bb}``=/[B`B`C`C;`w``]

\place(2150,250)[{\scriptstyle 1_w}]

\square(2400,0)/>`@{>}|{\bb}`=`=/[B`C`C`C\rlap{\ .};g`w``]

\place(2650,250)[{\scriptstyle \delta}]

\square(1900,500)/>`@{>}|{\bb}`=`/[A`B`B`B;f`v``]

\place(2150,750)[{\scriptstyle \beta}]

\square(2400,500)/>``=`/[B`C`B`C;g```]

\place(2650,750)[{\scriptstyle \id_g}]

\efig
$$

Two companions $ (v, \alpha, \beta) $ and $ (v', \alpha', \beta') $ for
the same $ f $ are isomorphic by the globular isomorphism
$$
\bfig\scalefactor{.8}
\square/`@{>}|{\bb}`=`=/[A`B`B`B\rlap{\ .};f`v``]

\place(250,250)[{\scriptstyle \beta}]

\square(0,500)/=`=`@{>}|{\bb}`>/[A`A`A`B;``v'`]

\place(250,750)[{\scriptstyle \alpha'}]

\efig
$$

We usually choose a representative companion from each isomorphism
class and call it $ (f_*, \psi_f, \chi_f) $
$$
\bfig\scalefactor{.8}
\square/=`=`@{>}|{\bb}`>/[A`A`A`B;``f_*`f]

\place(250,250)[{\scriptstyle \psi_f}]

\square(1200,0)/>`@{>}|{\bb}`=`=/[A`B`B`B\rlap{\ .};f`f_*``]

\place(1450,250)[{\scriptstyle \chi_f}]

\efig
$$
The choice is arbitrary but it simplifies things if we choose the companion
of $ 1_A $ to be $ (\id_A, 1_{\id_A}, 1_{\id_A}) $. In all of our examples
there is a canonical choice and for that $ (1_A)_* = \id_A $.

To lighten the notation, we often write the binding cells $ \psi_f $ and
$ \chi_f $ as corner brackets in diagrams:
$$
\bfig\scalefactor{.8}
\square/=`=`@{>}|{\bb}`>/[A`A`A`B;``f_*`f]

\place(250,200)[\ulcorner]

\place(850,250)[\mbox{and}]

\square(1200,0)/>`@{>}|{\bb}`=`=/[A`B`B`B\rlap{\ .};f`f_*``]

\place(1450,250)[\lrcorner]

\efig
$$
We also use $ = $ and $ \Veq $ for horizontal and vertical identity cells.

There is a useful technique, called {\em sliding}, where we slide a
horizontal arrow around a corner into a vertical one. Specifically, there
are bijections natural in every way that makes sense,
$$
\bfig\scalefactor{.8}
\square/`@{>}|{\bb}`@{>}|{\bb}`>/<1000,500>[A`C`D`E;`v`w`h]

\morphism(0,500)|a|/>/<500,0>[A`B;f]

\morphism(500,500)|a|/>/<500,0>[B`C;g]

\place(500,250)[{\scriptstyle \alpha}]

\place(1500,250)[\longleftrightarrow]

\square(2000,-150)/>`@{>}|{\bb}``>/<500,800>[A`B`D`E;f`v``h]

\morphism(2500,650)|r|/@{>}|{\bb}/<0,-400>[B`C;g_*]

\morphism(2500,250)|r|/@{>}|{\bb}/<0,-400>[C`E;w]

\place(2250,250)[{\scriptstyle \beta}]
\efig
$$
and also
$$
\bfig\scalefactor{.8}
\square/>`@{>}|{\bb}`@{>}|{\bb}`/<1000,500>[A`B`C`E;f`v`w`]

\morphism(0,0)|b|/>/<500,0>[C`D;g]

\morphism(500,0)|b|/>/<500,0>[D`E;h]

\place(500,250)[{\scriptstyle \alpha}]

\place(1500,250)[\longleftrightarrow]

\square(2000,-150)/>``@{>}|{\bb}`>/<500,800>[A`B`D`E\rlap{\ .};f``w`h]

\morphism(2000,650)|l|/@{>}|{\bb}/<0,-400>[A`C;v]

\morphism(2000,250)|l|/@{>}|{\bb}/<0,-400>[C`D;g_*]

\place(2250,250)[{\scriptstyle \beta}]

\efig
$$
If we combine the two we get a bijection
$$
\bfig\scalefactor{.8}
\square(0,150)/>`@{>}|{\bb}`@{>}|{\bb}`>/[A`B`C`D;f`v`w`g]

\place(250,400)[{\scriptstyle \alpha}]

\place(1000,400)[\longleftrightarrow]

\square(1500,0)/`@{>}|{\bb}`@{>}|{\bb}`=/<500,400>[C`B`D`D;`g_*`w`]

\square(1500,400)/=`@{>}|{\bb}`@{>}|{\bb}`/<500,400>[A`A`C`B;`v`f_*`]

\place(1750,400)[{\scriptstyle \widehat{\alpha}}]

\efig
$$
which is, in a sense, the conceptual basis for retrocells. That, and the
idea that $ f $ and $ f_* $ are really the same morphism in different roles.

We refer the reader to \cite{Gra20} for all unexplained double category
matters.

\section{Retrocells}

Let $ {\mathbb A} $ be a double category in which every horizontal arrow
has a companion and choose a companion for each (with $ \id_A $ as the companion of $ 1_A $).

\begin{definition}

A {\em retrocell} $ \alpha $ in $ {\mathbb A} $, denoted
$$
\bfig\scalefactor{.8}
\square/>`@{>}|{\bb}`@{>}|{\bb}`>/[A`B`C`D;f`v`w`g]

\morphism(170,250)/<=/<200,0>[`;\alpha]

\efig
$$
is a (standard) double cell of $ {\mathbb A} $ of the form
$$
\bfig\scalefactor{.8}
\square/`@{>}|{\bb}`@{>}|{\bb}`=/<500,400>[B`C`D`D\rlap{\ .};`w`g_*`]

\square(0,400)/=`@{>}|{\bb}`@{>}|{\bb}`/<500,400>[A`A`B`C;`f_*`v`]

\place(250,400)[{\scriptstyle \alpha}]

\efig
$$

\end{definition}

\begin{theorem}

The objects, horizontal and vertical arrows of $ {\mathbb A} $ together
with retrocells, form a double category $ {\mathbb A}^{ret} $.

\end{theorem}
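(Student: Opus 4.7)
The plan is to regard a retrocell over $(f,v,w,g)$ as nothing more than a globular cell $w\bdot f_* \Rightarrow g_*\bdot v$ in $\mathbb{A}$, and to build all the structure of $\mathbb{A}^{ret}$ by pasting such globular cells together, using the binding cells of Section~1 and the canonical globular isomorphism $(g'g)_*\cong g'_*\bdot g_*$ they produce.

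First I would define the composites and the identities. The horizontal composite of $\alpha\colon w\bdot f_*\Rightarrow g_*\bdot v$ and $\alpha'\colon x\bdot f'_*\Rightarrow g'_*\bdot w$ is obtained by conjugating the source via $(f'f)_*\cong f'_*\bdot f_*$, whiskering $\alpha'$ on the right by $f_*$ and $\alpha$ on the left by $g'_*$, vertically composing the two resulting globular cells in $\mathbb{A}$, and finally conjugating the target via $g'_*\bdot g_*\cong (g'g)_*$. The vertical composite of $\alpha$ with $\beta\colon w'\bdot g_*\Rightarrow h_*\bdot v'$ is defined analogously by whiskering $\alpha$ on the left by $w'$ and $\beta$ on the right by $v$ and then vertically composing in $\mathbb{A}$. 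The horizontal identity retrocell on a vertical arrow $v\colon A\tod C$ is the identity cell on $v$, which has the correct shape thanks to the convention $(1_A)_*=\id_A$ that makes $v\bdot (1_A)_* = v = (1_C)_*\bdot v$; the vertical identity retrocell on a horizontal arrow $f$ is the identity cell on $f_*$.

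Next I would verify the double-category axioms. Well-definedness of the composites is routine once the binding isomorphisms are fixed. Associativity and unit laws for both compositions reduce, after pasting rearrangement, to the corresponding laws in $\mathbb{A}$ together with coherence of the companion isomorphisms $(gf)_*\cong g_*\bdot f_*$; the strictness of $(1_A)_*=\id_A$ makes the unit laws on identity horizontals hold strictly. The main technical step, which I expect to be the chief obstacle, is the middle-four interchange: given a $2\times 2$ array of retrocells $\alpha,\alpha',\beta,\beta'$, both $(\beta'\cdot\beta)\bullet(\alpha'\cdot\alpha)$ and $(\beta'\bullet\alpha')\cdot(\beta\bullet\alpha)$ unfold into a single pasting diagram in $\mathbb{A}$ built from the four given cells and several whiskered identities. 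Their equality then follows from the strict interchange law of $\mathbb{A}$ applied to rearrange the grid, together with the placement of the companion isomorphisms on the common boundary; the only delicate part is the bookkeeping needed to lay the pasting out so that both orders of composition are visibly the same diagram.
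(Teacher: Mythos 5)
Your proposal is correct and follows essentially the same route as the paper: the same pasting definitions of the horizontal and vertical composites (conjugating by $(hf)_*\cong h_*\bdot f_*$ and $(kg)_*\cong k_*\bdot g_*$), the same identities exploiting $(1_A)_*=\id_A$, and the same reduction of the axioms to coherence of the companion isomorphisms plus interchange in $\mathbb{A}$. The only difference is one of emphasis: the paper's delicate point is horizontal associativity, where one must check that the two bracketings of $(f_3f_2f_1)_*\cong f_{3*}\bdot f_{2*}\bdot f_{1*}$ agree (the a priori unrelated binding cells $\chi_{f_2f_1}$ and $\chi_{f_3f_2}$ cancel against the corresponding $\psi$'s), whereas interchange is comparatively easy since the two middle canonical isomorphisms are mutually inverse.
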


\begin{proof}

The horizontal composite $ \beta \alpha $ of retrocells
$$
\bfig\scalefactor{.8}
\square/>`@{>}|{\bb}`@{>}|{\bb}`>/[A`B`C`D;f`v`w`g]

\morphism(180,250)/<=/<200,0>[`;\alpha]

\square(500,0)/>``@{>}|{\bb}`>/[B`E`D`F;h``x`k]

\morphism(680,250)/<=/<200,0>[`;\beta]

\efig
$$
is given by
$$
\bfig\scalefactor{.8}
\square/=`@{>}|{\bb}`@{>}|{\bb}`=/[E`E`F`F;`x`x`]

\place(250,250)[=]

\square(0,500)/=`@{>}|{\bb}``/<500,1000>[A`A`E`E;`(hf)_*``]

\place(250,1000)[{\scriptstyle \cong}]

\morphism(500,1500)|r|/@{>}|{\bb}/<0,-500>[A`B;f_*]

\morphism(500,1000)|r|/@{>}|{\bb}/<0,-500>[B`E;h_*]

\square(500,0)/``@{>}|{\bb}`=/[E`D`F`F;``k_*`]

\place(750,500)[{\scriptstyle \beta}]

\square(500,500)/=``@{>}|{\bb}`/[B`B`E`D;``w`]

\square(500,1000)/=``@{>}|{\bb}`/[A`A`B`B;``f_*`]

\place(750,1250)[=]

\square(1000,0)/=``@{>}|{\bb}`=/[D`D`F`F;``k_*`]

\place(1250,250)[=]

\square(1000,500)/``@{>}|{\bb}`/[B`C`D`D;``g_*`]

\place(1250,1000)[{\scriptstyle \alpha}]

\square(1000,1000)/=``@{>}|{\bb}`/[A`A`B`C;``v`]

\square(1500,0)/=``@{>}|{\bb}`=/<500,1000>[C`C`F`F;``(kg)_*`]

\place(1750,500)[{\scriptstyle \cong}]

\square(1500,1000)/=``@{>}|{\bb}`/[A`A`C`C;``v`]

\place(1750,1250)[=]

\efig
$$
where the \ $ \cong $ \  represent the canonical isomorphisms
$ (hf)_* \cong h_* \bdot f_* $ and $ (kg)_* \cong k_* \bdot g_* $,
$$
\bfig\scalefactor{.8}
\square/`@{>}|{\bb}`=`=/<1000,500>[A`E`E`E;`(hf)_*``]

\place(500,250)[\lrcorner]

\morphism(0,500)|a|/>/<500,0>[A`B;f]

\morphism(500,500)|a|/>/<500,0>[B`E;h]

\square(0,500)/>`=`=`/[A`B`A`B;f```]

\place(250,750)[\veq]

\square(500,500)/=``@{>}|{\bb}`/[B`B`B`E;``h_*`]

\place(750,750)[\ulcorner]

\square(0,1000)/=`=`@{>}|{\bb}`/[A`A`A`B;``f_*`]

\place(250,1250)[\ulcorner]

\square(500,1000)/=``@{>}|{\bb}`/[A`A`B`B;``f_*`]

\place(750,1250)[=]

\place(1400,750)[\mbox{and}]

\square(1850,0)/=`@{>}|{\bb}`@{>}|{\bb}`=/[D`D`F`F;`k_*`k_*`]

\place(2100,250)[=]

\square(2350,0)/>``=`=/[D`F`F`F\rlap{\ .};k```]

\place(2600,250)[\lrcorner]

\square(1850,500)/>`@{>}|{\bb}`=`/[C`D`D`D;g`g_*``]

\place(2100,750)[\lrcorner]

\square(2350,500)/>``=`/[D`F`D`F;k```]

\place(2600,750)[\veq]

\square(1850,1000)/=`=`@{>}|{\bb}`/<1000,500>[C`C`C`F;``(kg)_*`]

\place(2350,1250)[\ulcorner]

\efig
$$

The vertical composite $ \alpha' \bdot \alpha $ of retrocells
$$
\bfig\scalefactor{.8}
\square/>`@{>}|{\bb}`@{>}|{\bb}`>/[C`D`C'`D';g`v'`w'`h]

\morphism(180,250)/<=/<200,0>[`;\alpha']

\square(0,500)/>`@{>}|{\bb}`@{>}|{\bb}`/[A`B`C`D;f`v`w`]

\morphism(180,750)/<=/<200,0>[`;\alpha]

\efig
$$
is
$$
\bfig\scalefactor{.8}
\square/=`@{>}|{\bb}`@{>}|{\bb}`=/[D`D`D'`D';`w'`w'`]

\place(250,250)[=]

\square(0,500)/`@{>}|{\bb}`@{>}|{\bb}`/[B`C`D`D;`w`g_*`]

\square(0,1000)/=`@{>}|{\bb}`@{>}|{\bb}`/[A`A`B`C;`f_*`v`]

\place(250,1000)[{\scriptstyle \alpha}]

\square(500,0)/`@{>}|{\bb}`@{>}|{\bb}`=/[D`C'`D'`D'\rlap{\ .};``h_*`]

\place(750,500)[{\scriptstyle \alpha'}]

\square(500,500)/=``@{>}|{\bb}`/[C`C`D`C';``v'`]

\square(500,1000)/=``@{>}|{\bb}`/[A`A`C`C;``v`]

\place(750,1250)[=]

\efig
$$

Horizontal and vertical identities are
$$
\bfig\scalefactor{.8}
\square(0,250)/=`@{>}|{\bb}`@{>}|{\bb}`=/[A`A`C`C;`v`v`]

\morphism(180,500)/<=/<200,0>[`;1_v]

\place(700,500)[=]

\square(900,0)/`@{>}|{\bb}`=`=/[A`C`C`C;`v``]

\place(1150,500)[=]

\square(900,500)/=`=`@{>}|{\bb}`/[A`A`A`C;``v`]

\place(1650,500)[\mbox{and}]

\square(1900,250)/>`=`=`>/[A`B`A`B;f```f]

\place(2150,500)[{\scriptstyle \id_f}]

\place(2600,500)[=]

\square(2900,0)/`=`@{>}|{\bb}`=/[B`A`B`B\rlap{\ .};``f_*`]

\place(3150,500)[=]

\square(2900,500)/=`@{>}|{\bb}`=`/[A`A`B`A;`f_*``]

\efig
$$

There are a number of things to check (horizontal and vertical unit laws and associativities as well as interchange),
all of which are straightforward
calculations and will be left to the reader. It is merely a question of writing
out the diagrams and following the steps indicated schematically below.

The identity laws are trivial because of our conventions that $ (1_A)_* = \id_A $
and vertical identities are as strict as in $ {\mathbb A} $.

For retrocells
$$
\bfig\scalefactor{.8}
\square/>`@{>}|{\bb}`@{>}|{\bb}`>/[A_0`A_1`C_0`C_1;f_1`v_0`v_1`g_1]

\morphism(180,250)/<=/<200,0>[`;\alpha_1]

\square(500,0)/>``@{>}|{\bb}`>/[A_1`A_2`C_1`C_2;f_2``v_2`g_2]

\morphism(680,250)/<=/<200,0>[`;\alpha_2]

\square(1000,0)/>``@{>}|{\bb}`>/[A_2`A_3`C_2`C_3\rlap{\ ,};f_3``v_3`g_3]

\morphism(1180,250)/<=/<200,0>[`;\alpha_3]

\efig
$$
$ \alpha_3 (\alpha_2 \alpha_1) $ is a composite of 17 cells arranged in
a $ 4 \times 7 $ array represented schematically as

 \begin{center}
 \setlength{\unitlength}{.9mm}
 \begin{picture}(70,40)
 \put(0,0){\framebox(70,40){}}
 
 \put(10,0){\line(0,1){40}}
 \put(20,0){\line(0,1){40}}
 \put(30,0){\line(0,1){40}}
 \put(40,0){\line(0,1){40}}
 \put(50,0){\line(0,1){40}}
 \put(60,0){\line(0,1){40}}

\put(0,10){\line(1,0){10}}
\put(10,20){\line(1,0){10}}
\put(20,10){\line(1,0){10}}
\put(20,20){\line(1,0){10}}
\put(30,10){\line(1,0){10}}
\put(30,30){\line(1,0){10}}
\put(40,20){\line(1,0){10}}
\put(50,10){\line(1,0){10}}
\put(50,30){\line(1,0){10}}
\put(60,30){\line(1,0){10}}
 
 \put(5,25){\makebox(0,0){$\scriptstyle\cong$}}
 \put(15,10){\makebox(0,0){$\scriptstyle \alpha_3$}}
 \put(25,30){\makebox(0,0){$\scriptstyle \cong$}}
 \put(35,20){\makebox(0,0){$\scriptstyle \alpha_2$}}
 \put(45,30){\makebox(0,0){$\scriptstyle \alpha_1$}}
 \put(55,20){\makebox(0,0){$\scriptstyle \cong$}}
 \put(65,15){\makebox(0,0){$\scriptstyle \cong$}}
 
 \put(115,20){(1)}
 
 \put(75,0){.}
 
 \end{picture}
 \end{center}

\noindent The empty rectangles are horizontal identities  and the $ \cong $
represent canonical isomorphisms generated by companions.

$ (\alpha_3 \alpha_2) \alpha_1 $ on the other hand is of the form

\begin{center}
\setlength{\unitlength}{.9mm}
\begin{picture}(70,40)
\put(0,0){\framebox(70,40){}}
 
\put(10,0){\line(0,1){40}}
\put(20,0){\line(0,1){40}}
\put(30,0){\line(0,1){40}}
\put(40,0){\line(0,1){40}}
\put(50,0){\line(0,1){40}}
\put(60,0){\line(0,1){40}}

\put(0,10){\line(1,0){10}}
\put(10,10){\line(1,0){10}}
\put(10,30){\line(1,0){10}}
\put(20,20){\line(1,0){10}}
\put(30,10){\line(1,0){10}}
\put(30,30){\line(1,0){10}}
\put(40,20){\line(1,0){10}}
\put(40,30){\line(1,0){10}}
\put(50,20){\line(1,0){10}}
\put(60,30){\line(1,0){10}}
 
\put(5,25){\makebox(0,0){$\scriptstyle\cong$}}
\put(15,20){\makebox(0,0){$\scriptstyle \cong$}}
\put(25,10){\makebox(0,0){$\scriptstyle \alpha_3$}}
\put(35,20){\makebox(0,0){$\scriptstyle \alpha_2$}}
\put(45,10){\makebox(0,0){$\scriptstyle \cong$}}
\put(55,30){\makebox(0,0){$\scriptstyle \alpha_1$}}
\put(65,20){\makebox(0,0){$\scriptstyle \cong$}}
 
\put(115,20){(2)}

\put(75,0){.}
 
 \end{picture}
 \end{center}
 
 \noindent It is now clear what to do. Switch $ \alpha_3 $ with $ \cong $
 in (1) and $ \alpha_1 $ with $ \cong $ in (2) to get
 
\begin{center}
\setlength{\unitlength}{.9mm}
\begin{picture}(30,40)
\put(0,0){\framebox(30,40){}}
 
\put(10,0){\line(0,1){40}}
\put(20,0){\line(0,1){40}}
\put(30,0){\line(0,1){40}}

\put(0,20){\line(1,0){10}}
\put(10,10){\line(1,0){10}}
\put(10,30){\line(1,0){10}}
\put(20,20){\line(1,0){10}}

\put(5,10){\makebox(0,0){$\scriptstyle \alpha_3$}}
\put(15,20){\makebox(0,0){$\scriptstyle \alpha_2$}}
\put(25,30){\makebox(0,0){$\scriptstyle \alpha_1$}}

 \end{picture}
 \end{center}

\noindent in the middle in both cases. The $ 4 \times 2 $ block on
the left in (1) becomes

\begin{center}
\setlength{\unitlength}{.9mm}
\begin{picture}(20,40)
\put(0,0){\framebox(20,40){}}
 
\put(10,0){\line(0,1){40}}
\put(20,0){\line(0,1){40}}

\put(0,10){\line(1,0){10}}
\put(10,20){\line(1,0){10}}

\put(5,25){\makebox(0,0){$\scriptstyle \cong$}}
\put(15,30){\makebox(0,0){$\scriptstyle \cong$}}
 
\put(25,0){.}
 
\end{picture}
\end{center}

\noindent which is not formally the same as $ 4 \times 2 $ block in
(2), but they are equal by one of the coherence
identities for $ (\ )_* $. We write it out
$$
\bfig\scalefactor{.8}
\square/=`@{>}|{\bb}``=/<500,1500>[A_0`A_0`A_3`A_3;`(f_3 f_2 f_1)_*``]

\place(250,750)[{\scriptstyle \cong}]

\morphism(500,1500)|r|/@{>}|{\bb}/<0,-1000>[A_0`A_2;(f_2 f_1)_*]

\morphism(500,500)|r|/@{>}|{\bb}/<0,-500>[A_2`A_3;f_{3*}]

\square(500,0)/=``@{>}|{\bb}`=/<500,500>[A_2`A_2`A_3`A_3;``f_{3*}`]

\place(750,250)[=]

\square(500,0)/=```/<500,1500>[A_0`A_0`A_3`A_3;```]

\morphism(1000,1500)|r|/@{>}|{\bb}/<0,-500>[A_0`A_1;f_{1*}]

\morphism(1000,1000)|r|/@{>}|{\bb}/<0,-500>[A_1`A_2;f_{2*}]

\place(750,1150)[{\scriptstyle \cong}]

\place(1400,850)[=]

\square(2000,0)/=`@{>}|{\bb}``=/<500,1500>[A_0`A_0`A_3`A_3;`(f_3 f_2 f_1)_*``]

\place(2250,750)[{\scriptstyle \cong}]

\morphism(2500,1500)|r|/@{>}|{\bb}/<0,-500>[A_0`A_1;f_{1*}]

\morphism(2500,1000)|r|/@{>}|{\bb}/<0,-1000>[A_1`A_3;(f_3 f_2)_*]

\square(2500,1000)/=``@{>}|{\bb}`=/[A_0`A_0`A_1`A_1;``f_{1*}`]

\place(2750,1250)[{\scriptstyle =}]

\morphism(3000,1000)|r|/@{>}|{\bb}/<0,-500>[A_1`A_2;f_{2*}]

\morphism(3000,500)|r|/@{>}|{\bb}/<0,-500>[A_2`A_3\rlap{\ .};f_{3*}]

\morphism(2500,0)/=/<500,0>[A_3`A_3;]

\place(2750,700)[{\scriptstyle \cong}]

\efig
$$
There may be something to worry about here because
$ (f_2 f_1)_* \cong f_{2*} \bdot f_{1*} $ involves $ \chi_{f_2 f_1} $ whereas
$ (f_3 f_2)_* \cong f_{3*} \bdot f_{2*} $ involves $ \chi_{f_3 f_2} $ which are
unrelated. However both $ \chi_{f_2 f_1} $ and $ \chi_{f_3 f_2} $ cancel in
the composites. The left hand side is

\begin{center}
\setlength{\unitlength}{1mm}
\begin{picture}(40,50)
\put(0,0){\framebox(40,50){}}
 
\put(10,10){\line(0,1){10}}
\put(10,20){\line(0,1){10}}
\put(20,0){\line(0,1){50}}
\put(30,30){\line(0,1){20}}
\put(40,0){\line(0,1){50}}

\put(0,10){\line(1,0){20}}
\put(0,20){\line(1,0){40}}
\put(0,30){\line(1,0){40}}

\put(20,40){\line(1,0){20}}

\put(10,5){\makebox(0,0){$\scriptstyle \chi_{f_3 f_2 f_1}$}}
\put(15,15){\makebox(0,0){$\scriptstyle \psi_{f_3}$}}
\put(5,25){\makebox(0,0){$\scriptstyle \psi_{f_2 f_1}$}}
\put(30,25){\makebox(0,0){$\scriptstyle\chi_{f_2 f_1}$}}
\put(35,35){\makebox(0,0){$\scriptstyle \psi_{f_2}$}}
\put(25,45){\makebox(0,0){$\scriptstyle \psi_{f_1}$}}

 \end{picture}
 \end{center}
 
 \noindent and when we cancel $ \chi_{f_2 f_1} $ with $ \psi_{f_2 f_1} $
 leaving $ \id_{f_2 f_1} $, that composite reduces to
 
 \begin{center}
\setlength{\unitlength}{1mm}
\begin{picture}(30,40)
\put(0,0){\framebox(30,40){}}
 
\put(10,10){\line(0,1){30}}
\put(20,10){\line(0,1){30}}
\put(30,0){\line(0,1){40}}

\put(0,10){\line(1,0){30}}
\put(0,20){\line(1,0){30}}
\put(0,30){\line(1,0){30}}

\put(15,5){\makebox(0,0){$\scriptstyle \chi_{f_3 f_2 f_1}$}}
\put(25,15){\makebox(0,0){$\scriptstyle \psi_{f_3}$}}
\put(15,25){\makebox(0,0){$\scriptstyle \psi_{f_2}$}}
\put(5,35){\makebox(0,0){$\scriptstyle \psi_{f_1}$}}

 \end{picture}
 \end{center}
 
\noindent as does the right hand side.

The $ 4 \times 2 $ block on the right is the same with the roles
of $ \psi $ and $ \chi $ interchanged. This completes the proof
of associativity of horizontal composition of retrocells.

The associativity for vertical composition is much simpler as it does not
involve $ \psi $'s or $ \chi $'s, only the associativity isomorphisms of
$ {\mathbb A} $. In particular if $ {\mathbb A} $ were strict, then $ {\mathbb A}^{ret} $
would be too, and the proof of associativity would be merely a question
of writing down the two composites and observing that they are exactly
the same.

For interchange consider retrocells
$$
\bfig\scalefactor{.8}
\square/>`@{>}|{\bb}`@{>}|{\bb}`>/[B_0`B_1`C_0`C_1;g_1`w_0`w_1`h_1]

\morphism(180,250)/<=/<200,0>[`;\beta_1]

\square(500,0)/>``@{>}|{\bb}`>/[B_1`B_2`C_1`C_2\rlap{\ .};g_2``w_2`h_1]

\morphism(680,250)/<=/<200,0>[`;\beta_2]

\square(0,500)/>`@{>}|{\bb}`@{>}|{\bb}`/[A_0`A_1`B_0`B_1;f_1`v_0`v_1`]

\morphism(180,750)/<=/<200,0>[`;\alpha_1]

\square(500,500)/>``@{>}|{\bb}`/[A_1`A_2`B_1`B_2;f_2``v_2`]

\morphism(680,750)/<=/<200,0>[`;\alpha_2]

\efig
$$
Then the pattern for $ (\beta_2 \beta_1) \bdot (\alpha_2 \alpha_1) $ is

 \begin{center}
 \setlength{\unitlength}{.9mm}
 \begin{picture}(80,40)
 \put(0,0){\framebox(80,40){}}
 
 \put(10,0){\line(0,1){40}}
 \put(20,0){\line(0,1){40}}
 \put(30,0){\line(0,1){40}}
 \put(40,0){\line(0,1){40}}
 \put(50,0){\line(0,1){40}}
 \put(60,0){\line(0,1){40}}
 \put(70,0){\line(0,1){40}}

\put(0,10){\line(1,0){50}}
\put(60,10){\line(1,0){10}}
\put(0,20){\line(1,0){10}}
\put(20,20){\line(1,0){10}}
\put(50,20){\line(1,0){10}}
\put(10,30){\line(1,0){10}}
\put(30,30){\line(1,0){50}}

\put(5,30){\makebox(0,0){$\scriptstyle \cong$}}
\put(15,20){\makebox(0,0){$\scriptstyle \alpha_2$}}
\put(25,30){\makebox(0,0){$\scriptstyle \alpha_1$}}
\put(35,20){\makebox(0,0){$\scriptstyle \cong$}}
\put(45,20){\makebox(0,0){$\scriptstyle \cong$}}
\put(55,10){\makebox(0,0){$\scriptstyle \beta_2$}}
\put(65,20){\makebox(0,0){$\scriptstyle \beta_1$}}
\put(75,15){\makebox(0,0){$\scriptstyle \cong$}}
%
 
 \end{picture}
 \end{center}

\noindent and for $ (\beta_2 \bdot \alpha_2) (\beta_1 \bdot \alpha_1) $ it is

 \begin{center}
 \setlength{\unitlength}{.9mm}
 \begin{picture}(60,40)
 \put(0,0){\framebox(60,40){}}
 
 \put(10,0){\line(0,1){40}}
 \put(20,0){\line(0,1){40}}
 \put(30,0){\line(0,1){40}}
 \put(40,0){\line(0,1){40}}
 \put(50,0){\line(0,1){40}}

\put(10,10){\line(1,0){10}}
\put(30,10){\line(1,0){20}}
\put(0,20){\line(1,0){10}}
\put(20,20){\line(1,0){20}}
\put(50,20){\line(1,0){10}}
\put(10,30){\line(1,0){20}}
\put(40,30){\line(1,0){10}}

\put(5,30){\makebox(0,0){$\scriptstyle \cong$}}
\put(15,20){\makebox(0,0){$\scriptstyle \alpha_2$}}
\put(25,10){\makebox(0,0){$\scriptstyle \beta_2$}}
\put(35,30){\makebox(0,0){$\scriptstyle \alpha_1$}}
\put(45,20){\makebox(0,0){$\scriptstyle \beta_1$}}
\put(55,10){\makebox(0,0){$\scriptstyle \cong$}}

\put(65,0){.}
 
 \end{picture}
 \end{center}

\noindent The two $ \cong $ in the middle of the first one are
inverse to each other,
$$
g_{2*} \bdot g_{1*} \to^{\cong} (g_2 g_1)_* \to^{\cong} g_{2*} \bdot g_{1*}\ ,
$$
so each of $ (\beta_2 \beta_1) \bdot (\alpha_2 \alpha_1) $ and
$ (\beta_2 \bdot \alpha_2) (\beta_1 \bdot \alpha_1) $ is equal to

 \begin{center}
 \setlength{\unitlength}{.9mm}
 \begin{picture}(50,40)
 \put(0,0){\framebox(50,40){}}
 
 \put(10,0){\line(0,1){40}}
 \put(20,0){\line(0,1){40}}
 \put(30,0){\line(0,1){40}}
 \put(40,0){\line(0,1){40}}
 \put(50,0){\line(0,1){40}}

\put(10,10){\line(1,0){10}}
\put(30,10){\line(1,0){10}}
\put(0,20){\line(1,0){10}}
\put(20,20){\line(1,0){10}}
\put(40,20){\line(1,0){10}}
\put(10,30){\line(1,0){10}}
\put(30,30){\line(1,0){10}}

\put(5,30){\makebox(0,0){$\scriptstyle \cong$}}
\put(15,20){\makebox(0,0){$\scriptstyle \alpha_2$}}
\put(25,30){\makebox(0,0){$\scriptstyle \alpha_1$}}
\put(25,10){\makebox(0,0){$\scriptstyle \beta_2$}}
\put(35,20){\makebox(0,0){$\scriptstyle \beta_1$}}
\put(45,10){\makebox(0,0){$\scriptstyle \cong$}}

 \end{picture}
 \end{center}

\noindent completing the proof.

\end{proof}

\begin{theorem}
\label{Thm-DoubDual}
(1) $ {\mathbb A}^{ret} $ has a canonical choice of companions.

\noindent (2) There is a canonical isomorphism of double categories with
companions
$$
{\mathbb A} \to^\cong {\mathbb A}^{ret\ ret}
$$
which is the identity on objects and horizontal and vertical arrows.

\end{theorem}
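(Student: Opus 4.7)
For Part~(1), I would take the companion of a horizontal arrow $f \colon A \to B$ in $\mathbb{A}^{ret}$ to be $f_*$, the chosen companion in $\mathbb{A}$. Unwinding the retrocell definition, a binding retrocell of shape $(1_A,\, \id_A,\, f_*,\, f)$ becomes a standard cell of $\mathbb{A}$ whose two vertical sides both collapse to $f_*$ (using $(1_A)_* = \id_A$ and the strictness of vertical identities), so I would take it to be $1_{f_*}$; the other binding retrocell is defined dually. To verify the companion equations in $\mathbb{A}^{ret}$ I would expand $\beta\alpha$ and $\alpha \bullet \beta$ using the horizontal and vertical composition formulas of Theorem~2.1 and observe that the canonical isomorphisms $(gf)_* \cong g_* \bdot f_*$ that appear (applied here with one of $g, f$ an identity) cancel with their inverses, leaving precisely $\id_f$ and $1_{f_*}$ respectively.

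For Part~(2), since the chosen companions of $\mathbb{A}^{ret}$ agree with those of $\mathbb{A}$ by Part~(1), I can trace through what a cell of $\mathbb{A}^{ret\,ret}$ with boundary $(f, v, w, g)$ really is. It is a retrocell of $\mathbb{A}^{ret}$, hence a standard cell of $\mathbb{A}^{ret}$ with horizontal identities and vertical edges $w \bdot f_*$ (left) and $g_* \bdot v$ (right); and this in turn is a retrocell of $\mathbb{A}$ whose own unwinding, again absorbing the intermediate companions by $(1_X)_* = \id_X$, yields a standard cell of $\mathbb{A}$ with horizontal identities on top and bottom and vertical edges $g_* \bdot v$ (left) and $w \bdot f_*$ (right). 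This is exactly the shape of the combined sliding image $\widehat\alpha$ at the end of Section~1, so I would define the isomorphism $\mathbb{A} \to \mathbb{A}^{ret\,ret}$ to be the identity on objects, horizontal arrows, and vertical arrows, with the sliding bijection $\alpha \leftrightarrow \widehat\alpha$ on cells.

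The remaining task is to check that this is a functor of double categories preserving the chosen companions. Companion preservation is immediate: $F(f_*) = f_*$ on the nose, and a direct calculation shows that $F$ sends $\psi_f$ and $\chi_f$ to the binding retrocells $1_{f_*}$ constructed in Part~(1). Identity cells go to identity cells because the normalization $(1_X)_* = \id_X$ trivializes every intermediate coherence isomorphism. The real work is in the composition laws: a horizontal composite of cells in $\mathbb{A}^{ret\,ret}$ expands via Theorem~2.1 applied once in $\mathbb{A}^{ret}$ and once in $\mathbb{A}$ into a large pasting of cells of $\mathbb{A}$ interleaved with two layers of $\psi$'s, $\chi$'s and their inverses; I expect these layers to cancel pairwise by exactly the same bookkeeping used in the proof of Theorem~2.1, leaving the sliding image of the horizontal composite in $\mathbb{A}$. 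Vertical composition is analogous but avoids companions altogether. The only genuine obstacle is organizational --- managing the doubly nested pasting diagrams and matching up cancellations across the two layers --- but no new coherence input beyond that of Theorem~2.1 should be required.
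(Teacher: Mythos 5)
Your proposal is correct and follows essentially the same route as the paper: take $f_*$ with identity binding retrocells (justified by the normalization $(1_A)_* = \id_A$ and coherence of the canonical isomorphisms) for Part (1), and for Part (2) unwind a cell of $\mathbb{A}^{ret\,ret}$ twice to land on a globular cell $g_* \bdot v \to w \bdot f_*$, identified with a standard cell via the combined sliding bijection of Section 1. The paper likewise omits the detailed verification that composition is preserved, so your extra remarks on how the $\psi$'s and $\chi$'s cancel only elaborate on what the paper calls a straightforward calculation.
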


\begin{proof}
The companion of $ f \colon A \to B $ in $ {\mathbb A}^{ret} $ is $ f_* $,
$ f $'s companion in $ {\mathbb A} $ with binding retrocells
$$
\bfig\scalefactor{.8}
\square(0,250)/>`@{>}|{\bb}`=`=/[A`B`B`B;f`f_*``]

\morphism(170,500)/<=/<200,0>[`;]

\place(800,500)[=]

\square(1200,0)/`@{>}|{\bb}`@{>}|{\bb}`=/[B`B`B`B;`\id_B`\id_B`]

\square(1200,500)/=`@{>}|{\bb}`@{>}|{\bb}`/[A`A`B`B;`f_*`f_*`]

\place(1450,500)[{\scriptstyle 1}]

\efig
$$
and
$$
\bfig\scalefactor{.8}
\square(0,250)/=`=`@{>}|{\bb}`>/[A`A`A`B;``f_*`f]

\morphism(170,500)/<=/<200,0>[`;]

\place(800,500)[=]

\square(1200,0)/`@{>}|{\bb}`@{>}|{\bb}`=/[A`A`B`B\rlap{\ .};`f_*`f_*`]

\square(1200,500)/=`@{>}|{\bb}`@{>}|{\bb}`/[A`A`A`A;`\id_A`\id_A`]

\place(1450,500)[{\scriptstyle 1}]

\efig
$$
The binding equations only involve canonical isos so hold by coherence.

A cell $ \alpha $ in $ {\mathbb A}^{ret\,ret} $, i.e. a retrocell in $ {\mathbb A}^{ret} $
is
$$
\bfig\scalefactor{.8}
\square(0,250)/>`@{>}|{\bb}`@{>}|{\bb}`>/[A`B`C`D;f`v`w`g]

\morphism(170,500)/<=/<200,0>[`;\alpha]

\place(800,500)[=]

\square(1200,0)/`@{>}|{\bb}`@{>}|{\bb}`=/[B`C`D`D;`w`g_*`]

\square(1200,500)/=`@{>}|{\bb}`@{>}|{\bb}`/[A`A`B`C;`f_*`v`]

\place(1450,500)[{\scriptstyle \alpha}]

\place(2200,500)[\mbox{in\ \ ${\mathbb A}^{ret}$}]

\efig
$$
$$
\bfig\scalefactor{.8}
\place(800,750)[=]

\place(100,0)[\ ]

\square(1250,0)/`@{>}|{\bb}`@{>}|{\bb}`=/[C`D`D`D;`g_*`\id_D`]

\square(1250,500)/`@{>}|{\bb}`@{>}|{\bb}`/[A`B`C`D;`v`w`]

\square(1250,1000)/=`@{>}|{\bb}`@{>}|{\bb}`/[A`A`A`B;`\id_A`f_*`]

\place(1500,750)[{\scriptstyle \alpha}]

\place(2200,750)[\mbox{in\ \ ${\mathbb A}$}]

\efig
$$
and these are in canonical bijection with
$$
\bfig\scalefactor{.8}
\square/>`@{>}|{\bb}`@{>}|{\bb}`>/[A`B`C`D\rlap{\ .};f`v`w`g]

\place(250,250)[{\scriptstyle \alpha'}]

\efig
$$

Checking that composition and identities are preserved is a
straightforward calculation and is omitted.

\end{proof}

\begin{example}\rm

If $ \cal{A} $ is a $ 2 $-category, the double category of quintets
$ {\mathbb Q}\cal{A} $ has the same objects as $ \cal{A} $, the 
$ 1 $-cells of $ \cal{A} $ as both horizontal and vertical arrows, and
cells $ \alpha $
$$
\bfig\scalefactor{.8}
\square/>`@{>}|{\bb}`@{>}|{\bb}`>/[A`B`C`D;f`h`k`g]

\place(250,250)[{\scriptstyle \alpha}]

\place(850,250)[=]

\square(1200,0)[A`B`C`D;f`h`k`g]

\morphism(1550,350)/=>/<-140,-140>[`;\alpha]

\efig
$$
i.e. a $ 2 $-cell $ \alpha \colon k f \to g h $. Horizontal and vertical
composition are given by pasting. Every horizontal arrow $ f \colon A \to B $
has a companion, $ f_* $, namely $ f $ itself considered as a vertical
arrow. A retrocell
$$
\bfig\scalefactor{.8}
\square/>`@{>}|{\bb}`@{>}|{\bb}`>/[A`B`C`D;f`h`k`g]

\morphism(180,250)/<=/<200,0>[`;\alpha]

\efig
$$
is
$$
\bfig\scalefactor{.8}
\square/`@{>}|{\bb}`@{>}|{\bb}`=/[B`C`D`D;`k`g_*`]

\square(0,500)/=`@{>}|{\bb}`@{>}|{\bb}`/[A`A`B`C;`f_*`h`]

\place(250,500)[{\scriptstyle \alpha}]

\place(900,500)[=]

\square(1400,250)[A`A`D`D;1_A`k f`g h`1_D]

\morphism(1750,600)/=>/<-140,-140>[`;\alpha]

\efig
$$
i.e. a coquintet
$$
\bfig\scalefactor{.8}
\square[A`B`C`D\rlap{\ .};f`h`k`g]

\morphism(220,200)/=>/<140,140>[`;\alpha]

\efig
$$
Thus
$$
({\mathbb Q} {\cal{A}})^{ret} = {\rm co}{\mathbb Q}{\cal{A}} = {\mathbb Q}({\cal{A}}^{co})\ .
$$

\end{example}

\section{Adjoints, companions, mates}

The well-known mates calculus says that if we have functors
$ F, G, H, K, U, V $ as below with $ F \dashv U $ and $ G \dashv V $,
then there is a bijection between natural transformations $ t $
and $ u $ as below
$$
\bfig\scalefactor{.8}
\square[{\bf A}`{\bf B}`{\bf C}`{\bf D};H`U`V`K]

\morphism(180,250)/=>/<200,0>[`;t]

\place(900,250)[\longleftrightarrow]

\square(1300,0)[{\bf C}`{\bf D}`{\bf A}`{\bf B}\rlap{\ .};K`F`G`H]

\morphism(1680,250)/=>/<-200,0>[`;u]

\efig
$$
This is usually stated for bicategories but with the
help of retrocells we can extend it to double categories (with
companions).

To say that two horizontal arrows are adjoint in a double category
$ {\mathbb A} $ means they are so in the $ 2 $-category of
horizontal arrows $ {\cal{H}}{\it or} {\mathbb A} $. So $ h $ left
adjoint to $ f $ means we are given cells
$$
\bfig\scalefactor{.8}
\square/`=`=`=/<800,400>[A`A`A`A;```]

\morphism(0,400)/>/<400,0>[A`B;f]

\morphism(400,400)/>/<400,0>[B`A;h]

\place(400,200)[{\scriptstyle \epsilon}]

\place(1100,200)[\mbox{and}]

\square(1400,0)/=`=`=`/<800,400>[B`B`B`B;```]

\morphism(1400,0)|b|/>/<400,0>[B`A;h]

\morphism(1800,0)|b|/>/<400,0>[A`B;f]

\place(1800,200)[{\scriptstyle \eta}]

\efig
$$
satisfying the ``triangle'' identities
$$
\bfig\scalefactor{.8}
\square/`=`=`=/<1000,500>[A`A`A`A;```]

\morphism(0,500)/>/<500,0>[A`B;f]

\morphism(500,500)/>/<500,0>[B`A;h]

\place(500,250)[{\scriptstyle \epsilon}]

\square(1000,0)/>``=`>/[A`B`A`B;f```f]

\place(1250,250)[\veq]

\square(0,500)/>`=`=`/[A`B`A`B;f```]

\place(250,750)[\veq]

\square(500,500)/=``=`/<1000,500>[B`B`B`B;```]

\place(1000,750)[{\scriptstyle \eta}]

\place(1900,500)[=]

\square(2300,250)/>`=`=`>/[A`B`A`B;f```f]

\place(2550,500)[\veq]

\efig
$$
and
$$
\bfig\scalefactor{.8}
\square/>`=`=`>/[B`A`B`A;h```h]

\place(250,250)[\veq]

\square(500,0)/``=`=/<1000,500>[A`A`A`A;```]

\place(1000,250)[{\scriptstyle \epsilon}]

\morphism(500,500)/>/<500,0>[A`B;f]

\morphism(1000,500)/>/<500,0>[B`A;h]

\square(0,500)/=`=`=`/<1000,500>[B`B`B`B;```]

\place(500,750)[{\scriptstyle \eta}]

\square(1000,500)/>``=`/[B`A`B`A;h```]

\place(1250,750)[\veq]

\place(1900,500)[=]

\square(2300,250)/>`=`=`>/[B`A`B`A;h```h]

\place(2550,500)[\veq]

\place(2950,0)[.]

\efig
$$

To say that the vertical arrows are adjoint means that they are so
in the vertical bicategory $ {\cal{V}}{\it ert} {\mathbb A} $. So
$ x $ is left adjoint to $ v $ if we are given cells
$$
\bfig\scalefactor{.8}
\square/=``=`=/<550,1000>[A`A`A`A;```]

\place(275,500)[{\scriptstyle \epsilon}]

\morphism(0,1000)/@{>}|{\bb}/<0,-500>[A`C;v]

\morphism(0,500)/@{>}|{\bb}/<0,-500>[C`A;x]

\place(1000,500)[\mbox{and}]

\square(1450,0)/=`=``=/<550,1000>[C`C`C`C;```]

\morphism(2000,1000)|r|/@{>}|{\bb}/<0,-500>[C`A;x]

\morphism(2000,500)|r|/@{>}|{\bb}/<0,-500>[A`C;v]

\place(1750,500)[{\scriptstyle \eta}]

\efig
$$
also satisfying the triangle identities.

Suppose we are given horizontal arrows $ f $ and $ h $ with
cells $ \alpha_1 $ and $ \beta_1 $ as below. In the presence of
companions we can use sliding to transform them. We have
bijections
$$
\bfig\scalefactor{.8}
\square(0,0)/`=`=`=/<1000,500>[A`A`A`A;```]

\place(500,250)[{\scriptstyle \alpha_1}]

\morphism(0,500)/>/<500,0>[A`B;f]

\morphism(500,500)/>/<500,0>[B`A;h]

\place(1300,250)[\longleftrightarrow]

\square(1600,0)/>`=`@{>}|{\bb}`=/[A`B`A`A;f``h_*`]

\place(1850,250)[{\scriptstyle \alpha_2}]

\place(2450,250)[\longleftrightarrow]

\square(2800,-250)/=`=``=/<500,1000>[A`A`A`A;```]

\place(3050,250)[{\scriptstyle \alpha_3}]

\morphism(3300,750)|r|/@{>}|{\bb}/<0,-500>[A`B;f_*]

\morphism(3300,250)|r|/@{>}|{\bb}/<0,-500>[B`A;h_*]

\efig
$$

$$
\bfig\scalefactor{.8}
\square(0,0)/=`=`=`/<1000,500>[B`B`B`B;```]

\place(500,250)[{\scriptstyle \beta_1}]

\morphism(0,0)|b|/>/<500,0>[B`A;h]

\morphism(500,0)|b|<500,0>[A`B;f]

\place(1300,250)[\longleftrightarrow]

\square(1600,0)/=`@{>}|{\bb}`=`>/[B`B`A`B;`h_*``f]

\place(1850,250)[{\scriptstyle \beta_2}]

\place(2400,250)[\longleftrightarrow]

\square(2800,-250)/=``=`=/<500,1000>[B`B`B`B\rlap{\ .};```]

\place(3050,250)[{\scriptstyle \beta_3}]

\morphism(2800,750)/@{>}|{\bb}/<0,-500>[B`A;h_*]

\morphism(2800,250)/@{>}|{\bb}/<0,-500>[A`B;f_*]

\efig
$$

\begin{proposition}
\label{Prop-AdjComp}

$ h $ is left adjoint to $ f $ with adjunctions $ \alpha_1 $
and $ \beta_1 $ if and only if $ f_* $ is left adjoint to $ h_* $
with adjunctions $ \beta_3 $ and $ \alpha_3 $.

\end{proposition}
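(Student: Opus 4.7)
The argument hinges on the sliding bijections recalled at the end of Section~1, which are natural with respect to pasting. My strategy is to show that each of the two triangle identities for the horizontal adjunction $h \dashv f$, when pushed through the successive sliding bijections $\alpha_1 \leftrightarrow \alpha_2 \leftrightarrow \alpha_3$ and $\beta_1 \leftrightarrow \beta_2 \leftrightarrow \beta_3$, becomes exactly a triangle identity for the vertical adjunction $f_* \dashv h_*$.

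I would start by reading off the shapes. Since $\alpha_3$ is vertically globular with $\id_A$ on the left and $h_* \bdot f_*$ on the right, it represents a $2$-cell between these two vertical arrows $A \tod A$ in the vertical bicategory; symmetrically $\beta_3$ represents a $2$-cell between $f_* \bdot h_*$ and $\id_B$. These have precisely the shape required to serve as unit and counit for $f_* \dashv h_*$, so only the triangle identities themselves need to be verified.

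I would then take one of the horizontal triangle equalities, say the one displayed that reduces to $\id_f$, view it as an equation of pasting diagrams in $\mathbb{A}$, and apply sliding cell by cell to each side. Each sliding step is implemented by pasting with binding cells $\psi$ and $\chi$, so it automatically respects horizontal and vertical composition. After cancelling all introduced binding-cell pairs via the companion axioms $\chi_f \circ \psi_f = \id_f$ and $\chi_f \bdot \psi_f = 1_{f_*}$, the slid equation reduces to the appropriate triangle identity for $f_* \dashv h_*$ in the vertical bicategory. Sliding the second horizontal triangle identity, the one that reduces to $\id_h$, yields the second vertical triangle identity.

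The main obstacle is purely diagrammatic bookkeeping: writing each horizontal triangle identity as a pasting, tracking which $\psi$'s and $\chi$'s are introduced by the slides, and matching the resulting pairs that collapse. Because the sliding bijections are built from pasting with exactly these binding cells, and their defining axioms are precisely what is needed to perform the cancellations, the rearrangement is mechanical rather than clever. The converse implication is then automatic: the sliding bijections are invertible, so running the correspondences in reverse sends the triangle identities for $f_* \dashv h_*$ back to those for $h \dashv f$.
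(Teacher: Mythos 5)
Your argument is correct: reading off that $\alpha_3$ and $\beta_3$ have exactly the unit and counit shapes for $f_* \dashv h_*$ in the vertical bicategory, and then transporting the two triangle identities through the sliding bijections by pasting with the binding cells and cancelling the resulting $\psi$/$\chi$ pairs via the companion equations, is precisely the intended route --- the paper states this proposition without proof, relying on the sliding bijections it has just set up, so your write-up supplies the omitted routine verification in the expected way. The invertibility of sliding gives the converse, as you note, and no further idea is needed.
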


\begin{theorem}
\label{Thm-Mates}

Consider horizontal morphisms $ f $ and $ g $ and vertical
morphisms $ v $ and $ w $ as in
$$
\bfig\scalefactor{.8}
\square/>`@{>}|{\bb}`@{>}|{\bb}`>/[A`B`C`D\rlap{\ .};f`v`w`g]

\efig
$$
(1) If $ x $ is left adjoint to $ v $ and $ y $ left adjoint to $ w $,
then there is a bijection between cells $ \alpha $ and retrocells
$ \beta $ as in
$$
\bfig\scalefactor{.8}
\square/>`@{>}|{\bb}`@{>}|{\bb}`>/[A`B`C`D;f`v`w`g]

\place(250,250)[{\scriptstyle \alpha}]

\place(900,250)[\longleftrightarrow]

\square(1300,0)/>`@{>}|{\bb}`@{>}|{\bb}`>/[C`D`A`B\rlap{\ .};g`x`y`f]

\morphism(1620,250)/=>/<-150,0>[`;\beta]

\efig
$$
(2) If $ h $ is left adjoint to $ f $ and $ k $ left adjoint to $ g $,
then there is a bijection between cells $ \alpha $ and retrocells
$ \gamma $ as in
$$
\bfig\scalefactor{.8}
\square/>`@{>}|{\bb}`@{>}|{\bb}`>/[A`B`C`D;f`v`w`g]

\place(250,250)[{\scriptstyle \alpha}]

\place(900,250)[\longleftrightarrow]

\square(1300,0)/>`@{>}|{\bb}`@{>}|{\bb}`>/[B`A`D`C\rlap{\ .};h`w`v`k]

\morphism(1620,250)/=>/<-150,0>[`;\gamma]

\efig
$$

\end{theorem}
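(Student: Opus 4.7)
The key idea is that both parts reduce to the classical $2$-categorical mates calculus once we translate cells via sliding into the vertical bicategory $ {\cal{V}}{\it ert}\,{\mathbb A} $. By the combined sliding bijection established at the end of Section 1, a cell $ \alpha $ with frame $ (f, v, w, g) $ corresponds naturally to a cell $ \widehat\alpha $ with horizontal identity top and bottom and vertical legs $ g_* \bdot v $ on the left and $ w \bdot f_* $ on the right; equivalently, a $2$-cell between $ g_* \bdot v $ and $ w \bdot f_* $ in $ {\cal{V}}{\it ert}\,{\mathbb A} $. Unwinding the definition of retrocell from Section 2, a retrocell $ \beta $ with frame $ (g, x, y, f) $ as in (1) is the same data as a $2$-cell between $ y \bdot g_* $ and $ f_* \bdot x $, and a retrocell $ \gamma $ with frame $ (h, w, v, k) $ as in (2) is a $2$-cell between $ v \bdot h_* $ and $ k_* \bdot w $. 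So in each part the statement reduces to exhibiting a natural bijection of $2$-cells in the vertical bicategory.

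For part (1), the hypothesized adjunctions $ x \dashv v $ and $ y \dashv w $ live in $ {\cal{V}}{\it ert}\,{\mathbb A} $ and, together with the $1$-cells $ f_* $ and $ g_* $, form exactly the standard mates configuration recalled at the start of Section 3. The mate of $ \widehat\alpha $ is built by whiskering with $ y $ on the left and $ x $ on the right and then pasting on appropriate instances of the units and counits of $ x \dashv v $ and $ y \dashv w $. Standard $2$-categorical mates theory makes this a bijection, with inverse given by the symmetric construction; the resulting $2$-cell lives between $ y \bdot g_* $ and $ f_* \bdot x $, so it is precisely $ \beta $.

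For part (2), first invoke Proposition \ref{Prop-AdjComp} to transport the horizontal adjunctions $ h \dashv f $ and $ k \dashv g $ into vertical adjunctions $ f_* \dashv h_* $ and $ g_* \dashv k_* $ in $ {\cal{V}}{\it ert}\,{\mathbb A} $. These flank the same square $ (f_*, v, w, g_*) $ on the opposite pair of sides, so the analogous mates recipe — whisker $ \widehat\alpha $ with $ k_* $ on the left and $ h_* $ on the right and paste in the units and counits of $ f_* \dashv h_* $ and $ g_* \dashv k_* $ — produces a $2$-cell between $ v \bdot h_* $ and $ k_* \bdot w $, i.e.\ the retrocell $ \gamma $. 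This too is a bijection by standard mates theory.

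The main expected obstacle is purely bookkeeping: fixing the direction conventions (whether $ \widehat\alpha $ is read from its ``left'' leg to its ``right'' leg or the reverse, and correspondingly which units vs.\ counits appear in the mates formula) so that the constructed mate is the retrocell itself rather than a formal inverse. Beyond this, no ideas beyond sliding (Section 1), Proposition \ref{Prop-AdjComp}, and classical bicategorical mates are required.
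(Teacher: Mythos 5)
Your proposal is correct and follows essentially the same route as the paper: slide $\alpha$ to $\widehat{\alpha}\colon g_*\bdot v \to w\bdot f_*$ in $ {\cal V}{\it ert}\,{\mathbb A} $, observe that the retrocells in question are exactly globular $2$-cells $ y\bdot g_* \to f_*\bdot x $ (resp.\ $ v\bdot h_* \to k_*\bdot w $), and apply the classical bicategorical mates bijection, using Proposition \ref{Prop-AdjComp} in part (2) to convert the horizontal adjunctions into the vertical adjunctions $ f_*\dashv h_* $ and $ g_*\dashv k_* $. The only difference is that you spell out the whiskering/unit--counit construction of the mate, which the paper leaves implicit.
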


\begin{proof}
(1) Standard cells $ \alpha $ are in bijection with $ 2 $-cells
$ \widehat{\alpha} $ in the bicategory $ {\cal{V}}{\it ert} {\mathbb A} $
$$
\bfig\scalefactor{.8}
\square(0,250)/>`@{>}|{\bb}`@{>}|{\bb}`>/[A`B`C`D;f`v`w`g]

\place(250,500)[{\scriptstyle \alpha}]

\place(900,500)[\longleftrightarrow]

\square(1300,0)/`@{>}|{\bb}`@{>}|{\bb}`=/[C`B`D`D;`g_*`w`]

\square(1300,500)/=`@{>}|{\bb}`@{>}|{\bb}`/[A`A`C`B;`v`\alpha`]

\place(1550,500)[{\scriptstyle \widehat{\alpha}}]

\efig
$$
and retrocells $ \beta $ are defined to be $ 2 $-cells in $ {\cal{V}}{\it ert}{\mathbb A} $
$$
\bfig\scalefactor{.8}
\square/`@{>}|{\bb}`@{>}|{\bb}`=/[D`A`B`B\rlap{\ .};`y`f_*`]

\square(0,500)/=`@{>}|{\bb}`@{>}|{\bb}`/[C`C`D`A;`g_*`x`]

\place(250,500)[{\scriptstyle \beta}]

\efig
$$
Then our claimed bijection is just the usual bijection from bicategory
theory:
$$
\frac{\widehat{\alpha} \colon g_* \bdot v \to w \bdot f_*}
{\beta \colon y \bdot g_* \to f_* \bdot x\rlap{\ .}}
$$
(2) From the previous proposition we have $ f_* $ is left adjoint
to $ h_* $ and $ g_* $ left adjoint to $ k_* $, and again our
bijection follows from the usual bicategory one:
$$
\frac{\widehat{\alpha} \colon g_* \bdot v \to w_* \bdot f_*}
{\gamma \colon v \bdot h_* \to k_* \bdot w}
$$
%
%

\end{proof}

\begin{corollary} (1) If $ f $ has a left adjoint $ h $, $ g $ a left
adjoint $ k $, $ v $ a right adjoint $ x $ and $ w $ a right adjoint
$ y $, then we have a bijection of cells
$$
\bfig\scalefactor{.8}
\square/>`@{>}|{\bb}`@{>}|{\bb}`>/[A`B`C`D;f`v`w`g]

\place(250,250)[{\scriptstyle \alpha}]

\place(900,250)[\longleftrightarrow]

\square(1300,0)/>`@{>}|{\bb}`@{>}|{\bb}`>/[D`C`B`A\rlap{\ .};k`y`x`h]

\place(1550,250)[{\scriptstyle \delta}]

\efig
$$
(2) We get the same bijection if left and right are interchanged in all
four adjunctions.

\end{corollary}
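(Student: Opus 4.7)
The plan is to obtain the bijection by composing the two bijections of Theorem~\ref{Thm-Mates} in sequence. Each of them converts a standard cell into a retrocell (or, read backwards, a retrocell into a standard cell) by replacing one pair of parallel arrows with their adjoints; chaining them converts both pairs, and what results is a bijection between two standard cells whose frames are related as in the corollary.

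For part (1), the hypotheses $h \dashv f$ and $k \dashv g$ are exactly those of Theorem~\ref{Thm-Mates}(2), so that part produces a bijection $\alpha \leftrightarrow \gamma$, where $\gamma$ is a retrocell with frame top $h$, bottom $k$, left vertical $w$, and right vertical $v$. Now I re-letter Theorem~\ref{Thm-Mates}(1) to fit the situation: the retrocell side of that theorem has verticals $x$ (left) and $y$ (right), and I assign these to $\gamma$'s left vertical $w$ and right vertical $v$ respectively; the standard cell side has verticals $v$ (left) and $w$ (right), which are then assigned to $y$ and $x$. Under this relabeling, the theorem's required adjunction hypotheses $x \dashv v$ and $y \dashv w$ become precisely $w \dashv y$ and $v \dashv x$, the two remaining hypotheses of the corollary. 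Reading Theorem~\ref{Thm-Mates}(1) from right to left in this form bijects $\gamma$ with a standard cell whose frame has top $k$, bottom $h$ (the horizontal arrows swap between the two sides of the bijection), left $y$, and right $x$ — precisely $\delta$. Composing the two bijections gives the claimed correspondence $\alpha \leftrightarrow \delta$.

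Part (2) follows by the symmetric argument. With the adjunction directions all interchanged, the hypotheses $x \dashv v$ and $y \dashv w$ now match Theorem~\ref{Thm-Mates}(1) directly, so one obtains a retrocell $\beta$ from $\alpha$; then a re-lettered application of Theorem~\ref{Thm-Mates}(2) from right to left, with $h \dashv f$ and $k \dashv g$ replaced by $f \dashv h$ and $g \dashv k$ after matching the letters to $\beta$'s frame, sends $\beta$ to $\delta$. The diagrammatic form of the bijection is the same as in part (1).

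The only thing to verify, and the main piece of care required, is that at each second step the relabeling of letters puts the adjunction directions demanded by Theorem~\ref{Thm-Mates} in correspondence with those supplied by the hypotheses of the corollary. This is purely bookkeeping: no new calculation on $2$-cells in $\mathcal{V}\mathit{ert}\,\mathbb{A}$ is needed, since the content is already contained in Theorem~\ref{Thm-Mates} and the composability of bijections.
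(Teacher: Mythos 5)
Your proof is correct, and the first step (apply Theorem~\ref{Thm-Mates}(2) to pass from $\alpha$ to the retrocell with frame $h,w,v,k$) coincides with the paper's. Where you diverge is in the second step. The paper regards that retrocell as a \emph{standard} cell of $\mathbb{A}^{ret}$, notes that vertical adjunctions reverse there (so the hypotheses $v\dashv x$, $w\dashv y$ become $x\dashv v$, $y\dashv w$ in $\mathbb{A}^{ret}$), applies Theorem~\ref{Thm-Mates}(1) \emph{inside} $\mathbb{A}^{ret}$, and then uses the involution $(\mathbb{A}^{ret})^{ret}\cong\mathbb{A}$ of Theorem~\ref{Thm-DoubDual} to read the resulting retro-retrocell as the standard cell $\delta$. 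You instead stay in $\mathbb{A}$ throughout: you re-letter Theorem~\ref{Thm-Mates}(1) so that $\delta$ plays the role of the standard cell and the retrocell produced by step one plays the role of the mate, and your bookkeeping of which arrow is left adjoint to which is accurate, so the two bijections do chain as claimed (likewise for part (2), with the roles of the two parts of the theorem exchanged). What your route buys is economy --- no need to know that $\mathbb{A}^{ret}$ has companions, that adjunctions and companions behave well under $(-)^{ret}$, or that $\mathbb{A}^{ret\,ret}\cong\mathbb{A}$; what the paper's route buys is a demonstration of exactly that machinery, which is one of the points of the section. One small caveat: the two constructions give \emph{a priori} possibly different bijections between the same two sets of cells (yours inverts Theorem~\ref{Thm-Mates}(1) in $\mathbb{A}$, the paper's applies it forwards in $\mathbb{A}^{ret}$); since the corollary only asserts existence of a bijection, this does not affect correctness.
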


\begin{proof} (1) We have the following bijections
$$
\bfig\scalefactor{.8}
\square/>`@{>}|{\bb}`@{>}|{\bb}`>/[A`B`C`D;f`v`w`g]

\place(250,250)[{\scriptstyle \alpha}]

\place(800,250)[\longleftrightarrow]

\square(1100,0)/>`@{>}|{\bb}`@{>}|{\bb}`>/[B`A`D`C;h`w`v`k]

\morphism(1450,250)/=>/<-200,0>[`;\beta]

\place(1900,250)[\longleftrightarrow]

\square(2200,0)/>`@{>}|{\bb}`@{>}|{\bb}`>/[D`C`B`A;k`y`x`h]

\place(2450,250)[{\scriptstyle \delta}]

\efig
$$
the first by direct application of part (2) of Theorem \ref{Thm-Mates} and
the second by applying part (1) of Theorem \ref{Thm-Mates} in
$ {\mathbb A}^{ret} $ where $ x \dashv v $ and $ y \dashv w $.
Finally $ \delta $ is a cell in $ ({\mathbb A}^{ret})^{ret} \cong {\mathbb A} $.

\noindent (2) For this we use (1) first and then (2) in $ {\mathbb A}^{ret} $
$$
\bfig\scalefactor{.8}
\square/>`@{>}|{\bb}`@{>}|{\bb}`>/[A`B`C`D;f`v`w`g]

\place(250,250)[{\scriptstyle \alpha}]

\place(800,250)[\longleftrightarrow]

\square(1100,0)/>`@{>}|{\bb}`@{>}|{\bb}`>/[C`D`A`B;g`x`y`f]

\morphism(1450,250)/=>/<-200,0>[`;\gamma]

\place(1900,250)[\longleftrightarrow]

\square(2200,0)/>`@{>}|{\bb}`@{>}|{\bb}`>/[D`C`B`A\rlap{\ .};k`y`x`h]

\place(2450,250)[{\scriptstyle \delta}]

\efig
$$

\end{proof}

Note that the statement of the corollary does not refer to
retrocells or companions but it does not seem possible to
prove it directly without companions. The infamous pinwheel
\cite{DawPar93B} pops up in all attempts to do so.

\section{Coretrocells}

There is a dual situation giving two more bijections in the presence
of right adjoints, but the notion of retrocell is not self-dual. In fact
there is a dual notion, coretrocell, which also comes up in practice
as we will see later.

Like for $ 2 $-categories there are duals op and co for double
categories. $ {\mathbb A}^{op} $
has the horizontal direction reversed and $ {\mathbb A}^{co} $ the
vertical. If $ {\mathbb A} $ has companions there is no reason why
$ {\mathbb A}^{op} $ or $ {\mathbb A}^{co} $ should, and even if they
did there is no relation between the retrocells there and those of
$ {\mathbb A} $. Companions in $ {\mathbb A}^{op} $ or
$ {\mathbb A}^{co} $ correspond to conjoints in $ {\mathbb A} $
and we will use these to define coretrocells.

For completeness we recall the notion of conjoint. More details
can be found in \cite{Gra20}.

\begin{definition}

Let $ f \colon A \to B $ be a horizontal arrow in $ {\mathbb A} $.
A {\em conjoint} for $ f $ is a vertical arrow $ v \colon B \tod A $
together with two cells (conjunctions)
$$
\bfig\scalefactor{.8}
\square/>`=`@{>}|{\bb}`=/[A`B`A`A;f``v`]

\place(250,250)[{\scriptstyle \alpha}]

\place(900,250)[\mbox{and}]

\square(1300,0)/=`@{>}|{\bb}`=`>/[B`B`A`B;`v``f]

\place(1550,250)[{\scriptstyle \beta}]

\efig
$$
such that
$$
\bfig\scalefactor{.8}
\square(0,250)/>`=`@{>}|{\bb}`=/[A`B`A`A;f``v`]

\place(250,500)[{\scriptstyle \alpha}]

\square(500,250)/=``=`>/[B`B`A`B;```f]

\place(750,500)[{\scriptstyle \beta}]

\place(1600,500)[= \mbox{\ \ $\id_f$\quad and\quad}]

\square(2200,0)/>`=`@{>}|{\bb}`=/[A`B`A`A;f``v`]

\place(2450,250)[{\scriptstyle \alpha}]

\square(2200,500)/=`@{>}|{\bb}`=`/[B`B`A`B;`v``]

\place(2450,750)[{\scriptstyle \beta}]

\place(3100,500)[= \mbox{\ \ $1_v$}]

\place(3200,0)[.]

\efig
$$

\end{definition}

As we said, this is the vertical dual of the notion of companion
and therefore has the corresponding properties. They are unique
up to globular isomorphism when they exist and we choose
representation that we call $ f^* $. We have $ (g f)^* \cong f^* \bdot g^* $
and $ 1^*_A \cong \id_A $. The choice is arbitrary but in
practice there is a canonical one and for that $ 1^*_A $ is
usually $ \id_A $, which we will assume.

The dual of sliding is {\em flipping}: we have bijections,
natural in every way that makes sense,
$$
\bfig\scalefactor{.9}
\square(0,250)/`@{>}|{\bb}`@{>}|{\bb}`>/<1000,500>[A`C`D`E;`v`w`h]

\place(500,500)[{\scriptstyle \alpha}]

\morphism(0,750)|a|/>/<500,0>[A`B;f]

\morphism(500,750)|a|/>/<500,0>[B`C;g]

\place(1500,500)[\longleftrightarrow]

\square(2100,0)/>``@{>}|{\bb}`>/<500,1000>[B`C`D`E;g``w`h]

\place(2350,500)[{\scriptstyle \beta}]

\morphism(2100,1000)|l|/@{>}|{\bb}/<0,-500>[B`A;f^*]

\morphism(2100,500)|l|/@{>}|{\bb}/<0,-500>[A`D;v]

\efig
$$
and
$$
\bfig\scalefactor{.8}
\square(0,250)/>`@{>}|{\bb}`@{>}|{\bb}`/<1000,500>[A`B`C`E;f`v`w`]

\place(500,500)[{\scriptstyle \alpha}]

\morphism(0,250)|b|/>/<500,0>[C`D;g]

\morphism(500,250)|b|/>/<500,0>[D`E;h]

\place(1500,500)[\longleftrightarrow]

\square(2100,0)/>`@{>}|{\bb}``>/<500,1000>[A`B`C`D\rlap{\ .};f`v``g]

\place(2350,500)[{\scriptstyle \beta}]

\morphism(2600,1000)|r|/@{>}|{\bb}/<0,-500>[B`E;w]

\morphism(2600,500)|r|/@{>}|{\bb}/<0,-500>[E`D;h^*]

\efig
$$

We now complete Proposition \ref{Prop-AdjComp}.

\begin{proposition}

Assuming only those companions and conjoints mentioned,
we have the following natural bijections
$$
\bfig\scalefactor{.67}
\square(0,250)/`=`=`=/<1000,500>[A`A`A`A;```]
\place(500,500)[{\scriptstyle \alpha_1}]
\morphism(0,750)|a|/>/<500,0>[A`B;f]
\morphism(500,750)|a|/>/<500,0>[B`A;h]

\place(1350,500)[\longleftrightarrow]

\square(1700,250)/>`=`@{>}|{\bb}`=/[A`B`A`A;f``h_*`]
\place(1950,500)[{\scriptstyle \alpha_2}]

\place(2650,500)[\longleftrightarrow]

\square(3100,0)/=`=``=/<500,1000>[A`A`A`A;```]
\place(3350,500)[{\scriptstyle \alpha_3}]
\morphism(3600,1000)|r|/@{>}|{\bb}/<0,-500>[A`B;f_*]
\morphism(3600,500)|r|/@{>}|{\bb}/<0,-500>[B`A;h_*]

\morphism(500,100)/<->/<0,-300>[`;]

\morphism(1950,100)/<->/<0,-300>[`;]

\square(250,-1000)/>`@{>}|{\bb}`=`=/[B`A`A`A;h`f^*``]
\place(500,-750)[{\scriptstyle \alpha_4}]

\place(1250,-750)[\longleftrightarrow]

\square(1700,-1000)/=`@{>}|{\bb}`@{>}|{\bb}`=/[B`B`A`A;`f^*`h_*`]
\place(1950,-750)[{\scriptstyle \alpha_5}]

\morphism(500,-1200)/<->/<0,-300>[`;]

\square(250,-2700)/=``=`=/<500,1000>[A`A`A`A;```]
\place(500,-2200)[{\scriptstyle \alpha_6}]
\morphism(250,-1700)|l|/@{>}|{\bb}/<0,-500>[A`B;h^*]
\morphism(250,-2200)|l|/@{>}|{\bb}/<0,-500>[B`A;f^*]

\square(2850,-3950)/=`=`=`/<1000,500>[B`B`B`B;```]
\place(3350,-3700)[{\scriptstyle \beta_1}]
\morphism(2850,-3950)|b|/>/<500,0>[B`A;h]
\morphism(3350,-3950)|b|/>/<500,0>[A`B;f]
\place(2520,-3700)[\longleftrightarrow]
\square(1700,-3950)/=`@{>}|{\bb}`=`>/[B`B`A`B;`h_*``f]
\place(1950,-3700)[{\scriptstyle \beta_2}]
\place(1200,-3700)[\longleftrightarrow]
\square(250,-4200)/=``=`=/<500,1000>[B`B`B`B;```]
\place(500,-3700)[{\scriptstyle \beta_3}]
\morphism(250,-3200)|l|/@{>}|{\bb}/<0,-500>[B`A;h_*]
\morphism(250,-3700)|l|/@{>}|{\bb}/<0,-500>[A`B;f_*]
\morphism(1950,-2900)/<->/<0,-300>[`;]
\morphism(3350,-2900)/<->/<0,-300>[`;]
\square(3100,-2700)/=`=`@{>}|{\bb}`>/[B`B`B`A;``f^*`h]
\place(3350,-2450)[{\scriptstyle \beta_4}]
\place(2650,-2450)[\longleftrightarrow]
\square(1700,-2700)/=`@{>}|{\bb}`@{>}|{\bb}`=/[B`B`A`A;`h_*`f^*`]
\place(1950,-2450)[{\scriptstyle \beta_5}]
\morphism(3350,-1750)/<->/<0,-300>[`;]
\square(3100,-1550)/=`=``=/<500,1000>[B`B`B`B\rlap{\ .};```]
\place(3350,-1050)[{\scriptstyle \beta_6}]
\morphism(3600,-550)|r|/@{>}|{\bb}/<0,-500>[B`A;f^*]
\morphism(3600,-1050)|r|/@{>}|{\bb}/<0,-500>[A`B;h^*]
%
%
\efig
$$
The following are then equivalent.
\begin{itemize}

	\item[(1)] $ h $ is left adjoint to $ f $ with adjunctions $ \alpha_1 $
	and $ \beta_1 $
	
	\item[(2)] $ h_* $ is a conjoint for $ f $ with conjunctions $ \alpha_2$
	and $ \beta_2 $
	
	\item[(3)] $ f_* $ is left adjoint to $ h_* $ with adjunctions $ \alpha_3 $
	and $ \beta_3 $
	
	\item[(4)] $ f^* $ is a companion for $ h $ with binding cells $ \alpha_4 $
	and $ \beta_4 $
	
	\item[(5)] $ f^* $ is isomorphic to $ h_* $ with inverse isomorphisms
	$ \alpha_5 $ and $ \beta_5 $
	
	\item[(6)] $ f^* $ is left adjoint to $ h^* $ with ajdunctions $ \alpha_6 $
	and $ \beta_6 $.

\end{itemize}

\end{proposition}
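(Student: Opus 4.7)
The plan is to show each of (2)--(6) is equivalent to (1); the case (1) $\Leftrightarrow$ (3) is exactly Proposition \ref{Prop-AdjComp}. The key observation is that each of conditions (1)--(6) consists of a pair of cells satisfying two equations (either triangle identities for an adjunction, binding equations for a companion or conjoint, or invertibility equations for an isomorphism), and the sliding and flipping bijections displayed in the statement carry each pair of defining identities into the pair of identities for the target notion.

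First I would record a naturality lemma: the sliding and flipping bijections are compatible with vertical and horizontal pasting in the sense that pasting cells in the source corresponds, after sliding or flipping, to pasting the transformed cells together with canonical isomorphisms built from $\psi_f,\chi_f$ (for companions) and the conjunctions of conjoints. The essential point is that in any such composite these binding cells appear in cancellable pairs, leaving a clean composite. This is exactly the mechanism used in the proof of Theorem 2.1 to check associativity and interchange for retrocells, where chains of $\psi$'s and $\chi$'s were collapsed via coherence.

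Granted this lemma, each remaining equivalence is immediate. For (1) $\Leftrightarrow$ (2), sliding $h$ into $h_*$ turns the unit $\beta_1$ and counit $\alpha_1$ into cells of the shape of the conjunctions $\alpha_2$, $\beta_2$ for $h_*$ as a conjoint of $f$; applying the naturality lemma to the two triangle identities produces precisely the two conjunction identities. For (1) $\Leftrightarrow$ (4) I flip $f$ into $f^*$, and the triangle identities become the binding identities exhibiting $f^*$ as a companion of $h$. For (4) $\Leftrightarrow$ (5) I slide $h$ into $h_*$: the binding identities for $f^*$ translate into the statement that $\alpha_5$ and $\beta_5$ are mutually inverse globular isomorphisms. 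Finally for (5) $\Leftrightarrow$ (6) I flip $h$ into $h^*$, turning the two invertibility equations into the triangle identities for $f^* \dashv h^*$.

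The main obstacle is the naturality/coherence lemma of the second paragraph. Although conceptually ``just coherence,'' it requires careful diagrammatic bookkeeping in each of the four transformation types (two kinds of sliding and two kinds of flipping) to confirm that, when two cells are pasted and the extra binding or conjunction cells demanded by sliding/flipping are inserted, the resulting diagram collapses to the predicted cell via the companion and conjoint defining equations. This is the same kind of manipulation carried out in the proof of Theorem 2.1, and I would handle it by invoking that pattern rather than writing out all the diagrams in full. Once this lemma is secured, the six equivalences follow by direct substitution, and combining with Proposition \ref{Prop-AdjComp} completes the proof.
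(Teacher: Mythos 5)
The paper itself offers no proof of this proposition --- it is stated immediately after the sliding and flipping bijections and left as an exercise --- so there is nothing to compare line by line; but your strategy (transport each pair of defining equations along the displayed bijections, justified by the naturality of sliding and flipping and the cancellation of binding/conjunction cells in pasted composites) is clearly the intended argument, and it is sound for the links $(1)\Leftrightarrow(2)$, $(1)\Leftrightarrow(3)$ (which is indeed Proposition \ref{Prop-AdjComp}), $(1)\Leftrightarrow(4)$ and $(4)\Leftrightarrow(5)$.

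The one step that does not work as you describe it is $(5)\Leftrightarrow(6)$. The cells $\alpha_5\colon f^*\to h_*$ and $\beta_5\colon h_*\to f^*$ are globular: their horizontal boundaries are identities, so there is no occurrence of $h$ on their boundary to flip into $h^*$, and your stated mechanism simply has nothing to act on. Moreover, deducing $f^*\dashv h^*$ from $f^*\cong h_*$ alone would require the auxiliary fact that $h_*\dashv h^*$, which is not among the data you have set up. The correct link --- and the one the paper's diagram of bijections indicates, via the arrow joining the $\alpha_4$ and $\alpha_6$ squares --- is $(4)\Leftrightarrow(6)$: the binding cells $\alpha_4,\beta_4$ exhibiting $f^*$ as a companion of $h$ do have $h$ on their horizontal boundary, and flipping that $h$ into $h^*$ turns the two binding equations into the triangle identities for $f^*\dashv h^*\,$ (this is the conjoint-side analogue of Proposition \ref{Prop-AdjComp}). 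Since your graph of equivalences already contains $(1)\Leftrightarrow(4)$, replacing your $(5)\Leftrightarrow(6)$ link by $(4)\Leftrightarrow(6)$ repairs the argument without disturbing anything else.
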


\begin{definition}
Suppose that in $ {\mathbb A} $ every horizontal arrow $ f $ has
a conjoint $ f^* $, then a {\em coretrocell}
$$
\bfig\scalefactor{.8}
\square/>`@{>}|{\bb}`@{>}|{\bb}`>/[A`B`C`D;f`v`w`g]

\morphism(260,200)|r|/=>/<0,200>[`;\alpha]

\efig
$$
is a (standard) cell
$$
\bfig\scalefactor{.8}
\square/`@{>}|{\bb}`@{>}|{\bb}`=/[D`A`C`C;`g^*`v`]

\place(250,500)[{\scriptstyle \alpha}]

\square(0,500)/=`@{>}|{\bb}`@{>}|{\bb}`/[B`B`D`A;`w`f^*`]

\efig
$$
in $ {\mathbb A} $.

\end{definition}

Coretrocells are retrocells in $ {\mathbb A}^{co} $. So all
properties of retrocells dualize to coretrocells. In particular
we have a double category $ {\mathbb A}^{cor} $ whose cells
are coretrocells. Dualities can be confusing so we list them
here.

\begin{proposition}
(1) If $ {\mathbb A} $ has conjoints then $ {\mathbb A}^{op} $ and
$ {\mathbb A}^{co} $ have companions and

\noindent (a) $ ({\mathbb A}^{cor})^{op} = ({\mathbb A}^{op})^{ret} $

\noindent (b) $ ({\mathbb A}^{cor})^{co} = ({\mathbb A}^{co})^{ret} $

\vspace{2mm}

\noindent (2) If $ {\mathbb A} $ has companions then $ {\mathbb A}^{op} $ and
$ {\mathbb A}^{co} $ have conjoints and

\noindent (a) $ ({\mathbb A}^{ret})^{op} = ({\mathbb A}^{op})^{cor} $

\noindent (b) $ ({\mathbb A}^{ret})^{co} = ({\mathbb A}^{co})^{cor} $

\vspace{2mm}

\noindent (3) Under the above conditions

\noindent (a) $ ({\mathbb A}^{ret})^{coop} = ({\mathbb A}^{coop})^{ret} $

\noindent (b) $ ({\mathbb A}^{cor})^{coop} = ({\mathbb A}^{coop})^{cor} $.

\end{proposition}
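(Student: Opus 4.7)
The plan is to reduce everything to one clean observation: the $op$ operation (reversing horizontal arrows) and the $co$ operation (reversing vertical arrows) each exchange the defining diagrams for a companion structure with those for a conjoint structure. I would first verify this by comparing the binding-cell shapes under the appropriate reflection of squares; the match is immediate, and this gives the opening clauses of both (1) and (2), as well as ensuring the implicit hypotheses of (3) are satisfied.

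For the equalities (1)(a)--(b) and (2)(a)--(b), both sides are double categories sharing the same objects, the same horizontal arrows (up to the reversal induced by $op$ or $co$), and the same vertical arrows (likewise), so the content lies in identifying the cells and verifying compatibility of composition. Taking (1)(a) as the model case, I would unfold the definitions: a cell of $(\mathbb{A}^{cor})^{op}$ is a coretrocell of $\mathbb{A}$, hence a standard cell of $\mathbb{A}$ with vertical sides $g^* \bdot w$ and $v \bdot f^*$ and horizontal identities on top and bottom; a cell of $(\mathbb{A}^{op})^{ret}$ is a retrocell of $\mathbb{A}^{op}$, and unfolding this with companions of $\mathbb{A}^{op}$ identified with conjoints of $\mathbb{A}$, and remembering that $op$ interchanges the left and right columns of every cell, produces the same standard cell of $\mathbb{A}$. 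Matching composition then falls out: the canonical isomorphism $(hf)_* \cong h_* \bdot f_*$ that governs horizontal composition of retrocells in $\mathbb{A}^{op}$ translates under the $op$/companion dictionary into $(fh)^* \cong h^* \bdot f^*$, which is precisely what governs horizontal composition of coretrocells in $\mathbb{A}$. The other three equalities go through by the same recipe, with $co$ substituted for $op$ and top/bottom substituted for left/right where appropriate.

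Part (3) is then purely formal from (1) and (2). For (3)(a) I would compute
\[
(\mathbb{A}^{ret})^{coop} = ((\mathbb{A}^{ret})^{co})^{op} = ((\mathbb{A}^{co})^{cor})^{op} = ((\mathbb{A}^{co})^{op})^{ret} = (\mathbb{A}^{coop})^{ret},
\]
using (2)(b) for the second step and (1)(a) applied to $\mathbb{A}^{co}$ for the third; (3)(b) is analogous, starting from (1)(b) on the first step.

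The main obstacle will be bookkeeping rather than conceptual: one must track carefully which column (or row) of each cell plays the role of ``left'' (or ``top'') under the various dualities, since this controls the direction of the $2$-cell in ${\cal V}{\it ert}$ (for $op$) or ${\cal H}{\it or}$ (for $co$). Once this dictionary is set out in full and the companion/conjoint identification is pinned down, each of the six equalities amounts to matching two descriptions of the same piece of data, and no genuinely new calculations are required beyond those already performed for retrocells in Sections 2--4.
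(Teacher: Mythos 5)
The paper states this proposition without any proof, treating it as routine bookkeeping, so there is no argument of the author's to compare against; your proposal supplies exactly the verification that is being left implicit, and it is correct. The three key points — that $op$ and $co$ each exchange the companion and conjoint binding-cell diagrams, that unfolding the cell definitions on both sides of each equality yields the same globular cell of $\mathbb{A}$ (e.g.\ $g^*\bdot w \to v\bdot f^*$ for (1)(a)) with the coherence isomorphisms $(hf)_*\cong h_*\bdot f_*$ and $(fh)^*\cong h^*\bdot f^*$ corresponding under the duality, and that (3) follows formally by composing (1) and (2) — are all sound and in the order one would want them.
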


Passing between $ {\mathbb A} $ and $ {\mathbb A}^{co} $ switches
left adjoints to right (both horizontal and vertical), switches companions
and conjoints, and retrocells with coretrocells. Thus we get the dual
theorem for mates.

\begin{theorem}
Assume $ {\mathbb A} $ has conjoints.

(1) If $ x $ is right adjoint to $ v $ and $ y $ right adjoint to $ w $,
then there is a bijection between cells $ \alpha $ and coretrocells
$ \beta $
$$
\bfig\scalefactor{.8}
\square/>`@{>}|{\bb}`@{>}|{\bb}`>/[A`B`C`D;f`v`w`g]

\place(250,250)[{\scriptstyle \alpha}]

\place(900,250)[\longleftrightarrow]

\square(1300,0)/>`@{>}|{\bb}`@{>}|{\bb}`>/[C`D`A`B\rlap{\ .};g`x`y`f]

\morphism(1550,200)|r|/=>/<0,200>[`;\beta]

\efig
$$

(2) If $ h $ is right adjoint to $ f $ and $ k $ right adjoint to $ g $,
then there is a bijection between cells $ \alpha $ and coretrocells
$ \gamma $
$$
\bfig\scalefactor{.8}
\square/>`@{>}|{\bb}`@{>}|{\bb}`>/[A`B`C`D;f`v`w`g]

\place(250,250)[{\scriptstyle \alpha}]

\place(900,250)[\longleftrightarrow]

\square(1300,0)/>`@{>}|{\bb}`@{>}|{\bb}`>/[B`A`D`C\rlap{\ .};g`w`v`f]

\morphism(1550,200)|r|/=>/<0,200>[`;\gamma]

\efig
$$

\end{theorem}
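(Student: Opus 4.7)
The strategy is to deduce both statements from Theorem \ref{Thm-Mates} by transferring the problem to the vertical dual $\mathbb{A}^{co}$. Since $\mathbb{A}$ has conjoints, the preceding proposition ensures that $\mathbb{A}^{co}$ has companions, so Theorem \ref{Thm-Mates} is available there. Under the co-duality, right adjoints (both horizontal and vertical) switch to left adjoints, conjoints and companions trade places, and, by part (1)(b) of the preceding proposition, coretrocells in $\mathbb{A}$ are precisely retrocells in $\mathbb{A}^{co}$. A standard cell in $\mathbb{A}$ corresponds bijectively to a standard cell in $\mathbb{A}^{co}$ with its vertical boundary reversed.

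For part (1), the hypothesis that $x$ is right adjoint to $v$ and $y$ is right adjoint to $w$ in $\mathbb{A}$ translates to $x$ being left adjoint to $v$ and $y$ left adjoint to $w$ in $\mathbb{A}^{co}$. Applying part (1) of Theorem \ref{Thm-Mates} inside $\mathbb{A}^{co}$ then produces a natural bijection between standard cells $\alpha$ of the displayed shape and retrocells in $\mathbb{A}^{co}$ with the appropriate boundary. Reinterpreting the latter as coretrocells in $\mathbb{A}$ yields the claimed bijection with $\beta$. Part (2) proceeds the same way, but using horizontal adjunctions: $h$ right adjoint to $f$ and $k$ right adjoint to $g$ in $\mathbb{A}$ flip to left adjunctions in $\mathbb{A}^{co}$, and part (2) of Theorem \ref{Thm-Mates} applied in $\mathbb{A}^{co}$ supplies the bijection between cells $\alpha$ and coretrocells $\gamma$.

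The main hurdle, if one can call it that, is purely clerical: one must track how the boundary data $(f, g, v, w)$ and the roles of left and right adjoints are permuted by passage to $\mathbb{A}^{co}$ and back, so that the diagrams displayed in the theorem statement line up with those produced by Theorem \ref{Thm-Mates}. No new coherence calculation is needed beyond the dictionary already made explicit in the duality proposition, and in particular no pinwheel obstruction arises, since everything is deduced from a single application of the direct theorem.
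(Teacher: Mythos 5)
Your proposal is correct and matches the paper's own argument: the paper offers no separate proof, deriving the theorem in exactly this way from the sentence preceding it ("Passing between $\mathbb{A}$ and $\mathbb{A}^{co}$ switches left adjoints to right..., switches companions and conjoints, and retrocells with coretrocells"), together with the explicit remark that coretrocells are retrocells in $\mathbb{A}^{co}$. You have simply spelled out the same duality transfer in more detail.
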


Whereas we think of companions as vertical arrows isomorphic
to horizontal ones, it makes sense to think of a cell $ \alpha $
as above as a cell
$$
\bfig\scalefactor{.8}
\square/`@{>}|{\bb}`@{>}|{\bb}`=/[C`B`D`D;`g_*`w`]

\place(250,500)[{\scriptstyle \widehat{\alpha}}]

\square(0,500)/=`@{>}|{\bb}`@{>}|{\bb}`/[A`A`C`B;`v`f_*`]

\efig
$$
(which it corresponds to bijectively) and reversing its direction would
give a natural notion of a cell in the opposite direction, thus giving
retrocells. Coretrocells, on the other hand, are less intuitive. We
think of conjoints as vertical arrows adjoint to horizontal ones, and
although there is a bijection between cells $ \alpha $ and cells
$$
\bfig\scalefactor{.8}
\square/`@{>}|{\bb}`@{>}|{\bb}`=/[A`D`C`C\rlap{\ ,};`v`g^*`]

\place(250,500)[{\scriptstyle \alpha^\vee}]

\square(0,500)/=`@{>}|{\bb}`@{>}|{\bb}`/[B`B`A`D;`f^*`w`]

\efig
$$
this is more in the nature of a proposition than a tautology.
Nevertheless, formally the two bijections are dual, so have
the same status. Reversing the direction of the $ \alpha^\vee $
gives us coretrocells, and they do come up in practice as we will
see in the next sections.

\section{Retrocells for spans and such}

If $ {\bf A} $ is a category with pullbacks, we get a double
category $ {\mathbb S}{\rm pan} {\bf A} $ whose horizontal
part is $ {\bf A} $, whose vertical arrows are spans and whose
cells are span morphisms, modified to account for the
horizontal arrows
$$
\bfig\scalefactor{.8}
\square(0,250)/>`@{>}|{\bb}`@{>}|{\bb}`>/[A`B`C`D;f`S`T`g]

\place(250,500)[{\scriptstyle \alpha}]

\place(900,500)[=]

\square(1300,0)[S`T`C`D\rlap{\ .};\alpha`\sigma_1	`\tau_1`g]

\square(1300,500)/>`<-`<-`/[A`B`S`T;f`\sigma_0`\tau_0`]

\efig
$$
$ {\mathbb S}{\rm pan} {\bf A} $ has companions $ f_* $ and
conjoints $ f^* $:
$$
\bfig\scalefactor{.7}
\place(0,400)[f_*\ \ =]

\morphism(400,400)|r|/>/<0,400>[A`A;1_A]

\morphism(400,400)|r|/>/<0,-400>[A`B;f]

\place(900,400)[\mbox{and}]

\place(1400,400)[f^*\ \ =]

\morphism(1800,400)|r|/>/<0,400>[A`B;f]

\morphism(1800,400)|r|/>/<0,-400>[A`A\rlap{\ \ .};1_A]

\efig
$$

A retrocell $ \beta $ is
$$
\bfig\scalefactor{.8}
\square(0,250)/>`@{>}|{\bb}`@{>}|{\bb}`>/[A`B`C`D;f`S`T`g]

\morphism(350,500)/=>/<-200,0>[`;\beta]

\place(900,500)[=]

\square(1400,0)/>`>`>`=/<650,500>[T \times_B A`S`D`D;\beta`\tau_1 p_1`g \sigma_1`]

\square(1400,500)/=`<-`<-`/<650,500>[A`A`T \times_B A`S;`p_2`\sigma_0`]

\efig
$$
where
$$
\bfig\scalefactor{.8}
\square[T \times_B A`A`T`B;p_2`p_1`f`\tau_0]

\efig
$$
is a pullback.

A coretrocell $ \gamma $ is
$$
\bfig\scalefactor{.8}
\square(0,250)/>`@{>}|{\bb}`@{>}|{\bb}`>/[A`B`C`D;f`S`T`g]

\morphism(250,450)|r|/=>/<0,200>[`;\gamma]

\place(900,500)[=]

\square(1400,0)/>`>`>`=/<550,500>[C \times_D T`S`C`C\rlap{\ .};\gamma`p_1`\sigma_1`]

\square(1400,500)/=`<-`<-`/<550,500>[B`B`C \times_D T`S;`\tau_0 p_2`f \sigma_0`]

\efig
$$

When $ {\bf A} = {\bf Set} $ we can represent an element
$ s \in S $ with $ \sigma_0 s = a $ and $ \sigma_1 s = c $ by
an arrow $ a \todo{s} c $. Then a morphism of spans $ \alpha $
is a function
$$
(a \todo{s} c) \longmapsto (f a \todo{\alpha(s)} g c) .
$$

For a retrocell $ \beta $, an element of $ T \times_B A $ is a pair
$ (b \todo{t} d, a) $ such that $ f a = b $ so we can represent it
as $ f a \todo{t} d $. Then $ \beta $ is a function
$ (f a \todo{t} d) \longmapsto (a \todo{\beta t} \beta_1 t) $ with
$ g \beta_1 t = d $. If we picture $ S $ as lying over $ T $
(thinking of (co)fibrations) then $ \beta $ is a lifting: for every
$ t $ we are given a $ \beta t $
$$
\bfig\scalefactor{.8}
\square/@{>}|{\bb}`--`--`@{>}|{\bb}/[a`\beta_1 t`f a`d;\beta t```t]

\morphism(250,200)/|->/<0,200>[`;]

\morphism(900,0)/--/<0,500>[T`S;]

\efig
$$
So it is like an opfibration but without any of the category
structure around (in particular we cannot say that ``$ \beta t $
is over $ t $'').

For a coretrocell $ \gamma $, an element of $ C \times_D T $ is
a pair $ (c, b \todo{t} d) $ with $ g c = d $ which we can write as
$ b \todo{t} g c $. $ \gamma $ then assigns to such a $ t $ an
$ S $ element $ \gamma_0 t \to^{\gamma t} c $ with
$ f \gamma_0 t = b $, i.e. a lifting from $ T $ to $ S $
$$
\bfig\scalefactor{.8}
\square/@{>}|{\bb}`--`--`@{>}|{\bb}/[\gamma_0 t`c`b`g c\rlap{\ ,};\gamma t```t]

\morphism(250,200)/|->/<0,200>[`;]

\efig
$$
much like a fibration, though without the category structure.

This example shows well the difference between retrocells
and coretrocells and their comparison with actual cells.

The story for relations is much the same. If $ {\bf A} $ is a
regular category and
$$
\bfig\scalefactor{.8}
\square/>`@{>}|{\bb}`@{>}|{\bb}`>/[A`B`C`D;f`R`S`g]

\efig
$$
is a boundary in $ {\mathbb R}{\rm el} {\bf A} $, i.e. $ f $ and $ g $
are morphisms and $ R $ and $ S $ are relations, then in the
internal language of $ A $, there is a (necessarily unique) cell
iff
$$
a \sim_R c \Rightarrow f a \sim_S g c\ ,
$$
there is a retrocell iff
$$
f a \sim_S d \Rightarrow \exists c (a \sim_R c \wedge g c = d)
$$
and a coretrocell iff
$$
b \sim_S g c \quad\Rightarrow\quad \exists a (a \sim_R c \wedge f a = b) .
$$

Profunctors are the relations of the $ \Cat $ world. There is a double
category which we call $ \CCat $ whose objects are small categories,
horizontal arrows functors, vertical arrows profunctors, and cells
the appropriate natural transformations. In a typical cell
$$
\bfig\scalefactor{.8}
\square/>`@{>}|{\bb}`@{>}|{\bb}`>/[{\bf A}`{\bf B}`{\bf C}`{\bf D};F`P`Q`G]

\place(250,250)[{\scriptstyle t}]

\efig
$$
$ t $ is a natural transformation $ P (-, =) \ \to \  Q (F -, G =) $. $ \CCat $
has companions and conjoints:
$$
F_* (A, B) = {\bf B} (FA, B)
$$
$$
F^* (B, A) = {\bf B} (B, FA) .
$$
We denote an element $ p \in P (A, C) $ by an arrow
$ p \colon A \tod C $. So the action of $ t $ is
$$
t \colon (p \colon A \tod C) \longmapsto (t p \colon FA \tod G C)
$$
natural in $ A $ and $ C $, of course.

A retrocell
$$
\bfig\scalefactor{.8}
\square/>`@{>}|{\bb}`@{>}|{\bb}`>/[{\bf A}`{\bf B}`{\bf C}`{\bf D};F`P`Q`G]

\morphism(350,250)/=>/<-200,0>[`;\phi]

\efig
$$
is a natural transformation $ \phi \colon Q \otimes_{\bf B} F_* \to
G_* \otimes_{\bf C} P $. An element of $ Q \otimes_{\bf B} F_* (A, D) $
is an element of $ Q (FA, D) $, $ g \colon FA \tod D $. An element of
$ G_* \otimes_{\bf C} P (A, D) $ is an equivalence class
$$
[p \colon A \tod C, d \colon GC \to D]_C .
$$
So a retrocell assigns to each element of $ Q $, $ q \colon FA \to D $,
an equivalence class
$$
[\phi (q) \colon A \tod C, \ov{\phi} (q) \colon GC \to D] .
$$
We can think of it as a lifting, like for spans
$$
\bfig\scalefactor{.8}
\square/@{>}|{\bb}`--``@{>}|{\bb}/<600,800>[A`C`FA`D\rlap{\ .};\phi(q)```q]

\morphism(600,800)/--/<0,-400>[C`GC;]

\morphism(600,400)/>/<0,-400>[GC`D;]

\morphism(300,250)/|->/<0,400>[`;]

\efig
$$
The lifting $ C $ does not lie over $ D $, there is merely a
comparison $ GC \to D $. Furthermore the lifting is not
unique, but two liftings are connected by a zigzag of
$ {\bf C} $ morphisms. We have not spelled out the details
because we do not know of any occurrences of these
retrocells in print.

Coretrocells of profunctors are similar (dual). We get a
``lifting''
$$
\bfig\scalefactor{.8}
\square/@{>}|{\bb}``--`@{>}|{\bb}/<600,800>[A`C`B`GC\rlap{\ .};p```q]

\morphism(0,800)/--/<0,-400>[A`FA;]

\morphism(0,400)/<-/<0,-400>[FA`B;]

\morphism(300,250)/|->/<0,400>[`;]

\efig
$$

A final variation on the span theme is $ {\bf V} $-matrices. Let
$ {\bf V} $ be a monoidal category with coproducts preserved
by $ \otimes $ in each variable separately. There is associated
a double category which we call $ {\bf V} $-$ {\mathbb S}{\rm et} $. Its
objects are sets and horizontal arrows functions. A vertical arrow
$ A \tod C $ is an $ A \times C $ matrix of objects of $ {\bf V} $,
$ [V_{ac}] $. A cell is a matrix of morphisms
$$
\bfig\scalefactor{.8}
\square/>`@{>}|{\bb}`@{>}|{\bb}`>/[A`B`C`D;f`{[}V_{ac}{]}`{[}W_{bd}{]}`g]

\place(250,250)[{\scriptstyle {[}\alpha_{ac}{]}}]

\efig
$$
$$
\alpha_{ac} \colon V_{ac} \to W_{fa, gc} .
$$
Vertical composition is matrix multiplication
$$
[X_{ce}] \otimes [V_{ac}] = [\sum_{c\in C} X_{ce} \otimes V_{ac}] .
$$
Every horizontal arrow has a companion
$$
f_* = [\Delta_{fa, b}]
$$
and a conjoint
$$
f^* = [\Delta_{b, fa}]
$$
where $ \Delta $ is the ``Kronecker delta''
$$
\Delta_{b, b'} = 
\left\{ \begin{array}{lll}
I & \mbox{if} & b = b'\\
0 & \mbox{if} & b \neq b'.
\end{array}
\right.
$$

A retrocell
$$
\bfig\scalefactor{.8}
\square/>`@{>}|{\bb}`@{>}|{\bb}`>/[A`B`C`D;f`{[}V_{ac}{]}`{[}W_{bd}{]}`g]

\morphism(340,240)/=>/<-200,0>[`;\phi]

\efig
$$
is an $ A \times D $ matrix $ [\phi_{ad}] $
$$
\phi_{ad} \colon W_{fa, d} \to \sum_{gc = d} V_{ac} \ .
$$
A coretrocell
$$
\bfig\scalefactor{.8}
\square/>`@{>}|{\bb}`@{>}|{\bb}`>/[A`B`C`D;f`{[}V_{ac}{]}`{[}W_{bd}{]}`g]

\morphism(250,200)|r|/=>/<0,200>[`;\psi]

\efig
$$
is a $ B \times C $ matrix $ [\psi_{bc}] $
$$
\psi_{bc} \colon W_{b, gc} \to \sum_{fa = b} V_{ac} \ .
$$
For example, if $ {\bf V} = {\bf Ab} $, and we again represent
elements of $ V_{ac} $ by arrows $ a \todo{v} c $ (resp. of
$ W_{bd} $ by $ b \todo{w} d $), then $ \phi $ associates to each
$ f a \todo{w} d $ a finite number of elements $ a \todo{v_i} c_i $
with $ g c_i = d $
$$
\bfig\scalefactor{.8}
\square/@{>}|{\bb}`--`--`@{>}|{\bb}/[a`c_i`f a`d;v_i```w]

\morphism(250,200)/|->/<0,200>[`;]

\place(900,250)[(i = 1, ..., n)\ .]

\efig
$$
Of course the dual situation holds for coretrocells $ \psi $.

So we see that (co)retrocells in each case give liftings but
of a type adapted to the situation. For spans they are
uniquely specified, for relations they exist but are not
specified, for profunctors only up to a connectedness
condition and for matrices of Abelian groups we get a finite
number of them.

\section{Monads}

A monad in $ \Cat $ is a quadruple $ ({\bf A}, T, \eta, \mu) $ where
$ {\bf A} $ is a category, $ T \colon {\bf A} \to {\bf A} $ an endo\-functor,
$ \eta \colon 1_{\bf A} \to T $ and $ \mu \colon T^2 \to T $ natural
transformations satisfying the well-known unit and associativity laws.
In \cite{Str72} Street introduced morphisms of monads
$$
(F, \phi) \colon ({\bf A}, T, \eta, \mu) \to ({\bf B}, S, \kappa, \nu)
$$
as functors $ F \colon {\bf A} \to {\bf B} $ together with a natural
transformation
$$
\bfig\scalefactor{.8}
\square[{\bf A}`{\bf B}`{\bf A}`{\bf B};F`T`S`F]

\morphism(340,320)/=>/<-140,-140>[`;\phi]

\efig
$$
respecting units and multiplications in the obvious way. He
called these monad functors, now called lax monad morphisms
(see \cite{Lei04}). This was done, not just in $ \Cat $, but in a
general $ 2 $-category. Using duality, he also considered
what he called monad opfunctors, i.e. oplax morphisms of
monads, with the $ \phi $ in the opposite direction.

The lax morphisms work well with Eilenberg-Moore algebras,
giving a functor
$$
{\bf EM} (F, \phi) \colon {\bf EM} ({\mathbb T}) \to {\bf EM} ({\mathbb S})
$$
$$
(TA \to^a A) \longmapsto (SFA \to^{\phi A} FTA \to^{Fa} FA)
$$
whereas the oplax ones give functors on the Kleisli categories
$$
{\bf Kl} (F, \psi) \colon {\bf Kl} ({\mathbb T}) \to {\bf Kl} ({\mathbb S})
$$
$$
(A \to^f TB) \longmapsto (FA \to^{Ff}  FTB \to^{\psi B} SFB) .
$$

The story for monads in a double category is this (see \cite{FioGamKoc11, FioGamKoc12}, though
note that there horizontal and vertical are reversed). In general
we just get one kind of morphism, the oplax ones. If we have
companions then we also get the lax ones, and if we also have
conjoints we have another kind. The $ 2 $-category case
considered by Street corresponds to the double category of
coquintets which has companions but not conjoints.

Let $ {\mathbb A} $ be a double category. A vertical {\em monad}
in $ {\mathbb A} $, $ t = (A, t, \eta, \mu) $ consists of an object
$ A $, a vertical endomorphism $ t $ and two cells $ \eta $ and
$ \mu $ as below
$$
\bfig\scalefactor{.8}
\square(0,250)/=`=`@{>}|{\bb}`=/[A`A`A`A;``t`]

\place(250,500)[{\scriptstyle \eta}]

\square(1100,0)/=``@{>}|{\bb}`=/<500,1000>[A`A`A`A;``t`]

\place(1350,500)[{\scriptstyle \mu}]

\morphism(1100,1000)/@{>}|{\bb}/<0,-500>[A`A;t]

\morphism(1100,500)/@{>}|{\bb}/<0,-500>[A`A;t]

\efig
$$
satisfying
$$
\bfig\scalefactor{.8}
\square/=`@{>}|{\bb}`@{>}|{\bb}`=/[A`A`A`A;`t`t`]

\place(250,250)[{\scriptstyle =}]

\square(0,500)/=`=`@{>}|{\bb}`/[A`A`A`A;``t`]

\place(250,750)[{\scriptstyle \eta}]

\square(500,0)/=``@{>}|{\bb}`=/<500,1000>[A`A`A`A;``t`]

\place(750,500)[{\scriptstyle \mu}]

\place(1300,500)[=]

\square(1600,250)/=`@{>}|{\bb}`@{>}|{\bb}`=/[A`A`A`A;`t`t`]

\place(1850,500)[{\scriptstyle 1_t}]

\place(2400,500)[=]

\square(2700,0)/=`=`@{>}|{\bb}`=/[A`A`A`A;``t`]

\place(2950,250)[{\scriptstyle \eta}]

\square(2700,500)/=`@{>}|{\bb}`@{>}|{\bb}`/[A`A`A`A;`t`t`]

\place(2950,750)[{\scriptstyle =}]

\square(3200,0)/=``@{>}|{\bb}`=/<500,1000>[A`A`A`A;``t`]

\place(3450,500)[{\scriptstyle \mu}]

\efig
$$
$$
\bfig\scalefactor{.8}
\square/=`@{>}|{\bb}`@{>}|{\bb}`=/[A`A`A`A;`t`t`]

\place(250,250)[{\scriptstyle =}]

\square(0,500)/=``@{>}|{\bb}`/<500,1000>[A`A`A`A;``t`]

\place(250,1000)[{\scriptstyle \mu}]

\morphism(0,1500)/@{>}|{\bb}/<0,-500>[A`A;t]

\morphism(0,1000)/@{>}|{\bb}/<0,-500>[A`A;t]

\square(500,0)/=``@{>}|{\bb}`=/<500,1500>[A`A`A`A;``t`]

\place(750,750)[{\scriptstyle \mu}]

\place(1500,750)[=]

\square(2000,0)/=``@{>}|{\bb}`=/<500,1000>[A`A`A`A;``t`]

\place(2250,500)[{\scriptstyle \mu}]

\square(2000,1000)/=`@{>}|{\bb}`@{>}|{\bb}`/[A`A`A`A;`t`t`]

\place(2250,1250)[{\scriptstyle =}]

\morphism(2000,1000)/@{>}|{\bb}/<0,-500>[A`A;t]

\morphism(2000,500)/@{>}|{\bb}/<0,-500>[A`A;t]

\square(2500,0)/=``@{>}|{\bb}`=/<500,1500>[A`A`A`A\rlap{\ .};``t`]

\place(2750,750)[{\scriptstyle \mu}]

\efig
$$

A (horizontal) {\em morphism of monads} $ (f, \psi) \colon (A, t, \eta, \mu)
\to (B, s, \kappa, \nu) $ consists of a horizontal arrow $ f $ and a cell
$ \psi $ as below, such that
$$
\bfig\scalefactor{.8}
\square/=`=`@{>}|{\bb}`=/[A`A`A`A;``t`]

\place(250,250)[{\scriptstyle \eta}]

\square(500,0)/>``@{>}|{\bb}`>/[A`B`A`B;f``s`f]

\place(750,250)[{\scriptstyle \psi}]

\place(1300,250)[=]

\square(1600,0)/>`=`=`>/[A`B`A`B;f```f]

\place(1850,250)[{\scriptstyle \id_f}]

\square(2100,0)/=``@{>}|{\bb}`=/[B`B`B`B;``s`]

\place(2350,250)[{\scriptstyle \kappa}]

\efig
$$
and
$$
\bfig\scalefactor{.8}
\square/=``@{>}|{\bb}`=/<500,1000>[A`A`A`A;``t`]

\place(250,500)[{\scriptstyle \mu}]

\morphism(0,1000)/@{>}|{\bb}/<0,-500>[A`A;t]

\morphism(0,500)/@{>}|{\bb}/<0,-500>[A`A;t]

\square(500,0)/>``@{>}|{\bb}`>/<500,1000>[A`B`A`B;f``s`f]

\place(750,500)[{\scriptstyle \psi}]

\place(1400,500)[=]

\square(1800,0)/>`@{>}|{\bb}`@{>}|{\bb}`>/[A`B`A`B;f`t`s`f]

\place(2050,250)[{\scriptstyle \psi}]

\square(1800,500)/>`@{>}|{\bb}`@{>}|{\bb}`/[A`B`A`B;f`t`s`]

\place(2050,750)[{\scriptstyle \psi}]

\square(2300,0)/=``@{>}|{\bb}`=/<500,1000>[B`B`B`B\rlap{\ .};``s`]

\place(2550,500)[{\scriptstyle \nu}]

\efig
$$
These are the oplax morphisms referred to above.

There are also vertical morphisms of monads, ``bimodules'', whose
composition requires certain well-behaved coequalizers. They are
interesting (see e.g. \cite{Shu08}), of course, but will not concern us here.

If $ {\mathbb A} $ has companions we can also define
retromorphisms of monads. (See \cite{Cla22, DiM22}.)

\begin{definition}
A {\em retromorphism of monads} $ (f, \phi) \colon (A, t, \eta, \mu)
\to (B, s, \kappa, \nu) $ consists of a horizontal arrow $ f $ and a
retrocell $ \phi $
$$
\bfig\scalefactor{.8}
\square/>`@{>}|{\bb}`@{>}|{\bb}`>/[A`B`A`B;f`t`s`f]

\morphism(350,250)/=>/<-200,0>[`;\phi]

\efig
$$
satisfying
$$
\bfig\scalefactor{.8}
\square/=`=`>`=/[B`B`B`B;``s`]

\place(250,250)[{\scriptstyle \kappa}]

\square(0,500)/=`@{>}|{\bb}`@{>}|{\bb}`/[A`A`B`B;`f_*`f_*`]

\place(250,750)[{\scriptstyle =}]

\square(500,0)/``>`=/[B`A`B`B;``f_*`]

\square(500,500)/=``@{>}|{\bb}`/[A`A`B`A;``t`]

\place(750,500)[{\scriptstyle \phi}]

\place(1500,500)[=]

\square(2000,0)/=`@{>}|{\bb}`@{>}|{\bb}`=/[A`A`B`B;`f_*`f_*`]

\place(2250,250)[{\scriptstyle =}]

\square(2000,500)/=`=`@{>}|{\bb}`/[A`A`A`A;``t`]

\place(2250, 750)[{\scriptstyle \eta}]

\efig
$$

$$
\bfig\scalefactor{.8}
\square/=``@{>}|{\bb}`=/<500,1000>[B`B`B`B;``s`]

\place(250,500)[{\scriptstyle \nu}]

\morphism(0,1000)/@{>}|{\bb}/<0,-500>[B`B;s]

\morphism(0,500)/@{>}|{\bb}/<0,-500>[B`B;s]

\square(0,1000)/=`@{>}|{\bb}`@{>}|{\bb}`/[A`A`B`B;`f_*`f_*`]

\place(250,1250)[{\scriptstyle =}]

\morphism(500,1500)/=/<500,0>[A`A;]

\morphism(1000,1500)|r|/@{>}|{\bb}/<0,-1000>[A`A;t]

\morphism(1000,500)|r|/@{>}|{\bb}/<0,-500>[A`B;f_*]

\morphism(500,0)/=/<500,0>[B`B;]

\place(750,750)[{\scriptstyle \phi}]

\place(1300,750)[=]

\square(1700,0)/=`@{>}|{\bb}`@{>}|{\bb}`=/[B`B`B`B;`s`s`]

\place(1950,250)[{\scriptstyle =}]

\square(1700,500)/`@{>}|{\bb}`@{>}|{\bb}`/[B`A`B`B;`s`f_*`]

\square(1700,1000)/=`@{>}|{\bb}`@{>}|{\bb}`/[A`A`B`A;`f_*`t`]

\place(1950,1250)[{\scriptstyle \phi}]

\square(2200,0)/``@{>}|{\bb}`=/[B`A`B`B;``f_*`]

\square(2200,500)/``@{>}|{\bb}`/[A`A`B`A;``t`]

\place(2450,500)[{\scriptstyle \phi}]

\square(2200,1000)/=``@{>}|{\bb}`=/[A`A`A`A;``t`]

\place(2450,1250)[{\scriptstyle =}]

\square(2700,0)/=``@{>}|{\bb}`=/[A`A`B`B;``f_*`]

\square(2700,500)/=``@{>}|{\bb}`/<500,1000>[A`A`A`A;``t`]

\place(2950,1000)[{\scriptstyle \mu}]

\efig
$$

\end{definition}

\begin{proposition}
The identity retrocell is a retromorphism $ (A, t, \eta, \mu) \to
(A, t, \eta, \mu) $. The composite of two retromorphisms of
monads is again one.

\end{proposition}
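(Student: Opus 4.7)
The plan for the identity case: take $f = 1_A$ and $\phi = 1_t$, the identity retrocell on $t$. Under the standing convention $(1_A)_* = \id_A$ with strict binding cells, every occurrence of $f_*$ in the two retromorphism axioms will collapse to a vertical identity arrow and every companion corner-bracket will collapse to the identity cell on $\id_A$. The four diagrams defining the two axioms should then reduce, after these collapses, to $\eta$ framed by horizontal identity cells on both sides of the first axiom, and to $\mu$ framed by horizontal identity cells on both sides of the second, so both axioms hold by a routine unravelling. This uses only the coherence conventions established in Section 1.

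For composition, I would let $(f,\phi)\colon (A,t,\eta,\mu)\to (B,s,\kappa,\nu)$ and $(g,\psi)\colon (B,s,\kappa,\nu)\to (C,r,\iota,\xi)$ be retromorphisms and set $(g,\psi)\circ(f,\phi) := (gf,\,\psi\phi)$, where $\psi\phi$ denotes the horizontal composite of retrocells as defined in the proof that $\mathbb{A}^{ret}$ is a double category. The plan is to verify each of the two retromorphism axioms for $(gf,\psi\phi)$ by pasting the corresponding axioms for $(f,\phi)$ and $(g,\psi)$. For the unit axiom, I would begin with its left-hand side: $\iota$ stacked atop the retrocell $\psi\phi$ and bordered by copies of $(gf)_*$. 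Expanding $\psi\phi$ via its defining composite inserts the canonical isomorphism $(gf)_* \cong g_* \bdot f_*$, separating a $\psi$-region (passing through $B$) from a $\phi$-region (passing through $A$). Applying the unit axiom for $(g,\psi)$ then rewrites the $\iota$-and-$\psi$ subdiagram as a $\kappa$-and-identities subdiagram sitting atop the $\phi$-region; applying the unit axiom for $(f,\phi)$ rewrites that $\kappa$-and-$\phi$ subdiagram as an $\eta$-and-identities subdiagram. A final invocation of the coherence identities for $\psi_{(-)}$ and $\chi_{(-)}$ reassembles the result into the right-hand side of the unit axiom for $(gf,\psi\phi)$. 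The multiplication axiom will proceed analogously, with $\psi\phi$ decomposed on both the upper and lower copies of $(gf)_*$ and the constituent multiplication axioms applied in turn to transport $\xi$ at $C$ first to $\nu$ at $B$ and then to $\mu$ at $A$.

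The hard part is purely combinatorial: the diagrams involved are sizeable, and so is the bookkeeping of canonical isomorphisms. The calculation is entirely formal and parallels the verification of associativity of horizontal composition of retrocells earlier in the paper. As in that proof, I would present the argument schematically as a block-array of cells, noting that the canonical isomorphisms $\psi_{gf}, \chi_{gf}$ introduced by the decomposition $(gf)_* \cong g_* \bdot f_*$ cancel in matched pairs with their inverses inside the pasting, leaving only the constituent axioms for $(f,\phi)$ and $(g,\psi)$ together with the unit, associativity, and interchange coherence of $\mathbb{A}$. The two sides of each axiom thereby reduce to the same schematic diagram, which establishes the claim.
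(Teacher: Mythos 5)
Your proposal is correct and is precisely the ``easy calculation'' that the paper's proof leaves to the reader: the identity case collapses via the conventions $(1_A)_*=\id_A$ and strict vertical identities, and the composite case follows by expanding $\psi\phi$ through $(gf)_*\cong g_*\bdot f_*$, applying the two constituent axioms in turn, and cancelling the canonical isomorphisms by coherence. No substantive difference from the paper's (unstated) argument.
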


\begin{proof}
Easy calculation.

\end{proof}

For a monad $ t = (A, t, \eta, \mu) $, Kleisli is a colimit construction,
a universal morphism of the form
$$
(A, t, \eta, \mu) \to (X, \id_X, 1, 1) .
$$
\begin{definition} The {\em Kleisli object} of a vertical monad in a
double category, if it exists, is an object $ Kl(t) $, a horizontal arrow $ f $ and
a cell
$$
\bfig\scalefactor{.8}
\square/>`@{>}|{\bb}`=`>/[A`Kl(t)`A`Kl(t);f`t``f]

\place(250,250)[{\scriptstyle \pi}]

\efig
$$
such that

\noindent (1)
$$
\bfig\scalefactor{.8}
\square/=`=`@{>}|{\bb}`=/[A`A`A`A;``t`]

\place(250,250)[{\scriptstyle \eta}]

\square(500,0)/>``=`>/[A`Kl(t)`A`Kl(t);f```f]

\place(750,250)[{\scriptstyle \pi}]

\place(1500,250)[=]

\square(2000,0)/>`=`=`>/[A`Kl(t)`A`Kl(t);f```f]

\place(2250,250)[{\scriptstyle \id_f}]

\efig
$$

\noindent (2)
$$
\bfig\scalefactor{.8}
\square/=``@{>}|{\bb}`=/<500,1000>[A`A`A`A;``t`]

\place(250,500)[{\scriptstyle \mu}]

\morphism(0,1000)/@{>}|{\bb}/<0,-500>[A`A;t]

\morphism(0,500)/@{>}|{\bb}/<0,-500>[A`A;t]

\square(500,0)/>``=`>/<500,1000>[A`Kl(t)`A`Kl(t);f```f]

\place(750,500)[{\scriptstyle \pi}]

\place(1500,500)[=]

\square(2000,0)/>`@{>}|{\bb}`=`>/[A`Kl(t)`A`Kl(t);f`t``f]

\place(2250,250)[{\scriptstyle \pi}]

\square(2000,500)/>`@{>}|{\bb}`=`/[A`Kl(t)`A`Kl(t);f`t``]

\place(2250,750)[{\scriptstyle \pi}]

\square(2500,0)/=``=`=/<600,1000>[Kl(t)`Kl(t)`Kl(t)`Kl(t);```]

\place(2850,500)[{\scriptstyle \cong}]

\efig
$$
and universal with those properties. That is, for any
$$
\bfig\scalefactor{.8}
\square/>`@{>}|{\bb}`=`>/[A`B`A`B;X`t``X]

\place(250,250)[{\scriptstyle \xi}]

\efig
$$
such that (1) $ \xi \eta = \id $ and (2) $ \xi \mu = \xi \cdot \xi $,
there exists a unique $ h \colon Kl(t) \to B $ such that (1) $ h f = x $
and (2) $ h \pi = \xi $.
\end{definition}

Just by universality, if we have a morphism of monads $ (h, \psi) \colon
(A, t, \eta, \mu) \to (B, s, \kappa, \nu) $ and  the Kleisli objects $ Kl(t) $
and $ Kl(s) $ exist, we get a horizontal arrow $ Kl(h, \psi) $ such that
$$
\bfig\scalefactor{.8}
\square[A`Kl(t)`B`Kl(s)\rlap{\ .};f`h`Kl(h, \psi)`g]

\efig
$$

This does not work for Eilenberg-Moore objects. Asking for a universal
morphism of the form
$$
(X, \id_X, 1, 1) \to (A, t, \eta, \mu)
$$
is not the right thing as can be seen from the usual $ \Cat $ example,
but also in general. For such a morphism $ (u, \theta) $, the unit law
says
$$
\bfig\scalefactor{.8}
\square/=`=`=`=/[X`X`X`X;```]

\place(250,250)[{\scriptstyle 1_{\id_X}}]

\square(500,0)/>``@{>}|{\bb}`>/[X`A`X`A;u``t`u]

\place(750,250)[{\scriptstyle \theta}]

\place(1500,250)[=]

\square(2000,0)/>`=`=`>/[X`A`X`A;u```u]

\place(2250,250)[{\scriptstyle \id_u}]

\square(2500,0)/=``@{>}|{\bb}`=/[A`A`A`A;``t`]

\place(2750,250)[{\scriptstyle \eta}]

\efig
$$
i.e. $ \theta $ must be $ \eta u $ and this is a morphism. Thus monad
morphisms $ (u, \theta) $ are in bijection with horizontal arrows
$ X \to A $. The universal such is $ 1_A $, i.e. we get
$$
\bfig\scalefactor{.8}
\square/>`=`@{>}|{\bb}`>/[A`A`A`A;1_A``t`1_A]

\place(250,250)[{\scriptstyle \eta}]

\efig
$$
not the Eilenberg-Moore object.

\begin{definition} The {\em Eilenberg-Moore object} of a vertical monad
$ (A, t, \eta, \mu) $ is the universal retromorphism
of monads
$$
(X, \id_X, 1, 1) \to^{(u, \theta)} (A, t, \eta, \mu)
$$
$$
\bfig\scalefactor{.8}
\square/>`=`@{>}|{\bb}`>/[X`A`X`A\rlap{\ .};u``t`u]

\morphism(350,250)/=>/<-200,0>[`;\theta]

\efig
$$

\end{definition}

\begin{proposition}

Let $ \cal{A} $ be a $ 2 $-category and $ (A, t, \eta, \mu) $ a monad
in $ \cal{A} $. Then $ (A, t, \eta, \mu) $ is also a monad in the double
category of coquintets $ {\rm co}{\mathbb Q}{\cal{A}} $, and a
retromorphism
$$
(u, \theta) \colon (X, \id_X, 1, 1) \to (A, t, \eta, \mu)
$$
is a $ 1 $-cell $ u \colon X \to A $ and a $ 2 $-cell $ \theta \colon t u \to u $
in $ \cal{A} $ satisfying the unit and associativity laws for a $ t $-algebra.
The universal such is the Eilenberg-Moore object for $ t $.

\end{proposition}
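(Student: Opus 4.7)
The plan is to work in three stages: identify the monad structure in ${\rm co}{\mathbb Q}{\cal A}$, unpack a retromorphism as a $t$-algebra, and then appeal to universality.

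For the first stage, recall from the example in Section 2 that $({\mathbb Q}{\cal A})^{ret}={\rm co}{\mathbb Q}{\cal A}={\mathbb Q}({\cal A}^{co})$. Thus a vertical arrow in ${\rm co}{\mathbb Q}{\cal A}$ is a $1$-cell of $\cal A$, and a cell with boundary $(f,v,w,g)$ in the top, left, right, bottom slots corresponds to a coquintet, that is, a $2$-cell $gv\to wf$ in $\cal A$. Reading the boundaries of the $\eta$ and $\mu$ cells prescribed by the general definition of a vertical monad, these become $2$-cells $\eta\colon 1_A\to t$ and $\mu\colon t^2\to t$ in $\cal A$, and the double-categorical pasting axioms for unit and associativity translate directly to the standard monad axioms. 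So a vertical monad in ${\rm co}{\mathbb Q}{\cal A}$ on $A$ with endo $t$ is precisely a monad in $\cal A$ on $t$.

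For the second stage, a retromorphism $(u,\phi)\colon(X,\id_X,1,1)\to(A,t,\eta,\mu)$ consists of a horizontal $u\colon X\to A$ together with a retrocell $\phi$ in ${\rm co}{\mathbb Q}{\cal A}$ of boundary $(u,\id_X,t,u)$. By Theorem~\ref{Thm-DoubDual}(2) applied to ${\mathbb Q}{\cal A}$ one has $({\rm co}{\mathbb Q}{\cal A})^{ret}\cong{\mathbb Q}{\cal A}$, so $\phi$ corresponds to an ordinary quintet $2$-cell, namely $\theta\colon tu\to u$ in $\cal A$. One must then check that the two retromorphism axioms, unpacked through the same identification, reduce to the $t$-algebra axioms. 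The source monad being trivial --- $(\id_X)_*=\id_X$ and its $\eta,\mu$ are identity cells --- causes the multi-cell pasting diagrams to collapse: the unit axiom yields $\theta\circ\eta u=1_u$ and the multiplication axiom yields $\theta\circ t\theta=\theta\circ\mu u$, which are exactly the unit and associativity laws for a $t$-algebra. This translation is the main obstacle: each retromorphism axiom is a sizeable pasting of cells involving companions, $\phi$, and the monad data, and one has to track carefully how the companion-generated canonical isomorphisms degenerate to identities under the quintet identification.

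Finally, universality is essentially tautological. By the correspondence just established, a universal retromorphism from the trivial monad into $(A,t,\eta,\mu)$ is the same thing as a universal pair $(u,\theta)$ with $u\colon X\to A$ in $\cal A$ and $\theta\colon tu\to u$ satisfying the $t$-algebra laws, which is the defining property of the Eilenberg--Moore object $A^t$ of $t$ in the $2$-category $\cal A$.
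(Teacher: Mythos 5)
Your proposal is correct and follows the same route as the paper, whose entire proof is the one-line remark that this is ``merely a question of interpreting the definition of retromorphism in $ {\rm co}{\mathbb Q}{\cal{A}} $''; you simply carry out that interpretation explicitly, using $ ({\mathbb Q}{\cal{A}})^{ret} = {\rm co}{\mathbb Q}{\cal{A}} $ and the double-dual identification to read the retrocell as a quintet $ \theta \colon t u \to u $ and to collapse the two retromorphism axioms to the $ t $-algebra laws. The added detail is welcome but not a different argument.
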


\begin{proof}

This is merely a question of interpreting the definition of
retromorphism in $ {\rm co}{\mathbb Q}{\cal{A}} $.

\end{proof}

We now see immediately how a retrocell
$ (f, \phi) \colon (A, t, \eta, \mu) \to (B, s, \kappa, \nu) $ produces, by
universality, a horizontal arrow
$$
\bfig\scalefactor{.8}
\square<850,500>[EM(t)`EM(s)`A`B\rlap{\ .};EM(f, \phi)`u`u'`f]

\efig
$$

\begin{example}\rm
Let $ \cal{A} $ be a $ 2 $-category and $ {\mathbb Q}{\cal{A}} $ the
double category of quintets in $ \cal{A} $. Recall that a cell in $ {\mathbb Q}{\cal{A}} $
is a quintet in $ \cal{A} $
$$
\bfig\scalefactor{.8}
\square[A`B`C`D\rlap{\ .};f`h`k`g]

\morphism(330,330)/=>/<-140,-140>[`;\alpha]

\efig
$$
Every horizontal arrow $ f $ has a companion, namely $ f $ itself
but viewed as a vertical arrow. A (vertical) monad in $ {\mathbb Q}{\cal{A}} $
is a comonad in $ \cal{A} $. A morphism of monads in $ {\mathbb Q}{\cal{A}} $
is then a lax morphism of comonads, and a retromorphism of monads in
$ {\mathbb Q}{\cal{A}} $ is an oplax morphism of comonads in $ \cal{A} $.

To make the connection with Street's monad functors and opfunctors,
we must take coquintets (the $ \alpha $ in the opposite direction)
$ {\rm co}{\mathbb Q}{\cal{A}} $. Now a monad in $ {\rm co}{\mathbb Q}{\cal{A}} $
is a monad in $ \cal{A} $, a monad morphism in $ {\rm co}{\mathbb Q}{\cal{A}} $
is an oplax morphism of monads, i.e. a monad opfunctor in $ \cal{A} $,
whereas a retromorphism of monads is now a lax morphism of monads,
i.e. a monad functor.

It is unfortunate that the most natural morphisms from a double category
point of view are not the established ones in the literature. At the time of
\cite{Str72}, people were more interested in the Eilenberg-Moore algebras
for a monad as a generalization of Lawvere theories and their algebras,
so it was natural to choose the monad morphisms that worked well
with those, namely lax morphisms, as monad functors. Now, with the
advent of categorical computer science, Kleisli categories have come
into their own, and it is not so clear what the leading concept is, and
double category theory suggests that it may well be the oplax
morphisms.

\end{example}

\begin{example}\rm

Let $ {\bf C} $ be a category with (a choice of) pullbacks. As is
well-known a monad in $ {\mathbb S}{\rm pan} {\bf C} $ is a
category object in $ {\bf C} $. A morphism of monads in
$ {\mathbb S}{\rm pan} {\bf C} $ is an internal functor.

A retromorphism of monads
$$
\bfig\scalefactor{.8}
\square/>`@{>}|{\bb}`@{>}|{\bb}`>/[A_0`B_0`A_0`B_0;F`A_1`B_1`F]

\morphism(350,250)/=>/<-200,0>[`;\phi]

\efig
$$
is first of all a morphism $ F \colon A_0 \to B_0 $ and then a cell
$$
\bfig\scalefactor{.8}
\square/>`>`>`=/<800,500>[B_1 \times_{B_0} A_0`A_0`B_0`B_0;\phi`d_1 p_1`F d_1`]

\square(0,500)/=`<-`<-`/<800,500>[A_0`A_0`B_1 \times_{B_0} A_0`A_0;`p_2`d_0`]

\efig
$$
which must satisfy the unit law
$$
\bfig\scalefactor{.8}
\qtriangle/>`>`>/<850,550>[A_0`B_1\times_{B_0} A_0`A_1;
\langle \id F, 1_{A_0} \rangle`\id`\phi]

\efig
$$
and the composition law
$$
\bfig\scalefactor{.9}
\square/`>``>/<2200,1000>[B_1 \times_{B_0} B_1 \times_{B_0} A_0`B_1 \times_{B_0} A_0 \times_{A_0} A_1
`B_1 \times_{B_0} A_0`A_1\rlap{\ .};`\nu \times_{B_0} A_0``\phi]

\morphism(0,1000)/>/<1100,0>[B_1 \times_{B_0} B_1 \times_{B_0} A_0`B_1 \times_{B_0} A_1;B_1 \times_{B_0} \phi]

\morphism(1100,1000)/>/<1100,0>[B_1 \times_{B_0} A_1`B_1 \times_{B_0} A_0 \times_{A_0} A_1;\cong]

\morphism(2200,1000)|r|/>/<0,-500>[B_1 \times_{B_0} A_0 \times_{A_0} A_1`A_1 \times_{A_0} A_1;\phi \times_{A_0} A_1]

\morphism(2200,500)|r|/>/<0,-500>[A_1 \times_{A_0} A_1`A_1;\mu]

\efig
$$
This is precisely an internal cofunctor \cite{Agu97, Cla20}.

When $ {\bf C} = {\bf Set} $, a cofunctor $ F \colon {\bf A} \tosl {\bf B} $
consists of an object function
$ F \colon {\rm Ob} {\bf A} \to {\rm Ob} {\bf B} $ and a lifting function
$ \phi \colon (b \colon FA \to B) \longmapsto (a \colon A \to A') $
with $ FA' = B $
$$
\bfig\scalefactor{.8}
\square/>`--`--`>/[A`A'`FA`B;a```b]

\morphism(250,180)/|->/<0,200>[`;]

\efig
$$
satisfying

\noindent (1) (unit law) $ \phi (A, 1_{FA}) = 1_A $

\noindent (2) (composition law)
$$
\phi(b'b, A) = \phi (b', A') \phi (b, A) .
$$
So $ F $ is like a split opfibration given algebraically but
without the functor part.

\end{example}

If $ {\mathbb A} $ has conjoints, we can define coretromorphisms
of monads as retromorphisms in $ {\mathbb A}^{op} $ which now
has companions, and monads in $ {\mathbb A}^{op} $ are the same
as monads in $ {\mathbb A} $. Explicitly, an opretromorphism
$$
(f, \theta) \colon (A, t, \eta, \mu) \to (B, s, \kappa, \nu)
$$
consists of a horizontal morphism $ f \colon A \to B $ in $ {\mathbb A} $
and an opretromorphism $ \theta $
$$
\bfig\scalefactor{.8}
\square(0,250)/>`@{>}|{\bb}`@{>}|{\bb}`>/[A`B`A`B;f`t`s`f]

\morphism(250,450)|r|/=>/<0,200>[`;\theta]

\place(800,500)[=]

\square(1100,0)/`@{>}|{\bb}`@{>}|{\bb}`=/[B`A`A`A;`f^*`t`]

\place(1350,500)[{\scriptstyle \theta}]

\square(1100,500)/=`@{>}|{\bb}`@{>}|{\bb}`/[B`B`B`A;`g`f^*`]

\efig
$$
such that
$$
\bfig\scalefactor{.8}
\square/=`@{>}|{\bb}`@{>}|{\bb}`=/[B`B`A`A;`f^*`f^*`]

\place(250,250)[{\scriptstyle =}]

\square(0,500)/=`=`@{>}|{\bb}`/[B`B`B`B;``s`]

\place(250,750)[{\scriptstyle \kappa}]

\square(500,0)/``@{>}|{\bb}`=/[B`A`A`A;``t`]

\place(750,500)[{\scriptstyle \theta}]

\square(500,500)/=``@{>}|{\bb}`/[B`B`B`A;``f^*`]

\place(1500,500)[=]

\square(2000,0)/=`=`@{>}|{\bb}`=/[A`A`A`A;``t`]

\place(2250,250)[{\scriptstyle \eta}]

\square(2000,500)/=`@{>}|{\bb}`@{>}|{\bb}`/[B`B`A`A;`f^*`f^*`]

\place(2250,750)[{\scriptstyle =}]

\efig
$$
and
$$
\bfig\scalefactor{.8}
\square/=`@{>}|{\bb}`@{>}|{\bb}`=/[B`B`A`A;`f^*`f^*`]

\square(0,500)/=``@{>}|{\bb}`/<500,1000>[B`B`B`B;``s`]

\place(250,250)[{\scriptstyle =}]

\place(250,1000)[{\scriptstyle \nu}]

\morphism(0,1500)/@{>}|{\bb}/<0,-500>[B`B;s]

\morphism(0,1000)/@{>}|{\bb}/<0,-500>[B`B;s]

\morphism(500,1500)/=/<500,0>[B`B;]

\morphism(1000,1500)|r|/@{>}|{\bb}/<0,-500>[B`A;f^*]

\morphism(1000,1000)|r|/@{>}|{\bb}/<0,-1000>[A`A;t]

\morphism(500,0)/=/<500,0>[A`A;]

\place(750,750)[{\scriptstyle \theta}]

\place(1300,750)[=]

\square(1600,0)/`@{>}|{\bb}`@{>}|{\bb}`=/[B`A`A`A;`f^*`t`]

\square(1600,500)/=`@{>}|{\bb}`@{>}|{\bb}`/[B`B`B`A;`s`f^*`]

\square(1600,1000)/=`@{>}|{\bb}`@{>}|{\bb}`/[B`B`B`B;`s`s`]

\place(1850,500)[{\scriptstyle \theta}]

\place(1850,1250)[{\scriptstyle =}]

\square(2100,0)/=``@{>}|{\bb}`=/[A`A`A`A;``t`]

\place(2350,250)[{\scriptstyle =}]

\square(2100,500)/``@{>}|{\bb}`/[B`A`A`A;``t`]

\square(2100,1000)/=``@{>}|{\bb}`/[B`B`B`A;``f^*`]

\place(2350,1000)[{\scriptstyle \theta}]

\square(2600,0)/=``@{>}|{\bb}`=/<500,1000>[A`A`A`A\rlap{\ .};``t`]

\place(2850,500)[{\scriptstyle \mu}]

\square(2600,1000)/=``@{>}|{\bb}`/[B`B`A`A;``f^*`]

\place(2850,1250)[{\scriptstyle =}]

\efig
$$

Coretromorphisms do not come up in the formal theory of
monads because the \nobreak{double} category of coquintets of a
$ 2 $-category seldom has conjoints, but $ {\mathbb S}{\rm pan} {\bf C} $
does, and we get opcofunctors, i.e. cofunctors
$ {\bf A}^{op} \to {\bf B}^{op} $. These consist of an object function
$ F \colon {\rm Ob} {\bf A} \to {\rm Ob}{\bf B} $ and a lifting function
$$
\theta \colon (b \colon B \to FA) \longmapsto (a \colon A' \to A)
$$
with $ F A' = A $
$$
\bfig\scalefactor{.8}
\square/>`--`--`>/[A'`A`B`FA;a```b]

\morphism(250,200)/|->/<0,200>[`;]

\efig
$$
satisfying

\noindent (1) $ \theta (A, 1_{FA}) = 1_A $

\noindent (2) $ \theta (A, bb') = \theta (A, b) \theta (A', b') $.

This again illustrates well the difference between retromorphisms
and coretromorphisms and, at the same time, the symmetry of
the concepts. They all move objects forward. Functors move
arrows forward
$$
(a \colon A \to A') \longmapsto (Fa \colon FA \to FA') ,
$$
cofunctors move arrows of the form $ FA \to B $ backward
$$
(b \colon FA \to B) \longmapsto (\phi b \colon A \to A')
$$
and opcofunctors move arrows of the form $ B \to FA $
backward
$$
(b \colon B \to FA) \longmapsto (\theta b \colon A' \to A).
$$

All of this can be extended to the enriched setting for a
monoidal category $ {\bf V} $ which has coproducts
preserved by the tensor in each variable. Then a monad in
$ {\bf V} $-$ {\mathbb S}{\rm et} $ is exactly a small
$ {\bf V} $-category and the retromorphisms are exactly
the enriched cofunctors of Clarke and Di~Meglio
\cite{ClaDim22}, to which we refer the reader for further
details.

\section{Closed double categories}

Many bicategories that come up in practice are closed, i.e.
composition $ \otimes $ has right adjoints in each variable,
$$
Q \otimes (-) \dashv Q \obslash (\ )
$$
$$
(\ )\otimes P \dashv (\ ) \oslash P\rlap{\ .}
$$
Thus we have bijections
\begin{center}
\begin{tabular}{c} 
 $P \to Q \obslash R $ \\[3pt]  \hline \\[-12pt]
 $Q \otimes  P \to R$  \\[3pt] \hline \\[-12pt]
$Q \to R \oslash P $ 
\end{tabular}
\end{center}

We adapt (and adopt) Lambek's notation for the internal homs.
$ \otimes $ is a kind of multiplication and $ \obslash $ and $ \oslash $
divisions.

\begin{example}\rm

The original example in \cite{Lam66}, though not expressed in
bicategorical terms, was $ {\cal{B}} {\it im} $ the bicategory whose
objects are rings, $ 1 $-cells bimodules and $ 2 $-cells linear maps.
Composition is $ \otimes $
$$
\bfig\scalefactor{.8}
\Atriangle/@{<-}|{\bb}`@{>}|{\bb}`@{>}|{\bb}/<400,300>[S`R`T\rlap{\ .};M`N`N\otimes_S M]

\efig
$$
($ M $ is an $ S $-$ R $-bimodule, i.e. left $ S $ - right $ R $ bimodule, etc.)
Given $ P \colon R \tod T $, we have the usual bijections
\begin{center}
\begin{tabular}{c} 
 $N \to P \oslash_R M $\mbox{\quad $T$-$S$ linear} \\[3pt]  \hline \\[-12pt]
 $N \otimes_S M \to P$ \mbox{\quad $T$-$R$ linear} \\[3pt] \hline \\[-12pt]
 $M \to N \obslash_T P$\mbox{\quad $S$-$R$ linear}  
\end{tabular}
\end{center}
where
$$ P \oslash_R M = \Hom_R (M, P) $$
$$ N \obslash_T P = \Hom_T (N, P) $$
are the hom bimodules of $ R $-linear (resp. $ T $-linear) maps.

\end{example}

\begin{example}\rm

The bicategory of small categories and profunctors is closed. For profunctors
$$
\bfig\scalefactor{.8}
\Atriangle/@{<-}|{\bb}`@{>}|{\bb}`@{>}|{\bb}/<400,300>[{\bf B}`{\bf A}`{\bf C};P`Q`R]

\efig
$$
we have
$$
(Q \obslash_{\bf C} R) (A, B) = \{n.t. \ Q(B, -) \to R (A, -)\}
$$
and
$$
(R \oslash_{\bf A} P) (B, C) = \{n.t. \ P(-, B) \to R(-, C)\}\rlap{\ .}
$$

\end{example}

\begin{example}\rm 
If $ {\bf A} $ has finite limits, then it is locally cartesian closed if and only if
the bicategory of spans in $ {\bf A} $, $ {\cal{S}}{\it pan} {\bf A} $,
is closed (Day \cite{Day74}).

For spans $ A \to/<-/^{p_0}  R \to^{p_1} B $ and
$ B \to/<-/^{\tau_0} T \to^{\tau_1} C $, the composite is given by
the pullback $ T \times_B R $, which we could compute as
the pullback $ P $ below and then composing with $ \tau_1 $
$$
\bfig\scalefactor{.8}
\square/<-`>`>`<-/<700,500>[R`P`A \times B`A \times T;```A \times \tau_0]

\morphism(700,0)|b|/>/<600,0>[A \times T`A \times C;A \times \tau_1]

\place(350,270)[\mbox{$\scriptstyle PB$}]

\efig
$$
i.e. $ T \otimes_B (\ ) $ is the composite
$$
{\bf A}/(A \times B) \to^{(A \times \tau_0)^*} {\bf A}/(A \times T)
\to^{\sum_{A \times \tau_1}} {\bf A}/(A \times C)\ .
$$
$ \sum_{A \times \tau_1} $ always has a right adjoint $ (A \times \tau_1)^* $
and if $ {\bf A} $ is locally cartesian closed so will $ (A \times \tau_0)^* $,
namely $ \prod_{A \times \tau_0} $. So, for $ A \to/<-/^{\sigma_0} S 
\to^{\sigma_1} C $,
$$ T \obslash_C S = \prod_{A \times \tau_0} (A \times \tau_1)^* S\rlap{\ .}
$$
If we interpret this for $ {\bf A} = {\bf Set} $, in terms of fibers
$$
(T \obslash_C S)_{ab} = \prod_c S_{ac}^{T_{bc}} \ .
$$

The situation for $ \oslash_A $ is similar
$$
(S \oslash_A R)_{bc} = \prod_a S_{ac}^{R_{ab}} \ .
$$

\end{example}

These bicategories, and in fact most bicategories that occur in
practice, are the vertical bicategories of naturally occurring double
categories. So a definition of a (vertically) closed double category
would seem in order. And indeed Shulman in \cite{Shu08} did
give one. A double category is closed if its vertical bicategory is.
This definition was taken up by Koudenburg \cite{Kou14} in
his work on pointwise Kan extensions. But both were working
with ``equipments'', double categories with companions and
conjoints. Something more is needed for general double
categories.

\begin{definition}
(Shulman) $ {\mathbb A} $ has {\em globular left homs} if
for every $ y $, $ y \bdot (\ ) $ has a right adjoint $ y \bsd (\ ) $
in $ {\cal{V}}{\it ert} {\mathbb A} $.

\end{definition}

Thus for every $ z $ we have a bijection
$$
\frac{y \bdot x \to z}{x \to y \bsd z} \mbox{\quad\quad in $ {\cal{V}}{\it ert} {\mathbb A} $}
$$

$$
\bfig\scalefactor{.8}
\square/=``@{>}|{\bb}`=/<500,1000>[A`A`C`C;``z`]

\place(250,500)[{\scriptstyle \alpha}]

\morphism(0,1000)/@{>}|{\bb}/<0,-500>[A`B;x]

\morphism(0,500)/@{>}|{\bb}/<0,-500>[B`C;y]

\morphism(900,-20)/-/<0,1060>[`;]

\square(1300,250)/=`@{>}|{\bb}`@{>}|{\bb}`=/[A`A`B`B;`x`y \bsd z`]

\place(1550,500)[{\scriptstyle \beta}]

\place(1900,0)[.]

\efig
$$
Of course there is the usual naturality condition on $ x $,
which is guaranteed by expressing the above bijection as
composition with an evaluation cell $ \epsilon \colon y \bdot
(y \bsd z) \to z $
$$
\bfig\scalefactor{.8}
\square/=``@{>}|{\bb}`=/<500,1000>[A`A`C`C\rlap{\ .};``z`]

\place(250,500)[{\scriptstyle \epsilon}]

\morphism(0,1000)/@{>}|{\bb}/<0,-500>[A`B;y \bsd z]

\morphism(0,500)/@{>}|{\bb}/<0,-500>[B`C;y]

\efig
$$
The universal property is then: for every $ \alpha $ there is a
unique $ \beta $, as below, such that
$$
\bfig\scalefactor{.8}
\square/=`@{>}|{\bb}`@{>}|{\bb}`=/[B`B`C`C;`y`y`]

\place(250,250)[{\scriptstyle =}]

\square(0,500)/=`@{>}|{\bb}`@{>}|{\bb}`/[A`A`B`B;`x`y \bsd z`]

\place(250,750)[{\scriptstyle \beta}]

\square(500,0)/=``@{>}|{\bb}`=/<500,1000>[A`A`C`C;``z`]

\place(750,500)[{\scriptstyle \epsilon}]

\place(1400,500)[=]

\square(1800,0)/=``@{>}|{\bb}`=/<500,1000>[A`A`C`C\rlap{\ .};``z`]

\place(2050,500)[{\scriptstyle \alpha}]

\morphism(1800,1000)/@{>}|{\bb}/<0,-500>[A`B;x]

\morphism(1800,500)/@{>}|{\bb}/<0,-500>[B`C;y]

\efig
$$
This shows clearly that $ \bsd $ has nothing to do with
horizontal arrows, and the interplay between the horizontal
and vertical is at the very heart of double categories.

\begin{definition}
$ {\mathbb A} $ has {\em strong left homs (is left closed)} if for every $ y $ and
$ z $ as below there is a vertical arrow $ y \bsd z $ and an
evaluation cell $ \epsilon $ such that for every $ \alpha $ there is
a unique $ \beta $ such that
$$
\bfig\scalefactor{.8}
\square/=`@{>}|{\bb}`@{>}|{\bb}`=/[B`B`C`C;`y`y`]

\place(250,250)[{\scriptstyle =}]

\square(0,500)/>`@{>}|{\bb}`@{>}|{\bb}`/[A`A`B`B;f`x`y \bsd z`]

\place(250,750)[{\scriptstyle \beta}]

\square(500,0)/=``@{>}|{\bb}`=/<500,1000>[A`A`C`C;``z`]

\place(750,500)[{\scriptstyle \epsilon}]

\place(1400,500)[=]

\square(1800,0)/>``@{>}|{\bb}`=/<500,1000>[A'`A`C`C\rlap{\ .};f``z`]

\place(2050,500)[{\scriptstyle \alpha}]

\morphism(1800,1000)/@{>}|{\bb}/<0,-500>[A'`B;x]

\morphism(1800,500)/@{>}|{\bb}/<0,-500>[B`C;y]

\efig
$$

\end{definition}

\begin{proposition}

If $ {\mathbb A} $ has companions and has globular left homs, then
the strong universal property is equivalent to stability under companions:
for every $ f $, the canonical morphism
$$
(y \bsd z) \bdot f_* \to y \bsd (z \bdot f_*)
$$
is an isomorphism.

\end{proposition}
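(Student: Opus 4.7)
The plan is to reduce both conditions to a Yoneda statement about representable functors on the vertical bicategory ${\cal{V}}{\it ert}\,{\mathbb A}$, using sliding to trade the horizontal arrow $f$ for its companion $f_*$.

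Fix $f \colon A' \to A$ and let $x \colon A' \to B$ be an arbitrary vertical arrow. By the sliding bijection of Section~1, a cell with top $f$, left $y \bdot x$, right $z$, and bottom identity corresponds naturally to a globular cell $\widehat{\alpha} \colon y \bdot x \to z \bdot f_*$, while a cell with top $f$, left $x$, right $y \bsd z$, and bottom identity corresponds naturally to a globular cell $\widetilde{\beta} \colon x \to (y \bsd z) \bdot f_*$. Hence the strong universal property at $f$ is equivalent to a bijection, natural in $x$, between globular cells $y \bdot x \to z \bdot f_*$ and globular cells $x \to (y \bsd z) \bdot f_*$. The assumed globular left hom supplies a second such bijection, given by pasting with $\epsilon_{z \bdot f_*}$, between globular cells $y \bdot x \to z \bdot f_*$ and globular cells $x \to y \bsd (z \bdot f_*)$. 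Composing these, the strong universal property at $f$ becomes equivalent to a natural isomorphism of representables
\[
{\cal{V}}{\it ert}\,{\mathbb A}\bigl(x,\,(y \bsd z) \bdot f_*\bigr) \;\cong\; {\cal{V}}{\it ert}\,{\mathbb A}\bigl(x,\,y \bsd (z \bdot f_*)\bigr),
\]
which by Yoneda amounts to an isomorphism $(y \bsd z) \bdot f_* \cong y \bsd (z \bdot f_*)$.

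To identify this isomorphism with the canonical comparison, evaluate the chain of bijections at $x = (y \bsd z) \bdot f_*$ on the identity cell. The resulting globular cell is, up to the canonical associator, $\epsilon_z \bdot f_* \colon y \bdot \bigl((y \bsd z) \bdot f_*\bigr) \to z \bdot f_*$, whose mate under the globular universal property of $y \bsd (z \bdot f_*)$ is by definition the canonical morphism. The main obstacle is verifying that sliding is natural in $x$ with respect to precomposition by globular $2$-cells, so the Yoneda argument applies cleanly; this is a coherence check tracking the binding cells $\psi_f$ and $\chi_f$ through the pasting diagrams, routine but not quite automatic.
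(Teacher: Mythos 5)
Your proposal is correct and follows essentially the same route as the paper's (sketched) proof: both slide the horizontal arrow $f$ into its companion $f_*$ to convert the strong universal property into a statement about globular cells $x \to (y \bsd z) \bdot f_*$, compare with the globular universal property of $y \bsd (z \bdot f_*)$, and conclude by Yoneda in $ {\cal{V}}{\it ert}\,{\mathbb A} $. Your final paragraph identifying the resulting isomorphism with the canonical comparison map is a detail the paper leaves implicit, so it is a welcome addition rather than a deviation.
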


\begin{proof}
(Sketch) For every $ f $ and $ x $ as below we have the
following natural bijections of cells
$$
\bfig\scalefactor{.8}
\square/>``@{>}|{\bb}`=/<500,1000>[A'`A`C`C;f``z`]

\place(250,500)[{\scriptstyle \alpha}]

\morphism(0,1000)/@{>}|{\bb}/<0,-500>[A'`B;x]

\morphism(0,500)/@{>}|{\bb}/<0,-500>[B`C;y]

\morphism(900,-20)/-/<0,1060>[`;]

\square(1300,0)/`@{>}|{\bb}`@{>}|{\bb}`=/[B`A`C`C;`y`z`]

\place(1550,500)[{\scriptstyle \ov{\alpha}}]

\square(1300,500)/=`@{>}|{\bb}`@{>}|{\bb}`/[A'`A'`B`A;`x`f_*`]

\morphism(2200,-20)/-/<0,1060>[`;]

\square(2600,250)/=`@{>}|{\bb}`@{>}|{\bb}`=/[A'`A'`B`B;`x`y \bsd (z \bdot f_*)`]

\place(2850,500)[{\scriptstyle \beta}]

\place(3500,0)[.]

\efig
$$
$ y \bsd z $ is strong iff we have the following bijections
$$
\bfig\scalefactor{.8}
\square/>``@{>}|{\bb}`=/<500,1000>[A'`A`C`C;f``z`]

\place(250,500)[{\scriptstyle \alpha}]

\morphism(0,1000)/@{>}|{\bb}/<0,-500>[A'`B;x]

\morphism(0,500)/@{>}|{\bb}/<0,-500>[B`C;y]

\morphism(900,-20)/-/<0,1060>[`;]

\square(1300,250)/>`@{>}|{\bb}`@{>}|{\bb}`=/[A'`A`B`B;f`x`y \bsd z`]

\place(1550,500)[{\scriptstyle \gamma}]

\morphism(2200,-20)/-/<0,1060>[`;]

\square(2600,0)/=`@{>}|{\bb}``=/<500,1000>[A'`A'`B`B;`x``]

\place(2850,500)[{\scriptstyle \ov{\gamma}}]

\morphism(3100,1000)|r|/@{>}|{\bb}/<0,-500>[A'`A;f_*]

\morphism(3100,500)|r|/@{>}|{\bb}/<0,-500>[A`B\rlap{\ .};y \bsd z]

\efig
$$

\end{proof}

\begin{proposition}
If $ {\mathbb A} $ has conjoints, then the strong universal
property is equivalent to the globular one.

\end{proposition}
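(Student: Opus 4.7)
The plan is as follows. The forward direction is trivial: setting $f = 1_A$ in the strong universal property reduces it to the globular one. For the converse, I would use the flipping bijections of Section~4 to reduce an arbitrary strong factorization problem to a globular one, in a manner dual to the sliding argument of the previous proposition.

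Given a cell $\alpha$ as in the strong definition, with top horizontal arrow $f \colon A' \to A$, first apply the flipping bijection, taking the absent second top arrow to be $1_A$. This produces a globular cell
$$\alpha^\flat \colon (y \bdot x) \bdot f^* \longrightarrow z$$
whose left leg prepends the conjoint $f^* \colon A \tod A'$ to the original $y \bdot x$. Using associativity to regard the left leg as $y \bdot (x \bdot f^*)$, the globular universal property of $y \bsd z$ delivers a unique globular factorization
$$\beta^\flat \colon x \bdot f^* \longrightarrow y \bsd z$$
of $\alpha^\flat$ through $\epsilon$. Flipping $\beta^\flat$ back, with $f$ reinstated on top, produces a cell $\beta$ with top $f$, left $x$, right $y \bsd z$ and bottom $1_B$, which is the candidate strong factorization.

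To finish, I would verify that this $\beta$ satisfies the pasting equation with $\epsilon$ of the strong definition, and that it is the unique such. Both reduce, by the naturality of flipping under vertical pasting below, to the corresponding globular equation for $\beta^\flat$ and $\alpha^\flat$, which holds by construction. The crucial point is that $\epsilon$ is itself a globular cell, and hence is unaffected by the flipping transformation: pasting $\epsilon$ below $\beta$ therefore corresponds, after flipping, to pasting $\epsilon$ below $\beta^\flat$. The main obstacle is the bookkeeping of this naturality, which parallels the sliding-based argument of the previous proposition and rests only on the good interaction of the binding cells of conjoints with vertical pasting.
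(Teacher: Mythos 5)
Your proposal is correct and follows essentially the same route as the paper: the paper's (sketched) proof is precisely the chain of natural bijections $\alpha \leftrightarrow \widetilde{\alpha} \leftrightarrow \widetilde{\beta} \leftrightarrow \beta$, where the outer two are the flipping bijections along the conjoint $f^*$ and the middle one is the globular universal property applied to $x \bdot f^*$. Your extra remarks on compatibility with $\epsilon$ and uniqueness just spell out the naturality that the paper leaves implicit.
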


\begin{proof}
(Sketch) For every $ f $ and $ x $ as below we have the
following natural bijections
$$
\bfig\scalefactor{.8}
\square/>``@{>}|{\bb}`=/<500,1000>[A'`A`C`C;f``z`]

\place(250,500)[{\scriptstyle \alpha}]

\morphism(0,1000)/@{>}|{\bb}/<0,-500>[A'`B;x]

\morphism(0,500)/@{>}|{\bb}/<0,-500>[B`C;y]

\morphism(800,-320)/-/<0,1650>[`;]

\square(1200,-250)/=``@{>}|{\bb}`=/<600,1500>[A`A`C`C;``z`]

\place(1500,550)[{\scriptstyle \widetilde{\alpha}}]

\morphism(1200,1250)/@{>}|{\bb}/<0,-500>[A`A';f^*]

\morphism(1200,750)/@{>}|{\bb}/<0,-500>[A'`B;x]

\morphism(1200,250)/@{>}|{\bb}/<0,-500>[B`C;y]

\morphism(2150,-320)/-/<0,1650>[`;]

\square(2500,0)/=``@{>}|{\bb}`=/<500,1000>[A`A`B`B;``y \bsd z`]

\morphism(2500,1000)/@{>}|{\bb}/<0,-500>[A`A';f^*]

\morphism(2500,500)/@{>}|{\bb}/<0,-500>[A'`B;x]

\place(2750,500)[{\scriptstyle \widetilde{\beta}}]

\morphism(3300,-320)/-/<0,1650>[`;]

\square(3600,250)/>`@{>}|{\bb}`>`=/[A'`A`B`B;f`x`y \bsd z`]

\place(3850,500)[{\scriptstyle \beta}]

\efig
$$

\end{proof}

All of the examples above have conjoints so the left homs are
automatically strong.

\vspace{2mm}

Of course, $ y \bsd z $ is functorial in $ y $ and $ z $, contravariant
in $ y $ and covariant in $ z $, but only for globular cells
$ \beta $, $ \gamma $
$$
y' \to^\beta y \quad \& \quad z \to^\gamma z' \quad \leadsto \quad y \bsd z \to^{\beta \bsd \gamma}
y' \bsd z' \ .
$$
For general double category cells $ \beta $, $ \gamma $
$$
\bfig\scalefactor{.8}
\square/>`@{>}|{\bb}`@{>}|{\bb}`>/[B'`B`C'`C;b`y'`y`c]

\place(250,250)[{\scriptstyle \beta}]

\place(950,250)[\mbox{and}]

\square(1400,0)/>`@{>}|{\bb}`@{>}|{\bb}`>/[A`A'`C`C';a`z`z'`c']

\place(1650,250)[{\scriptstyle \gamma}]

\efig
$$
we would hope to get a cell
$$
\bfig\scalefactor{.8}
\square/>`@{>}|{\bb}`@{>}|{\bb}`>/[A`A'`B`B';a`y \bsd z`y' \bsd z'`]

\place(250,250)[{\scriptstyle \beta \bsd \gamma}]

\efig
$$
but $ b $ is in the wrong direction, and there are $ c $ and $ c' $ in
opposite directions.  If we reverse $ b $ and $ c $ then $ \beta $ is
in the wrong direction. That was the motivation for retrocells.

\begin{proposition}
Suppose $ {\mathbb A} $ has companions and is (strongly) left
closed. Then a retrocell $ \beta $ and a standard cell $ \gamma $
$$
\bfig\scalefactor{.8}
\square/>`@{>}|{\bb}`@{>}|{\bb}`>/[B`B'`C`C';b`y`y'`c]

\morphism(350,250)/=>/<-200,0>[`;\beta]

\square(1000,0)/>`@{>}|{\bb}`@{>}|{\bb}`>/[A`A'`C`C';a`z`z'`c]

\place(1250,250)[{\scriptstyle \gamma}]

\efig
$$
induce a canonical cell
$$
\bfig\scalefactor{.8}
\square/>`@{>}|{\bb}`@{>}|{\bb}`>/[A`A'`B`B'\rlap{\ .};a`y \bsd z`y' \bsd z'`b]

\place(250,250)[{\scriptstyle \beta \bsd \gamma}]

\efig
$$

\end{proposition}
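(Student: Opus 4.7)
The plan is to convert the entire problem into globular $2$-cells in the vertical bicategory ${\cal V}{\it ert}\,\mathbb{A}$ and then invoke the strong universal property of $y' \bsd z'$. By the sliding bijection of Section 1 (equivalently the mates calculus of Section 3), constructing $\beta \bsd \gamma$ is the same as producing its globular transpose
$$
\widehat{\beta \bsd \gamma}\colon b_* \bdot (y \bsd z) \longrightarrow (y' \bsd z') \bdot a_*.
$$
The standard cell $\gamma$ transposes to $\widehat{\gamma}\colon c_* \bdot z \to z' \bdot a_*$, and the retrocell $\beta$ is, by its very definition, a $2$-cell $\beta\colon y' \bdot b_* \to c_* \bdot y$. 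So the task reduces to assembling these two ingredients together with the evaluation $\epsilon\colon y \bdot (y \bsd z) \to z$ into $\widehat{\beta \bsd \gamma}$.

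Next I would apply the preceding proposition, which says that strongness of $y' \bsd z'$ is precisely the invertibility of the canonical map
$$
(y' \bsd z') \bdot a_* \longrightarrow y' \bsd (z' \bdot a_*).
$$
It therefore suffices to give a $2$-cell $b_* \bdot (y \bsd z) \to y' \bsd (z' \bdot a_*)$, which by the globular universal property of $y' \bsd (z' \bdot a_*)$ is the same data as a $2$-cell
$$
y' \bdot b_* \bdot (y \bsd z) \longrightarrow z' \bdot a_*.
$$
For the latter I take the obvious pasting
$$
y' \bdot b_* \bdot (y \bsd z) \stackrel{\beta \bdot 1}{\longrightarrow} c_* \bdot y \bdot (y \bsd z) \stackrel{1 \bdot \epsilon}{\longrightarrow} c_* \bdot z \stackrel{\widehat{\gamma}}{\longrightarrow} z' \bdot a_*,
$$
and then transport back through the globular universal property, the stability isomorphism, and the sliding bijection to recover $\beta \bsd \gamma$.

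Naturality in $\beta$ and $\gamma$ is automatic from this description, so there is no genuine obstacle. The really substantive step is the appeal to strongness: without the isomorphism $(y' \bsd z') \bdot a_* \cong y' \bsd (z' \bdot a_*)$ the construction naturally lands in the ``wrong'' vertical arrow $y' \bsd (z' \bdot a_*)$, and there would be no way to accommodate the non-identity horizontal boundary $a$ on top. This is precisely the motivation, highlighted in the paragraph preceding the proposition, for invoking retrocells to restore functoriality of the internal hom in its retrovariant argument.
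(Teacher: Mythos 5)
Your proposal is correct and takes essentially the same route as the paper: the paper's canonical cell $\ov{\ov{\xi}}$ is exactly your pasting $\widehat{\gamma}\circ(1\bdot\epsilon)\circ(\beta\bdot 1) \colon y'\bdot b_*\bdot(y\bsd z)\to z'\bdot a_*$, just written with the horizontal arrow $a$ kept on the boundary instead of slid to $a_*$. The only cosmetic difference is that you factor the final step through the globular universal property plus the stability isomorphism $(y'\bsd z')\bdot a_*\cong y'\bsd(z'\bdot a_*)$, whereas the paper invokes the strong universal property directly --- and the proposition immediately preceding this one shows these are the same thing.
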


\begin{proof}
(Sketch) A candidate $ \xi $ for $ \beta \bsd \gamma $ would satisfy
the following bijections
$$
\bfig\scalefactor{.8}
\square(0,250)/>`@{>}|{\bb}`@{>}|{\bb}`>/[A`A'`B`B';a`y \bsd z`y' \bsd z'`b]

\place(250,500)[{\scriptstyle \xi}]

\morphism(950,-120)/-/<0,1250>[`;]

\square(1400,0)/>``@{>}|{\bb}`=/<500,1000>[A`A'`B'`B';a``y' \bsd z'`]

\place(1650,500)[{\scriptstyle \ov{\xi}}]

\morphism(1400,1000)/@{>}|{\bb}/<0,-500>[A`B;y \bsd z]

\morphism(1400,500)/@{>}|{\bb}/<0,-500>[B`B';b_*]

\morphism(2300,-320)/-/<0,1650>[`;]

\square(2800,-250)/>``@{>}|{\bb}`=/<600,1500>[A`A'`C'`C';a``z'`]

\place(3100,500)[{\scriptstyle \ov{\ov{\xi}} }]

\morphism(2800,1250)/@{>}|{\bb}/<0,-500>[A`B;y \bsd z]

\morphism(2800,750)/@{>}|{\bb}/<0,-500>[B`B';b_*]

\morphism(2800,250)/@{>}|{\bb}/<0,-500>[B'`C';y']

\efig
$$
and there is indeed a canonical $ \ov{\ov{\xi}} $, namely
$$
\bfig\scalefactor{.8}
\square/`@{>}|{\bb}`@{>}|{\bb}`=/[B'`C`C'`C';`y'`c_*`]

\square(0,500)/=`@{>}|{\bb}`@{>}|{\bb}`/[B`B`B'`C;`b_*`y`]

\square(0,1000)/=`@{>}|{\bb}`@{>}|{\bb}`/[A`A`B`B;`y \bsd z`y \bsd z`]

\square(500,0)/=``@{>}|{\bb}`=/[C`C`C'`C';``c_*`]

\square(500,500)/=``@{>}|{\bb}`/<500,1000>[A`A`C`C;``z`]

\square(1000,0)/>``=`=/[C`C'`C'`C'\rlap{\ .};c```]

\square(1000,500)/>``@{>}|{\bb}`/<500,1000>[A`A'`C`C';a``z'`]

\place(250,500)[{\scriptstyle \beta}]

\place(250,1250)[{\scriptstyle =}]

\place(750,250)[{\scriptstyle =}]

\place(750,1000)[{\scriptstyle \epsilon}]

\place(1250,250)[{\lrcorner}]

\place(1250,1000)[{\scriptstyle \gamma}]

\efig
$$

\end{proof}

\noindent In fact the cell $ \beta \bsd \gamma $ is not only
canonical but also functorial, i.e. $ (\beta' \beta) \bsd (\gamma' \gamma) =
(\beta' \bsd \gamma') (\beta \bsd \gamma) $. To express this properly
we must define the categories involved. The codomain of $ \bsd $ is
simply $ {\bf A}_1 $, the category whose objects are vertical arrows of
$ {\mathbb A} $ and whose morphisms are (standard) cells. The
domain of $ \bsd $ is the category which, for lack of a better name,
we call $ {\bf TC} ({\mathbb A}) $ (twisted cospans). Its objects are cospans of
vertical arrows and its cells are pairs $ (\beta, \gamma) $
$$
\bfig\scalefactor{.8}
\square/>`@{<-}|{\bb}`@{<-}|{\bb}`>/[C`C'`B`B';c`y`y'`b]

\morphism(340,250)/=>/<-200,0>[`;\beta]

\square(0,500)/>`@{>}|{\bb}`@{>}|{\bb}`/[A`A'`C`C';a`z`z'`]

\place(250,750)[{\scriptstyle \gamma}]

\efig
$$
where $ \beta $ is a retrocell and $ \gamma $ a standard cell. Also
we must flesh out our sketchy construction of $ y \bsd z $. We can express
the universal property of $ y \bsd z $ as representability of a functor
$$
L_{y,z} \colon {\bf A}^{op}_1 \to {\bf Set}\rlap{\ .}
$$
For $ v \colon \ov{A} \tod \ov{B} $, $ L_{y,z} (v) = \{(f, g, \alpha) | f, g, \alpha \mbox{\ \ as in}\ (*)\} $
$$
f \colon \ov{A} \to A , g \colon \ov{B} \to B
$$
$$
\bfig\scalefactor{.8}
\square/>``@{>}|{\bb}`=/<700,1500>[\ov{A}`A`C`C\rlap{\ .};f``z`]

\place(-1300,0)[\ ]

\place(350,750)[{\scriptstyle \alpha}]

\morphism(0,1500)/@{>}|{\bb}/<0,-500>[\ov{A}`\ov{B};v]

\morphism(0,1000)/@{>}|{\bb}/<0,-500>[\ov{B}`B;g_*]

\morphism(0,500)/@{>}|{\bb}/<0,-500>[B`C;y]

\place(2000,750)[(*)]

\efig
$$

Some straightforward calculation will show that $ L_{y, z} $
is indeed a functor. The following bijections show that
$ y \bsd z $ is a representing object for $ L_{y, z} $
$$
\bfig\scalefactor{.8}
\square/>``@{>}|{\bb}`=/<700,1500>[\ov{A}`A`C`C;f``z`]

\place(250,750)[{\scriptstyle \alpha}]

\morphism(0,1500)/@{>}|{\bb}/<0,-500>[\ov{A}`\ov{B};v]

\morphism(0,1000)/@{>}|{\bb}/<0,-500>[\ov{B}`B;g_*]

\morphism(0,500)/@{>}|{\bb}/<0,-500>[B`C;y]

\morphism(1000,-50)/-/<0,1650>[`;]

\square(1300,250)/>``@{>}|{\bb}`=/<500,1000>[\ov{A}`A`B`B;f``y \bsd z`]

\morphism(1300,1250)/@{>}|{\bb}/<0,-500>[\ov{A}`\ov{B};v]

\morphism(1300,750)/@{>}|{\bb}/<0,-500>[\ov{B}`B;g_*]

\place(1550,750)[{\scriptstyle \ov{\alpha}}]

\morphism(2100,100)/-/<0,1250>[`;]

\square(2400,500)/>`@{>}|{\bb}`@{>}|{\bb}`>/[\ov{A}`A`\ov{B}`B;f`v`y \bsd z`g]

\place(2650,750)[{\scriptstyle{\ovv\alpha}}]

\place(3000,0)[.]

\efig
$$

This gives the full double category universal
property of $ \bsd $ : For every boundary
$$
\bfig\scalefactor{.8}
\square/>`@{>}|{\bb}`@{>}|{\bb}`>/[\ov{A}`A`\ov{B}`B;f`v`y \bsd z`g]

\efig
$$
and $ \alpha $ as below, there exists a unique fill-in $ \beta $ such that
$$
\bfig\scalefactor{.8}
\square/>``@{>}|{\bb}`=/<600,1500>[\ov{A}`A`C`C;f``z`]

\place(300,750)[{\scriptstyle \alpha}]

\morphism(0,1500)/@{>}|{\bb}/<0,-500>[\ov{A}`\ov{B};v]

\morphism(0,1000)/@{>}|{\bb}/<0,-500>[\ov{B}`B;g_*]

\morphism(0,500)/@{>}|{\bb}/<0,-500>[B`C;y]

\place(900,750)[=]

\square(1200,0)/=`@{>}|{\bb}`@{>}|{\bb}`=/[B`B`C`C;`y`y`]

\square(1200,500)/`@{>}|{\bb}`=`/[\ov{B}`B`B`B;`g_*``]

\square(1200,1000)/>`@{>}|{\bb}`@{>}|{\bb}`>/[\ov{A}`A`\ov{B}`B;f`v`y \bsd z`g]

\place(1450,250)[{\scriptstyle =}]

\place(1450,750)[{\lrcorner}]

\place(1450,1250)[{\scriptstyle \beta}]

\square(1700,0)/=``@{>}|{\bb}`=/<600,1500>[A`A`C`C\rlap{\ .};``z`]

\place(2000,750)[{\scriptstyle \epsilon}]

\efig
$$

For $ (\beta, \gamma) $ in $ {\bf TC}({\mathbb A}) $ we get a natural
transformation
$$
\phi_{\beta \gamma} \colon L_{y, z} \to L_{y', z'}
$$

$$
\bfig\scalefactor{.8}
\square(0,250)/>``@{>}|{\bb}`=/<500,1500>[\ov{A}`A`C`C;f``z`]

\place(250,1000)[{\scriptstyle \alpha}]

\morphism(0,1750)/@{>}|{\bb}/<0,-500>[\ov{A}`\ov{B};v]

\morphism(0,1250)/@{>}|{\bb}/<0,-500>[\ov{B}`B;g_*]

\morphism(0,750)/@{>}|{\bb}/<0,-500>[B`C;y]

\place(750,1000)[\longmapsto]

\square(1150,0)/=`@{>}|{\bb}``=/[B'`B'`C'`C';`y'``]

\square(1150,500)/=`@{>}|{\bb}``/<500,1000>[\ov{B}`\ov{B}`B'`B';`(bg)_*``]

\square(1150,1500)/=`@{>}|{\bb}``/[\ov{A}`\ov{A}`\ov{B}`\ov{B};`v``]

\place(1400,250)[{\scriptstyle =}]

\place(1400,1000)[{\scriptstyle \cong}]

\place(1400,1750)[{\scriptstyle =}]

\square(1650,0)/`@{>}|{\bb}``=/[B'`C`C'`C';`y'``]

\square(1650,500)/=`@{>}|{\bb}``/[B`B`B'`C;`b_*``]

\square(1650,1000)/=`@{>}|{\bb}``/[\ov{B}`\ov{B}`B`B;`g_*``]

\square(1650,1500)/=`@{>}|{\bb}``/[\ov{A}`\ov{A}`\ov{B}`\ov{B};`v``]

\place(1900,500)[{\scriptstyle \beta}]

\place(1900,1250)[{\scriptstyle =}]

\place(1900,1750)[{\scriptstyle =}]

\square(2150,0)/=`@{>}|{\bb}``=/[C`C`C'`C';`c_*``]

\square(2150,500)/>``@{>}|{\bb}`/<500,1500>[\ov{A}`A`C`C;f``z`]

\morphism(2150,2000)/@{>}|{\bb}/<0,-500>[\ov{A}`\ov{B};v]

\morphism(2150,1500)/@{>}|{\bb}/<0,-500>[\ov{B}`B;g_*]

\morphism(2150,1000)/@{>}|{\bb}/<0,-500>[B`C;y]

\place(2400,250)[{\scriptstyle =}]

\place(2400,1250)[{\scriptstyle \alpha}]

\square(2650,0)/`@{>}|{\bb}`=`=/[C`C'`C'`C'\rlap{\ .};`c_*``]

\square(2650,500)/>``@{>}|{\bb}`>/<500,1500>[A`A'`C`C';a``z'`c]

\place(2900,250)[{ \lrcorner}]

\place(2900,1250)[{\scriptstyle \gamma}]

\efig
$$

Some calculation is needed to show naturality, which we leave
to the reader. This natural transformation is what gives
$ \beta \bsd \gamma $.

We are now ready for the main theorem of the section.

\begin{theorem}
For $ {\mathbb A} $ a left closed double category with companions,
the internal hom is a functor
$$
\bsd\  \colon {\bf TC} ({\mathbb A}) \to {\bf A}_1\rlap{\ .}
$$

\end{theorem}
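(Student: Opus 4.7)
The plan is to reduce functoriality of $\bsd$ to functoriality of the presheaf-valued assignment
$L \colon \mathbf{TC}(\mathbb{A}) \to [\mathbf{A}_1^{op}, \mathbf{Set}]$
that sends an object $(y,z)$ to the representable presheaf $L_{y,z}$ defined just above the theorem, and a morphism $(\beta,\gamma)$ to the natural transformation $\phi_{\beta\gamma}$ also constructed above. Once this $L$ is shown to be a functor, the result follows by a pointwise Yoneda argument: each $L_{y,z}$ is represented by $y \bsd z$ via the bijection $\alpha \mapsto \ovv{\alpha}$ displayed above, so natural transformations $\phi_{\beta\gamma}$ correspond bijectively to cells $\beta \bsd \gamma$ in $\mathbf{A}_1$, and this correspondence transports composition and identities in the presheaf category to those in $\mathbf{A}_1$.

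First I would make the category structure on $\mathbf{TC}(\mathbb{A})$ explicit. Composition of $(\beta,\gamma) \colon (y,z) \to (y',z')$ with $(\beta',\gamma') \colon (y',z') \to (y'',z'')$ is $(\beta'\beta,\, \gamma'\gamma)$, using horizontal composition of retrocells from the retrocell theorem for the $\beta$-half (the shared horizontal $c'c$ lines up as top of $\beta'\beta$ and bottom of $\gamma'\gamma$) and horizontal composition of standard cells in $\mathbb{A}$ for the $\gamma$-half, with identities $(1_y, 1_z)$. The two equations to check for functoriality of $L$ are then $\phi_{1_y,1_z} = \id_{L_{y,z}}$ and $\phi_{\beta'\beta,\,\gamma'\gamma} = \phi_{\beta',\gamma'} \circ \phi_{\beta,\gamma}$. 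The identity law is immediate from the conventions $(1_B)_* = \id_B$ and strictness of vertical identities: the pasting that defines $\phi_{1_y,1_z}(\alpha)$ collapses back to $\alpha$ once the canonical companion isomorphisms reduce to identities.

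The main obstacle is the composition law. Unfolding both sides, $\phi_{\beta'\beta,\,\gamma'\gamma}(\alpha)$ pastes along a single central companion isomorphism built from $((b'b)g)_* \cong b'_* \bdot b_* \bdot g_*$, whereas $\phi_{\beta',\gamma'}\bigl(\phi_{\beta,\gamma}(\alpha)\bigr)$ pastes along two consecutive slices involving first $(bg)_* \cong b_* \bdot g_*$ and then $(b'(bg))_* \cong b'_* \bdot (bg)_*$. Both resulting diagrams are of exactly the form encountered in the associativity argument for $\mathbb{A}^{ret}$ above: pairs of binding cells $\chi \cdot \psi = \id$ cancel, and the pseudofunctor coherence of $(\ )_*$ on horizontal arrows---the very $\chi_{f_3 f_2}/\chi_{f_2 f_1}$ cancellation pattern already carried out there---reduces both composites to a single common normal form. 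Combined with straightforward naturality of $\phi_{\beta\gamma}$ in its $\mathbf{A}_1$-variable (which the paper leaves to the reader), this coherence calculation is the only nontrivial work; once it is in hand, $L$ is a functor, and representability of each $L_{y,z}$ by $y \bsd z$ yields the stated functor $\bsd$.
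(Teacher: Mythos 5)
Your proposal matches the paper's own argument: the paper likewise proves $\phi_{\beta',\gamma'}\circ\phi_{\beta,\gamma}=\phi_{\beta'\beta,\,\gamma'\gamma}$ by comparing the two pastings (a $5\times 7$ array of 23 cells against its rearrangement) and reducing the discrepancy in companion isomorphisms to the coherence of $(\ )_*$, then transfers the result to $\bsd$ via representability of $L_{y,z}$ by $y\bsd z$. The only difference is presentational --- you name the Yoneda step explicitly where the paper leaves it implicit --- so this is essentially the same proof.
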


\begin{proof}
Let $ (\beta, \gamma) $ and $ (\beta', \gamma') $ be composable
morphisms in $ {\bf TC} ({\mathbb A}) $
$$
\bfig\scalefactor{.8}
\square/>`@{<-}|{\bb}`@{<-}|{\bb}`>/[C`C'`B`B';c`y`y'`b]

\square(0,500)/>`@{>}|{\bb}`@{>}|{\bb}`/[A`A'`C`C';a`z`z'`]

\square(500,0)/>``@{<-}|{\bb}`>/[C'`C''`B'`B''\rlap{\ .};c'``y''`b']

\square(500,500)/>``@{>}|{\bb}`/[A'`A''`C'`C'';a'``z''`]

\morphism(350,250)/=>/<-200,0>[`;\beta]

\place(250,750)[{\scriptstyle \gamma}]

\morphism(850,250)/=>/<-200,0>[`;\beta']

\place(750,750)[{\scriptstyle \gamma'}]

\efig
$$
Then
$$
L_{y,z} \to^{\phi_{\beta, \gamma}} L_{y', z'} \to^{\phi_{\beta', \gamma'}} L_{y'', z''}
$$
takes $ v $ to the composite of 23 cells (most of which are
bookkeeping -- identities, canonical isos, ...) arranged in a $ 5 \times 7 $
array, with 38 objects, and best represented schematically as

 \begin{center}
 \setlength{\unitlength}{.9mm}
 \begin{picture}(70,50)
 \put(0,0){\framebox(70,50){}}
 
 \put(10,0){\line(0,1){50}}
 \put(20,0){\line(0,1){50}}
 \put(30,0){\line(0,1){50}}
 \put(40,0){\line(0,1){50}}
 \put(50,0){\line(0,1){50}}
 \put(60,0){\line(0,1){50}}

\put(0,10){\line(1,0){10}}
\put(10,20){\line(1,0){20}}
\put(20,10){\line(1,0){50}}

\put(0,40){\line(1,0){40}}
\put(30,30){\line(1,0){10}}
\put(40,20){\line(1,0){30}}

\put(5,25){\makebox(0,0){$\scriptstyle\cong$}}
\put(15,10){\makebox(0,0){$\scriptstyle \beta'$}}
\put(25,30){\makebox(0,0){$\scriptstyle \cong$}}
\put(35,20){\makebox(0,0){$\scriptstyle \beta$}}
\put(45,35){\makebox(0,0){$\scriptstyle \alpha$}}
\put(55,35){\makebox(0,0){$\scriptstyle \gamma$}}
\put(55,15){\makebox(0,0){$ \lrcorner$}}
\put(65,35){\makebox(0,0){$\scriptstyle \gamma'$}}
\put(65,5){\makebox(0,0){$ \lrcorner$}}

\put(95,25){(*)}
 
 \end{picture}
 \end{center}

\noindent whereas
$$
L_{yz} \to^{\phi_{\beta' \beta, \gamma' \gamma}}  L_{y'' z''}
$$
takes $ v $ to

 \begin{center}
 \setlength{\unitlength}{.9mm}
 \begin{picture}(70,50)
 \put(0,0){\framebox(70,50){}}
 
 \put(10,0){\line(0,1){50}}
 \put(20,0){\line(0,1){50}}
 \put(30,0){\line(0,1){50}}
 \put(40,0){\line(0,1){50}}
 \put(50,0){\line(0,1){50}}
 \put(60,20){\line(0,1){30}}

\put(0,10){\line(1,0){10}}
\put(10,20){\line(1,0){10}}
\put(20,10){\line(1,0){10}}
\put(30,20){\line(1,0){40}}
\put(10,30){\line(1,0){30}}
\put(0,40){\line(1,0){40}}

\put(5,25){\makebox(0,0){$\scriptstyle\cong$}}
\put(15,10){\makebox(0,0){$\scriptstyle \beta'$}}
\put(25,20){\makebox(0,0){$\scriptstyle \beta$}}
\put(35,10){\makebox(0,0){$\scriptstyle \cong$}}
\put(45,35){\makebox(0,0){$\scriptstyle \alpha$}}
\put(55,35){\makebox(0,0){$\scriptstyle \gamma$}}
\put(65,35){\makebox(0,0){$\scriptstyle \gamma'$}}
\put(60,10){\makebox(0,0){$ \lrcorner$}}
\put(90,25){(**)}
 
 \end{picture}
 \end{center}

\noindent The three bottom right cells of (**) compose to
the $ 2 \times 2 $ block on the bottom right of (*), so the
$ 5 \times 3 $ part on the right of (*) is equal to the $ 5 \times 4 $
part on the right of (**). And the rest are equal too by
coherence. It follows that
$$
(\beta' \bsd \gamma') (\beta \bsd \gamma) = (\beta' \beta) \bsd (\gamma' \gamma) .
$$
For identities $ 1_{y \bsd z} = 1_y \bsd 1_z $.

\end{proof}

Right closure is dual but the duality is op, switching the direction of
vertical arrows which switches companions with conjoints and retrocells
with coretrocells. We outline the changes.

\begin{definition}
(Shulman) $ {\mathbb A} $ has {\em globular right homs} if
for every $ x $, $ (\ ) \bdot x $ has a right adjoint $ (\ ) \slashdot x $
in $ {\cal{V}}{\it ert} {\mathbb A} $,
$$
\frac{y \bdot x \to z}{y \to z \slashdot x} \mbox{\quad in \ ${\cal{V}}{\it ert}{\mathbb A} $}.
$$
This bijection is mediated by an evaluation cell
$$
\bfig\scalefactor{.8}
\square/=``@{>}|{\bb}`=/<500,1000>[A`A`C`C\rlap{\ .};```]

\morphism(0,1000)/@{>}|{\bb}/<0,-500>[A`B;x]

\morphism(0,500)/@{>}|{\bb}/<0,-500>[B`C;z \slashdot x]

\place(250,500)[{\scriptstyle \epsilon'}]

\efig
$$
The right homs are {\em strong} if $ z \slashdot x $ has the universal property for cells
of the form
$$
\bfig\scalefactor{.8}
\square/=``@{>}|{\bb}`>/<500,1000>[A`A`C'`C\rlap{\ .};``z`g]

\morphism(0,1000)/@{>}|{\bb}/<0,-500>[A`B;x]

\morphism(0,500)/@{>}|{\bb}/<0,-500>[B`C';y]

\place(250,500)[{\scriptstyle \alpha}]

\efig
$$

\end{definition}

\begin{proposition}
If $ {\mathbb A} $ has conjoints and globular right homs, then
the strong universal property is equivalent to the canonical
morphism
$$
g^* \bdot (z \slashdot x) \to (g^* \bdot z) \slashdot x
$$
being an isomorphism. If instead $ {\mathbb A} $ has companions,
then strong is equivalent to globular.

\end{proposition}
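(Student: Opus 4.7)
The plan is to derive both statements from the two preceding propositions about left homs by applying them to the vertical opposite $\mathbb{A}^{co}$. Reversing vertical direction interchanges the datum of a companion in $\mathbb{A}$ with that of a conjoint in $\mathbb{A}^{co}$, and it also reverses the order of vertical composition, so left closure in $\mathbb{A}^{co}$ corresponds to right closure in $\mathbb{A}$; concretely, $y \bsd^{co} z$ computed in $\mathbb{A}^{co}$ is $z \slashdot y$ in $\mathbb{A}$. Both the globular and strong universal properties transport faithfully across this $co$-duality, since they are phrased purely in terms of vertical arrows and cells.

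For the first statement, I would assume $\mathbb{A}$ has conjoints and globular right homs, so that $\mathbb{A}^{co}$ has companions and globular left homs. The companion-version of the proposition for left closure, applied in $\mathbb{A}^{co}$, gives strength iff the canonical morphism
$$
(y \bsd^{co} z) \bdot^{co} f_*^{co} \to y \bsd^{co} (z \bdot^{co} f_*^{co})
$$
is invertible. Under the dictionary $f_*^{co} = f^*$, $\bdot^{co}$ reversing the order of composition, and $y \bsd^{co} z = z \slashdot y$, this translates precisely to the comparison $g^* \bdot (z \slashdot x) \to (g^* \bdot z) \slashdot x$ of the statement (writing $g$, $x$ for the right-case data in place of $f$, $y$). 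For the second statement, $\mathbb{A}$ having companions means $\mathbb{A}^{co}$ has conjoints, so the conjoint-version of the left-closure proposition applied in $\mathbb{A}^{co}$ yields directly that strong is equivalent to globular for right homs in $\mathbb{A}$.

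The only real obstacle is verifying the $co$-duality dictionary: that companion data reverse to conjoint data, that evaluation cells and the globular and strong universal diagrams are mirror images under the reversal, and that the canonical comparison morphism constructed for left homs in $\mathbb{A}^{co}$ matches literally the one displayed here. These verifications are routine manipulations of the binding cells but must be carried out carefully to justify invoking the earlier propositions. Once the dictionary is in place, both equivalences are immediate.
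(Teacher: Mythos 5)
Your proposal is correct and is exactly the argument the paper intends: the paper states this proposition without proof, having just announced that right closure follows from left closure by the duality that reverses vertical arrows (hence swaps companions with conjoints and the order of vertical composition), which is precisely the $\mathbb{A}^{co}$ dictionary you spell out. Your version is in fact more explicit than the paper's, and your careful dictionary ($f_*^{co}=f^*$, $y\bsd^{co}z = z\slashdot y$, reversal of $\bdot$) correctly yields both halves from the two preceding left-hom propositions.
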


Finally, if $ {\mathbb A} $ has conjoints, $ z \slashdot x $ is functorial
in $ z $ and $ x $, for standard cells in $ z $ and for coretrocells in $ x $.
More precisely, $ \slashdot $ is defined on the category $ {\bf TS} ({\mathbb A}) $ whose
objects are spans of vertical arrows, $ (x, z) $, as below, and whose
morphisms are pairs of cells
$$
\bfig\scalefactor{.8}
\square/>`@{>}|{\bb}`@{>}|{\bb}`>/[A`A'`C`C';a`z`z'`c]

\place(250,250)[{\scriptstyle \gamma}]

\square(0,500)/>`@{<-}|{\bb}`@{<-}|{\bb}`/[B`B'`A`A';b`x`x'`]

\morphism(250,850)/=>/<0,-200>[`;\alpha]

\efig
$$
where $ \alpha $ is a coretrocell and $ \gamma $ a standard one.

\begin{theorem}
If $ {\mathbb A} $ has conjoints and is right closed, then
$ \slashdot $ is a functor
$$
\slashdot \ \colon {\bf TS} ({\mathbb A}) \to {\bf A}_1 .
$$

\end{theorem}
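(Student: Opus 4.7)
The plan is to deduce the theorem from the preceding one by applying the $co$-duality that reverses vertical direction. Under $\mathbb{A} \mapsto \mathbb{A}^{co}$, vertical composition gets its order swapped, so the defining bijection $y \bdot x \to z \;\leftrightarrow\; y \to z \slashdot x$ of the right hom in $\mathbb{A}$ reads, in $\mathbb{A}^{co}$, as $x \bdot y \to z \;\leftrightarrow\; y \to z \slashdot x$, which is precisely the defining bijection of the left hom $x \bsd z$ in $\mathbb{A}^{co}$. Thus $z \slashdot x$ in $\mathbb{A}$ literally is $x \bsd z$ in $\mathbb{A}^{co}$. Since conjoints in $\mathbb{A}$ are the same data as companions in $\mathbb{A}^{co}$, the hypotheses of the theorem are exactly those of the preceding theorem applied to $\mathbb{A}^{co}$.

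Next I would match up the categories. Reversing vertical direction converts spans of vertical arrows into cospans, so the objects of $\mathbf{TS}(\mathbb{A})$ biject with those of $\mathbf{TC}(\mathbb{A}^{co})$. By definition, coretrocells in $\mathbb{A}$ are retrocells in $\mathbb{A}^{co}$, while standard cells of $\mathbb{A}$ and $\mathbb{A}^{co}$ carry the same underlying data (only the orientation of the vertical sides of the boundary differs). So a pair $(\alpha,\gamma)$ with coretrocell $\alpha$ and standard cell $\gamma$ defining a morphism of $\mathbf{TS}(\mathbb{A})$ corresponds canonically to a pair (retrocell, standard cell) defining a morphism of $\mathbf{TC}(\mathbb{A}^{co})$, and composition matches on both sides. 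Finally $\mathbf{A}_1 = (\mathbb{A}^{co})_1$, since horizontal composition of cells is insensitive to the vertical direction.

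Applying the preceding theorem to $\mathbb{A}^{co}$ now furnishes a functor $\bsd \colon \mathbf{TC}(\mathbb{A}^{co}) \to (\mathbb{A}^{co})_1$, which under the identifications above is exactly the claimed functor $\slashdot \colon \mathbf{TS}(\mathbb{A}) \to \mathbf{A}_1$. The main difficulty here is not calculational but notational: one must verify carefully that composition in $\mathbf{TS}(\mathbb{A})$ (which the text does not explicitly spell out) matches composition in $\mathbf{TC}(\mathbb{A}^{co})$ under the correspondence, and that the evaluation cell $\epsilon'$ transports to the evaluation cell $\epsilon$ of $\bsd$ in $\mathbb{A}^{co}$. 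The author's earlier identities such as $(\mathbb{A}^{cor})^{co} = (\mathbb{A}^{co})^{ret}$, together with the explicit dualities recorded in Section~4, supply the complete dictionary. Once that is in place, the $5 \times 7$ schematic array argument from the preceding proof establishes $(\alpha' \slashdot \gamma')(\alpha \slashdot \gamma) = (\alpha'\alpha) \slashdot (\gamma'\gamma)$ verbatim by transport, and identities are preserved for the same reason.
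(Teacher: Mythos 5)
Your duality argument is exactly the route the paper takes: the theorem is stated without a separate proof, immediately after the remark that right closure is obtained from left closure by the duality that reverses vertical arrows (switching companions with conjoints and retrocells with coretrocells), i.e.\ by applying the left-closed theorem to $\mathbb{A}^{co}$. Your identifications $z \slashdot x = x \bsd z$ in $\mathbb{A}^{co}$ and $\mathbf{TS}(\mathbb{A}) \cong \mathbf{TC}(\mathbb{A}^{co})$ are correct; the only caveat is terminological, since the paper's phrase ``the duality is op'' refers to reversing the $1$-cells of the vertical bicategory, which at the double-category level is precisely the $co$ you use.
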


For completeness sake, we end this section with a definition.

\begin{definition}
A double category $ {\mathbb A} $ is closed if it is right
closed and left closed.

\end{definition}

\section{A triple category}

As mentioned in the introduction, one of the inspirations for
retrocells was the commuter cells of \cite{GraPar08}.

\begin{definition}
Let $ {\mathbb A} $ be a double category with companions. A cell
$$
\bfig\scalefactor{.8}
\square/>`@{>}|{\bb}`@{>}|{\bb}`>/[A`B`C`D;f`v`w`g]

\place(250,250)[{\scriptstyle \alpha}]

\efig
$$
is a {\em commuter} cell if the associated globular cell
$ \widehat{\alpha} $
$$
\bfig\scalefactor{.8}
\square/`>`=`=/[C`D`D`D;`g_*``]

\square(0,500)/>`@{>}|{\bb}`@{>}|{\bb}`>/[B`B`C`D;f`v`w`g]

\square(0,1000)/=`=`>`/[A`A`B`B;``f_*`]

\place(250,250)[{ \lrcorner}]

\place(250,750)[{\scriptstyle \alpha}]

\place(250,1250)[{ \ulcorner}]

\efig
$$
is a horizontal isomorphism.

\end{definition}

The intent is that the cell $ \alpha $ itself is an isomorphism
making the square commute (up to isomorphism). 

The inverse of $ \widehat{\alpha} $ is a retrocell, so the question
is, how do we express that a cell and a retrocell are inverse to
each other?

Cells and retrocells form a double category (and
ultimately a triple category). For a double category with companions $ {\mathbb A} $, we
define a new (vertical arrow) double category $ {\mathbb V}{\rm ar}
({\mathbb A}) $ as follows. Its objects are the vertical arrows of
$ {\mathbb A} $, its horizontal arrows are standard cells of
$ {\mathbb A} $, and its vertical arrows are retrocells. It is a
thin double category with a unique cell
$$
\bfig\scalefactor{.8}
\square/>`@{>}|{\bb}`@{>}|{\bb}`>/[v`w`v'`w';\alpha`\beta`\gamma`\alpha']

\place(250,250)[{\scriptstyle !}]

\efig
$$

$$
\bfig\scalefactor{.9}
\node a(300,0)[C']
\node b(800,0)[D']
\node c(0,250)[A']
\node d(300,500)[C]
\node e(800,500)[D]
\node f(0,800)[A]
\node g(500,800)[B]

\arrow|b|/>/[a`b;g']
\arrow|l|/@{>}|{\bb}/[c`a;v']
\arrow|r|/>/[d`a;c]
\arrow|r|/>/[e`b;d]
\arrow|b|/>/[d`e;g]
\arrow|l|/>/[f`c;a]
\arrow|l|/@{>}|{\bb}/[f`d;v]
\arrow|r|/@{>}|{\bb}/[g`e;w]
\arrow|a|/>/[f`g;f]

\place(430,650)[{\scriptstyle \alpha}]

\morphism(160,320)|l|/=>/<0,200>[`;\beta]

\node f'(1500,800)[A]
\node g'(2000,800)[B]
\node c'(1500,300)[A']
\node d'(2000,300)[B']
\node a'(1800,0)[C']
\node b'(2300,0)[D']
\node e'(2300,400)[D]

\arrow|a|/>/[f'`g';f]
\arrow|l|/>/[f'`c';a]
\arrow|l|/>/[g'`d';b]
\arrow|r|/@{>}|{\bb}/[g'`e';w]
\arrow|a|/>/[c'`d';f']
\arrow|l|/@{>}|{\bb}/[c'`a';v']
\arrow|b|/>/[a'`b';g']
\arrow|r|/@{>}|{\bb}/[d'`b';w']
\arrow|r|/>/[e'`b';d]

\place(1950,150)[{\scriptstyle \alpha'}]

\morphism(2150,320)|l|/=>/<0,200>[`;\gamma]

\efig
$$
if we have
$$
f' a = b f
$$
$$
g' c = d g\ ,
$$
and
$$
\bfig\scalefactor{.8}
\square/`@{>}|{\bb}`@{>}|{\bb}`=/[A'`C`C'`C';`v'`c_*`]

\square(0,500)/=`@{>}|{\bb}`@{>}|{\bb}`/[A`A`A'`C;`a_*`v`]

\place(250,500)[{\scriptstyle \beta}]

\square(500,0)/>``@{>}|{\bb}`>/[C`D`C'`D';``d_*`g']

\square(500,500)/>``@{>}|{\bb}`>/[A`B`C`D;f``w`g]

\place(750,250)[{\scriptstyle *}]

\place(750,750)[{\scriptstyle \alpha}]

\place(1300,500)[=]

\square(1600,0)/>`@{>}|{\bb}`@{>}|{\bb}`>/[A'`B'`C'`D';f'`v'`w'`g']

\square(1600,500)/>`@{>}|{\bb}`@{>}|{\bb}`/[A`B`A'`B';f`a_*`b_*`]

\place(1850,250)[{\scriptstyle \alpha'}]

\place(1850,750)[{\scriptstyle *}]

\square(2100,0)/``@{>}|{\bb}`=/[B'`D`D'`D';``d_*`]

\square(2100,500)/=``@{>}|{\bb}`/[B`B`B'`D;``w`]

\place(2350,500)[{\scriptstyle \gamma}]

\efig
$$
where the starred cells 
are the canonical ones gotten from the equations
$ g' c = d g $ and $ f' a = b f $ by ``sliding''.

\begin{proposition}
$ {\mathbb V}{\rm ar} ({\mathbb A}) $ is a strict double category.

\end{proposition}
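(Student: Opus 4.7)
The plan is to exploit the fact that $\mathbb{V}\mathrm{ar}(\mathbb{A})$ is \emph{thin}: there is at most one cell with a given boundary. Consequently, every coherence axiom (horizontal and vertical unit laws, associativities, interchange) is automatic as soon as both sides exist; all I need to verify is that the defining pasting equation is closed under the various compositions and is satisfied by identities. Strictness will then follow from the fact that the horizontal composition of standard cells in $\mathbb{A}_1$ and the vertical composition of retrocells in $\mathbb{A}^{ret}_1$ (as described in the proof of Theorem 2.1) are associative and unital on the nose for our chosen companions.

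First I would dispose of the identities. For a horizontal identity $1_\alpha\colon \alpha\to\alpha$ between two parallel retrocells, the equation becomes an instance of the binding equations $\psi_{1_A}=\chi_{1_A}=1_{\id_A}$ together with our convention $(1_A)_*=\id_A$, so both sides of the compatibility diagram reduce to $\alpha$ itself. The vertical identity $1_\beta$ on a horizontal arrow $\beta$ (a standard cell) is handled symmetrically, reducing to an identity on $\beta$. These two checks also verify the unit laws in the cell direction.

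The core of the proof is to check closure under horizontal and vertical composition. For horizontal composition, suppose we have two adjacent squares sharing a common retrocell edge; the compatibility equation for the left square rewrites the composite $(\text{left retrocell})\cdot \alpha$ in one direction while the equation for the right square rewrites the analogous composite for the right. I would paste these two equations along the shared edge and use the canonical comparisons $(gg')_*\cong g_*\bdot g'_*$ together with the fact that a $\chi$ cancels with the subsequent $\psi$ in the sliding cells labeled $*$, exactly as in the $4\times 2$-block reduction used for associativity in the proof of Theorem 2.1. For vertical composition of cells, the argument is dual, using the composition of retrocells (stacked pasting) and the symmetric cancellation.

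The one step that demands genuine care, and which I expect to be the main obstacle, is identifying the sliding cells $*$ across a composite. The equations $f'a=bf$ and $g'c=dg$ for the composite boundary have to produce sliding cells which agree with the horizontal (resp.\ vertical) composites of the sliding cells of the two factors; this is the familiar functoriality of sliding with respect to companion composition, but it must be spelled out once to tie the two compatibility equations together. Once that is in hand, the interchange law and associativities in both directions are immediate by thinness, and the claim that $\mathbb{V}\mathrm{ar}(\mathbb{A})$ is \emph{strict} follows because no associator or unitor cells have been invoked in the construction — only pasting of cells of $\mathbb{A}$ and of $\mathbb{A}^{ret}$, both of which compose strictly on the chosen companions.
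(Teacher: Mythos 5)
Your proof is correct and follows essentially the same route as the paper's: reduce everything to thinness plus closure of the compatibility equation under the two compositions, chain the two defining equations along the shared edge, and isolate as the only real work the fact that the sliding cells $*$ for a composite boundary decompose as composites of the sliding cells of the factors. The paper presents exactly this chain schematically (and likewise dismisses identities in one line), so there is nothing of substance to add.
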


\begin{proof}
We just have to check that cells compose horizontally and
vertically. We simply give a sketch of the proof.

Suppose we have two cells,
$$
\bfig\scalefactor{.8}
\square/>`@{>}|{\bb}`@{>}|{\bb}`>/[v`w`v'`w';\alpha`\beta`\gamma`\alpha']

\place(250,250)[{\scriptstyle !}]

\square(500,0)/>``@{>}|{\bb}`>/[w`x`w'`x';\delta``\xi`\delta']

\place(750,250)[{\scriptstyle !}]

\efig
$$
i.e. we have

 \begin{center}
 \setlength{\unitlength}{.9mm}
 \begin{picture}(130,20)
 
\put(0,0){\framebox(20,20){}}
\put(30,0){\framebox(20,20){}}
\put(80,0){\framebox(20,20){}}
\put(110,0){\framebox(20,20){}}

\put(10,0){\line(0,1){20}}
\put(40,0){\line(0,1){20}}
\put(90,0){\line(0,1){20}}
\put(120,0){\line(0,1){20}}

\put(10,10){\line(1,0){10}}
\put(30,10){\line(1,0){10}}
\put(90,10){\line(1,0){10}}
\put(110,10){\line(1,0){10}}

\put(25,10){\makebox(0,0){$=$}}
\put(65,10){\makebox(0,0){and}}
\put(105,10){\makebox(0,0){$=$}}

\put(5,10){\makebox(0,0){$\scriptstyle \beta$}}
\put(15,5){\makebox(0,0){$\scriptstyle *$}}
\put(15,15){\makebox(0,0){$\scriptstyle \alpha$}}
\put(15,15){\makebox(0,0){$\scriptstyle \alpha$}}
\put(35,5){\makebox(0,0){$\scriptstyle \alpha'$}}
\put(35,15){\makebox(0,0){$\scriptstyle *$}}
\put(45,10){\makebox(0,0){$\scriptstyle \gamma$}}
\put(85,10){\makebox(0,0){$\scriptstyle \gamma$}}
\put(95,5){\makebox(0,0){$\scriptstyle *$}}
\put(95,15){\makebox(0,0){$\scriptstyle \delta$}}
\put(115,5){\makebox(0,0){$\scriptstyle \delta'$}}
\put(115,15){\makebox(0,0){$\scriptstyle *$}}
\put(125,10){\makebox(0,0){$\scriptstyle \xi$}}

\put(135,0){.}
 
 \end{picture}
 \end{center}

\noindent Thus

 \begin{center}
 \setlength{\unitlength}{.9mm}
 \begin{picture}(170,20)
 
\put(0,0){\framebox(20,20){}}
\put(30,0){\framebox(30,20){}}
\put(70,0){\framebox(30,20){}}
\put(110,0){\framebox(30,20){}}
\put(150,0){\framebox(20,20){}}
%
%
\put(10,0){\line(0,1){20}}
\put(40,0){\line(0,1){20}}
\put(50,0){\line(0,1){20}}
\put(80,0){\line(0,1){20}}
\put(90,0){\line(0,1){20}}
\put(120,0){\line(0,1){20}}
\put(130,0){\line(0,1){20}}
\put(160,0){\line(0,1){20}}
%
%
\put(10,10){\line(1,0){10}}
\put(40,10){\line(1,0){20}}
\put(70,10){\line(1,0){10}}
\put(90,10){\line(1,0){10}}
\put(110,10){\line(1,0){20}}
\put(150,10){\line(1,0){10}}

%
%
\put(25,10){\makebox(0,0){$=$}}
\put(65,10){\makebox(0,0){$=$}}
\put(105,10){\makebox(0,0){$=$}}
\put(145,10){\makebox(0,0){$=$}}

%
%
\put(5,10){\makebox(0,0){$\scriptstyle \beta$}}
\put(15,5){\makebox(0,0){$\scriptstyle *$}}
\put(15,15){\makebox(0,0){$\scriptstyle \delta \alpha$}}
\put(35,10){\makebox(0,0){$\scriptstyle \beta$}}
\put(45,5){\makebox(0,0){$\scriptstyle *$}}
\put(45,15){\makebox(0,0){$\scriptstyle \alpha$}}
\put(55,5){\makebox(0,0){$\scriptstyle *$}}
\put(55,15){\makebox(0,0){$\scriptstyle \delta$}}
\put(75,5){\makebox(0,0){$\scriptstyle \alpha'$}}
\put(75,15){\makebox(0,0){$\scriptstyle *$}}
\put(85,10){\makebox(0,0){$\scriptstyle \gamma$}}
\put(95,5){\makebox(0,0){$\scriptstyle *$}}
\put(95,15){\makebox(0,0){$\scriptstyle \delta$}}
\put(115,5){\makebox(0,0){$\scriptstyle \alpha'$}}
\put(115,15){\makebox(0,0){$\scriptstyle *$}}
\put(125,5){\makebox(0,0){$\scriptstyle \delta'$}}
\put(125,15){\makebox(0,0){$\scriptstyle *$}}
\put(135,10){\makebox(0,0){$\scriptstyle \xi$}}
\put(155,5){\makebox(0,0){$\scriptstyle \alpha' \delta'$}}
\put(155,15){\makebox(0,0){$\scriptstyle *$}}
\put(165,10){\makebox(0,0){$\scriptstyle \xi$}}
\put(175,0){.}
 
 \end{picture}
 \end{center}

\noindent Consider cells
$$
\bfig\scalefactor{.8}
\square/>`@{>}|{\bb}`@{>}|{\bb}`>/[v'`w'`v''`w''\rlap{\ .};\alpha'`\beta'`\gamma'`\alpha'']

\place(250,250)[{\scriptstyle !}]

\square(0,500)/>`@{>}|{\bb}`@{>}|{\bb}`/[v`w`v'`w';\alpha`\beta`\gamma`]

\place(250,750)[{\scriptstyle !}]

\efig
$$
We did not say, but vertical composition of arrows in
$ {\mathbb V}{\rm ar} ({\mathbb A}) $ is given by horizontal
composition of retrocells. It could not be otherwise given their
boundaries. Then we have the following

 \begin{center}
 \setlength{\unitlength}{.9mm}
 \begin{picture}(170,30)
 
\put(0,5){\framebox(20,20){}}
\put(30,0){\framebox(30,30){}}
\put(70,0){\framebox(30,30){}}
\put(110,0){\framebox(30,30){}}
\put(150,5){\framebox(20,20){}}
\put(10,5){\line(0,1){20}}
\put(40,0){\line(0,1){30}}
\put(50,0){\line(0,1){30}}
\put(80,0){\line(0,1){30}}
\put(90,0){\line(0,1){30}}
\put(120,0){\line(0,1){30}}
\put(130,0){\line(0,1){30}}
\put(160,5){\line(0,1){20}}
\put(10,15){\line(1,0){10}}
\put(30,20){\line(1,0){10}}
\put(40,10){\line(1,0){20}}
\put(50,20){\line(1,0){10}}
\put(70,20){\line(1,0){20}}
\put(80,10){\line(1,0){20}}
\put(110,10){\line(1,0){10}}
\put(110,20){\line(1,0){20}}
\put(130,10){\line(1,0){10}}
\put(150,15){\line(1,0){10}}
%
\put(25,15){\makebox(0,0){$=$}}
\put(65,15){\makebox(0,0){$=$}}
\put(105,15){\makebox(0,0){$=$}}
\put(145,15){\makebox(0,0){$=$}}
%
%
\put(5,15){\makebox(0,0){$\scriptstyle \beta' \bdot \beta$}}
\put(15,10){\makebox(0,0){$\scriptstyle *$}}
\put(15,20){\makebox(0,0){$\scriptstyle \alpha$}}
\put(35,10){\makebox(0,0){$\scriptstyle \beta'$}}
\put(35,25){\makebox(0,0){$\scriptstyle =$}}
\put(45,5){\makebox(0,0){$\scriptstyle =$}}
\put(45,20){\makebox(0,0){$\scriptstyle \beta$}}
\put(55,5){\makebox(0,0){$\scriptstyle *$}}
\put(55,25){\makebox(0,0){$\scriptstyle \alpha$}}
\put(75,10){\makebox(0,0){$\scriptstyle \beta'$}}
\put(75,25){\makebox(0,0){$\scriptstyle =$}}
\put(85,5){\makebox(0,0){$\scriptstyle *$}}
\put(85,15){\makebox(0,0){$\scriptstyle \alpha'$}}
\put(85,25){\makebox(0,0){$\scriptstyle *$}}
\put(95,5){\makebox(0,0){$\scriptstyle =$}}
\put(95,20){\makebox(0,0){$\scriptstyle \gamma$}}
\put(115,5){\makebox(0,0){$\scriptstyle \alpha''$}}
\put(115,15){\makebox(0,0){$\scriptstyle *$}}
\put(115,25){\makebox(0,0){$\scriptstyle *$}}
\put(125,10){\makebox(0,0){$\scriptstyle \gamma'$}}
\put(125,25){\makebox(0,0){$\scriptstyle =$}}
\put(135,5){\makebox(0,0){$\scriptstyle =$}}
\put(135,20){\makebox(0,0){$\scriptstyle \gamma$}}
\put(155,10){\makebox(0,0){$\scriptstyle \alpha''$}}
\put(155,20){\makebox(0,0){$\scriptstyle *$}}
\put(165,15){\makebox(0,0){$\scriptstyle \gamma' \bdot \gamma$}}

\put(175,5){.}
 \end{picture}
 \end{center}

\noindent So horizontal and vertical composition of cells are again cells.

Identities pose no problem.

\end{proof}

\begin{proposition}
A cell
$$
\bfig\scalefactor{.8}
\square/>`@{>}|{\bb}`@{>}|{\bb}`>/[A`B`C`D;f`v`w`g]

\place(250,250)[{\scriptstyle \alpha}]

\efig
$$
in $ {\mathbb A} $ is a commuter cell iff $ \alpha \colon v \to w $
has a companion in $ {\mathbb V}{\rm ar} {\mathbb A} $.

\end{proposition}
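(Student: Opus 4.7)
The plan is to show that a companion of $\alpha$ in $\mathbb{V}\mathrm{ar}\,\mathbb{A}$ is essentially the same data as a horizontal inverse for $\widehat{\alpha}$ in $\mathcal{V}\mathit{ert}\,\mathbb{A}$, and then invoke the definition of commuter cell. First, I would determine what a companion $\alpha_*$ must look like. As a vertical arrow in $\mathbb{V}\mathrm{ar}\,\mathbb{A}$, it is by definition a retrocell of $\mathbb{A}$ with left edge $v$ and right edge $w$. The upper binding cell in $\mathbb{V}\mathrm{ar}\,\mathbb{A}$ has, as its four edges, the horizontal arrow $1_v$ on top, $\alpha$ on the bottom, $\id_v$ on the left, and $\alpha_*$ on the right; the conditions $f'a = bf$ and $g'c = dg$ from the definition of cells in $\mathbb{V}\mathrm{ar}\,\mathbb{A}$ force the horizontal boundary of $\alpha_*$ to agree with that of $\alpha$, namely $f$ on top and $g$ on bottom. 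The same analysis applied to the lower binding cell gives no new information.

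Next, I would use the crucial fact that $\mathbb{V}\mathrm{ar}\,\mathbb{A}$ is a thin double category: each cell is unique when it exists, so the binding \emph{equations} are automatic, and everything reduces to the question of whether the two binding cells \emph{exist}. Their existence is governed by the starred commutativity condition in the definition of cells in $\mathbb{V}\mathrm{ar}\,\mathbb{A}$. I would then unpack this condition for the upper binding cell: the left vertical edge $\id_v$ and the top horizontal edge $1_v$ both contribute only identities (under the canonical sliding), so the left side of the commutativity condition collapses to a trivial pasting, while the right side is exactly the composite in $\mathcal{V}\mathit{ert}\,\mathbb{A}$ of the globular cell $\widehat{\alpha}\colon g_*\bdot v \to w\bdot f_*$ with $\alpha_*$ viewed as a globular cell $w\bdot f_* \to g_*\bdot v$. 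Thus the existence of the upper binding cell is precisely the equation $\alpha_*\circ \widehat{\alpha} = 1_{g_*\bdot v}$; the dual unpacking for the lower binding cell yields $\widehat{\alpha}\circ \alpha_* = 1_{w\bdot f_*}$.

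Together these two equations say that $\alpha_*$, read in the vertical bicategory, is a two-sided inverse to $\widehat{\alpha}$. So a companion $\alpha_*$ for $\alpha$ in $\mathbb{V}\mathrm{ar}\,\mathbb{A}$ exists if and only if $\widehat{\alpha}$ is a horizontal isomorphism, which by definition is exactly the statement that $\alpha$ is a commuter cell. The main obstacle is the middle step: carrying out the translation of the starred commutativity diagrams — with their sliding adjustments and canonical companion iso\-morphisms for $\id_v$, $1_v$, $f_*$ and $g_*$ — cleanly into the inverse equations $\alpha_*\circ\widehat{\alpha}=1$ and $\widehat{\alpha}\circ\alpha_*=1$. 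This is essentially a coherence calculation: one reduces the pasting diagrams using the binding equations for $\psi_f,\chi_f,\psi_g,\chi_g$ until only $\widehat{\alpha}$ and $\alpha_*$ remain on each side.
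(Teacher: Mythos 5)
Your proposal is correct and follows essentially the same route as the paper: unpack the companion data in $\mathbb{V}{\rm ar}\,\mathbb{A}$ as a retrocell $\beta$ with the same horizontal boundary as $\alpha$, and translate the two binding cells (via sliding) into the equations $\widehat{\alpha}\beta = 1_{w\bdot f_*}$ and $\beta\widehat{\alpha} = 1_{g_*\bdot v}$, i.e.\ into $\beta$ being a two-sided inverse of $\widehat{\alpha}$. Your explicit appeal to thinness of $\mathbb{V}{\rm ar}\,\mathbb{A}$ to dispense with the binding equations is a point the paper leaves implicit, but it is not a different argument.
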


\begin{proof}
A companion $ \beta \colon v \tod w $ for $ \alpha $ will
have cells
$$
\bfig\scalefactor{.8}
\square/>`@{>}|{\bb}`@{>}|{\bb}`=/[v`w`w`w;\alpha`\beta`\id_w`]

\place(250,250)[{\scriptstyle !}]

\place(850,250)[\mbox{and}]

\square(1200,0)/=`@{>}|{\bb}`@{>}|{\bb}`>/[v`v`v`w;`\id_v`\beta`\alpha]

\place(1450,250)[{\scriptstyle !}]

\efig
$$
i.e. it is a retrocell
$$
\bfig\scalefactor{.8}
\square/>`@{>}|{\bb}`@{>}|{\bb}`>/[A`B`C`D;f'`v`w`g']

\morphism(340,250)/=>/<-200,0>[`;\beta]

\efig
$$
making the following cubes ``commute''

$$
\bfig\scalefactor{.9}
\node a(300,0)[D]
\node b(800,0)[D]
\node c(0,250)[B]
\node d(300,500)[C]
\node e(800,500)[D]
\node f(0,800)[A]
\node g(500,800)[B]

\arrow|b|/=/[a`b;]
\arrow|l|/@{>}|{\bb}/[c`a;w]
\arrow|r|/>/[d`a;g']
\arrow|r|/=/[e`b;]
\arrow|b|/>/[d`e;g]
\arrow|l|/>/[f`c;f']
\arrow|l|/@{>}|{\bb}/[f`d;v]
\arrow|r|/@{>}|{\bb}/[g`e;w]
\arrow|a|/>/[f`g;f]

\place(430,650)[{\scriptstyle \alpha}]

\morphism(150,300)|l|/=>/<0,200>[`;\beta]

\node f'(1500,800)[A]
\node g'(2000,800)[B]
\node c'(1500,300)[B]
\node d'(2000,300)[B]
\node a'(1800,0)[D]
\node b'(2300,0)[D]
\node e'(2300,400)[D]

\arrow|a|/>/[f'`g';f]
\arrow|l|/>/[f'`c';f']
\arrow|l|/=/[g'`d';]
\arrow|r|/@{>}|{\bb}/[g'`e';w]
\arrow|a|/=/[c'`d';]
\arrow|l|/@{>}|{\bb}/[c'`a';w]
\arrow|b|/=/[a'`b';]
\arrow/@{>}|{\bb}/[d'`b';w]
\arrow|r|/=/[e'`b';]

\place(1950,150)[{\scriptstyle 1_w}]

\morphism(2130,320)|r|/=>/<0,200>[`;]
\place(2215,335)[{\scriptstyle 1_w}]

\place(1150,500)[\mbox{``=''}]

\efig
$$
and
$$
\bfig\scalefactor{.9}
\node a(300,0)[C]
\node b(800,0)[D]
\node c(0,250)[A]
\node d(300,500)[C]
\node e(800,500)[C]
\node f(0,800)[A]
\node g(500,800)[A]

\arrow|b|/>/[a`b;g]
\arrow|l|/@{>}|{\bb}/[c`a;v]
\arrow|r|/=/[d`a;]
\arrow|r|/>/[e`b;g']
\arrow|b|/=/[d`e;]
\arrow|l|/=/[f`c;]
\arrow|l|/@{>}|{\bb}/[f`d;v]
\arrow|r|/@{>}|{\bb}/[g`e;v]
\arrow|a|/=/[f`g;]

\place(430,650)[{\scriptstyle 1_v}]

\morphism(160,320)|l|/=>/<0,200>[`;1_v]

\node f'(1500,800)[A]
\node g'(2000,800)[A]
\node c'(1500,300)[A]
\node d'(2000,300)[B]
\node a'(1800,0)[C]
\node b'(2300,0)[D]
\node e'(2300,400)[C]

\arrow|a|/=/[f'`g';]
\arrow|l|/=/[f'`c';]
\arrow|l|/>/[g'`d';f']
\arrow|r|/@{>}|{\bb}/[g'`e';v]
\arrow|a|/>/[c'`d';f]
\arrow|l|/@{>}|{\bb}/[c'`a';v]
\arrow|b|/>/[a'`b';g]
\arrow/@{>}|{\bb}/[d'`b';w]
\arrow|r|/>/[e'`b';g']

\place(1950,150)[{\scriptstyle \alpha}]

\morphism(2150,320)|l|/=>/<0,200>[`;\beta]

\place(1150,500)[\mbox{``=''}]

\efig
$$

\noindent So, first of all $ f = f' $ and $ g = g' $. The first
``equation'' says
$$
\bfig\scalefactor{.8}
\square/`@{>}|{\bb}`@{>}|{\bb}`=/[B`C`D`D;`w`g_*`]
\square(0,500)/=`@{>}|{\bb}`@{>}|{\bb}`/[A`A`B`C;`f_*`v`]

\place(250,500)[{\scriptstyle \beta}]

\square(500,0)/>``=`=/[C`D`D`D;g```]

\place(750,250)[{ \lrcorner}]

\square(500,500)/>``@{>}|{\bb}`/[A`B`C`D;f``w`]

\place(750,750)[{\scriptstyle \alpha}]

\place(1350,500)[=]

\square(1700,0)/=`@{>}|{\bb}`@{>}|{\bb}`=/[B`B`D`D;`w`w`]

\place(1950,250)[{\scriptstyle 1_w}]

\square(1700,500)/>`@{>}|{\bb}`=`/[A`B`B`B;f`f_*``]

\place(1950,750)[{\scriptstyle \lrcorner}]

\square(2200,0)/=```=/<500,1000>[B`B`D`D;```]

\morphism(2700,1000)|r|/@{>}|{\bb}/<0,-500>[B`D;w]

\morphism(2700,500)/=/<0,-500>[D`D;]

\place(2450,500)[{\scriptstyle \cong}]

\efig
$$
which by sliding is equivalent to $ \widehat{\alpha} \beta = 1_{w \bdot f_*} $.
Similarly the second equation says $ \beta \widehat{\alpha} = 1_{g_* \bdot v} $.

\end{proof}

We end by acknowledging the ``triple category in the room''. The
cubes we have been discussing are clearly the triple cells of a triple
category $ {\mathfrak{Ret}} {\mathbb A} $. We orient the cubes to
be in line with our intercategories conventions of \cite{GraPar15}
where the faces of the cubes are horizontal, vertical (left and right),
and basic (front and back) in decreasing order of strictness (or
fancyness). The order here will be commutative, cell, and retrocell.

\begin{itemize}

	\item[1.] Objects are the objects of $ {\mathbb A} $, ($ A, A', B, ..$)
	
	\item[2.] Transversal arrows are the horizontal arrows of $ {\mathbb A} $,
	($ f, f', g, g' $)
	
	\item[3.] Horizontal arrows are the horizontal arrows of $ {\mathbb A} $,
	($ a, b, c, d $)
	
	\item[4.] Vertical arrows are the vertical arrows of $ {\mathbb A} $,
	($ v, v', w, w' $)
	
	\item[5.] Horizontal cells are commutative squares of horizontal arrows
	
	\item[6.] Vertical cells are double cells in $ {\mathbb A} $, ($ \alpha, \alpha' $)
	
	\item[7.] Basic cells are retrocells in $ {\mathbb A} $, ($ \beta, \gamma $)
	
	\item[8.] Triple cells are ``commutative'' cubes as discussed above

\end{itemize}

$$
\bfig\scalefactor{.9}
\node a(300,0)[D]
\node b(800,0)[D']
\node c(0,250)[C]
\node d(300,500)[B]
\node e(800,500)[B']
\node f(0,800)[A]
\node g(500,800)[A']

\arrow|b|/>/[a`b;d]
\arrow|l|/>/[c`a;g]
\arrow|r|/@{>}|{\bb}/[d`a;w]
\arrow|r|/@{>}|{\bb}/[e`b;w']
\arrow|b|/>/[d`e;b]
\arrow|l|/@{>}|{\bb}/[f`c;v]
\arrow|l|/>/[f`d;f]
\arrow|r|/>/[g`e;f']
\arrow|a|/>/[f`g;a]

\place(150,400)[{\scriptstyle \alpha}]

\morphism(650,250)|l|/=>/<-200,0>[`;\gamma]

\node f'(1500,800)[A]
\node g'(2000,800)[A']
\node c'(1500,300)[C]
\node d'(2000,300)[C']
\node a'(1800,0)[D]
\node b'(2300,0)[D'\rlap{\ .}]
\node e'(2300,400)[B']

\arrow|a|/>/[f'`g';a]
\arrow|l|/@{>}|{\bb}/[f'`c';v]
\arrow|l|/@{>}|{\bb}/[g'`d';v']
\arrow|r|/>/[g'`e';f']
\arrow|a|/>/[c'`d';c]
\arrow|l|/>/[c'`a';g]
\arrow|b|/>/[a'`b';d]
\arrow/>/[d'`b';g']
\arrow|r|/@{>}|{\bb}/[e'`b';w']

\morphism(1850,550)/=>/<-200,0>[`;\beta]

\place(2150,400)[{\scriptstyle \alpha'}]

\efig
$$

We leave the details to the interested reader.

\bibliography{Pare-references,TAC-references}


\end{document}